\newtheorem{theorem}{Theorem}[section]
\newtheorem{definition}[theorem]{Definition}
\newtheorem{lemma}[theorem]{Lemma}
\newtheorem{observation}[theorem]{Observation}
\newtheorem{conjecture}[theorem]{Conjecture}
\newtheorem{claim}{Claim}
\newcommand{\ex}{{\rm ex}}
\newcommand{\ext}{{\rm ext}}
\begin{document}
\textwidth 150mm \textheight 225mm

\title{On the multicolor Tur\'{a}n conjecture for color-critical graphs
}
\author{
Xihe Li\footnote{School of Mathematics and Statistics, Shaanxi Normal University, Xi'an, Shaanxi 710119, China.}~~~~~~
Jie Ma\footnote{School of Mathematical Sciences, University of Science and Technology of China, Hefei, Anhui 230026, China.}~\footnote{Yau Mathematical Sciences Center, Tsinghua University, Beijing 100084, China.}~~~~~~
Zhiheng Zheng\footnotemark[2]
}
\date{}
\maketitle
\begin{center}
\begin{minipage}{120mm}
\vskip 0.3cm
\begin{center}
{\small {\bf Abstract}}
\end{center}
{\small A {\it simple $k$-coloring} of a multigraph $G$ is a decomposition of the edge multiset as a disjoint sum of $k$ simple graphs which are referred to as colors.
A subgraph $H$ of a multigraph $G$ is called {\it multicolored} if its edges receive distinct colors in a given simple $k$-coloring of $G$.
In 2004, Keevash-Saks-Sudakov-Verstra\"{e}te introduced the {\it $k$-color Tur\'{a}n number} $\ex_k(n,H)$,
which denotes the maximum number of edges in an $n$-vertex multigraph that has a simple $k$-coloring containing no multicolored copies of $H$.
They made a conjecture for any $r\geq 3$ and $r$-color-critical graph $H$ that in the range of $k\geq \frac{r-1}{r-2}(e(H)-1)$, if $n$ is sufficiently large, then $\ex_k(n, H)$ is achieved by the multigraph consisting of $k$ colors all of which are identical copies of the Tur\'{a}n graph $T_{r-1}(n)$.
In this paper, we show that this holds in the range of $k\geq 2\frac{r-1}{r}(e(H)-1)$, significantly improving earlier results.
Our proof combines the stability argument of Chakraborti-Kim-Lee-Liu-Seo with a novel graph packing technique for embedding multigraphs.
\vskip 0.1in \noindent {\bf AMS Subject Classification (2020)}: \ 05C15, 05C35
}
\end{minipage}
\end{center}

\section{Introduction}
\label{sec:introduction}

One of the central topics in extremal graph theory is the Tur\'{a}n type problem which asks, for a fixed graph $H$, what is the maximum number of edges in an $H$-free\footnote{Given a graph $H$, we say that a graph is $H$-free if it contains no subgraph isomorphic to $H$.} graph on $n$ vertices.
Such a maximum number is called the {\it Tur\'{a}n number} (or {\it extremal number}) of $H$ and is denoted by $\ex(n,H)$.
The well-known Mantel's theorem \cite{Man907} determines the Tur\'{a}n number for triangles, and Tur\'{a}n's theorem \cite{Tur} generalizes the result from triangles to general cliques.
Let $T_{r-1}(n)$ be the $(r-1)$-partite {\it Tur\'{a}n graph}, that is, the complete $(r-1)$-partite graph on $n$ vertices with part sizes as equal as possible.
Let $t_{r-1}(n)$ be the number of edges of $T_{r-1}(n)$.
Tur\'{a}n's theorem says that $\ex(n, K_r)=t_{r-1}(n)$ which is uniquely attained by $T_{r-1}(n)$.
For a general graph $H$, the celebrated Erd\H{o}s-Stone-Simonovits theorem \cite{ErSi966,ErSt} shows that $\ex(n, H)=\big(1-\frac{1}{\chi(H)-1}+o(1)\big)\frac{n^2}{2}$, where $\chi(H)$ is the chromatic number of $H$.
A graph is called {\it $r$-color-critical} if it has chromatic number $r$, and it has an edge (called a {\it critical edge}) whose removal reduces the chromatic number to $r-1$.
For any $r$-color-critical graph $H$ and sufficiently large $n$, Simonovits \cite{Sim968} proved that $\ex(n, H)=t_{r-1}(n)$ and $T_{r-1}(n)$ is the unique extremal graph.

In the last two decades, the study of extremal problems for multicolor versions became an active research topic.
Let $\mathcal{G}=\{G_1, \ldots, G_k\}$ be a collection of not necessarily distinct graphs on the same vertex set $V$.
A graph $H$ with $V(H)\subseteq V$ is called a {\it multicolored} (or {\it rainbow}) subgraph of $\mathcal{G}$ if there exists an injection $\varphi: E(H)\to [k]$ such that $e\in E(G_{\varphi(e)})$ for each $e\in E(H)$.
The following general question has been extensively studied.

\begin{center}
\begin{minipage}{126mm}
{\it Let $\mathcal{G}=\{G_1, \ldots, G_k\}$ be a collection of graphs on the same vertex set $V$, and $H$ be a graph with $e(H)\leq k$.
Which extremal conditions, when imposed on $\mathcal{G}$, can lead to the existence of a multicolored copy of $H$?}
\end{minipage}
\end{center}
Roughly speaking, two distinct types of extremal conditions have been studied:
\begin{itemize}
\item minimum-type: $\min_{i\in [k]}e(G_i)$ \cite{ADGSMS,BaGr,BaGP,IKLS,KiKu} and $\min_{i\in [k]}\delta(G_i)$ \cite{CHWW,FaMR24+JGT,FaMR24+Combin,JoKi,LuWY23JCTB,LuYY21JCTA};

\item average-type: $\sum_{i\in [k]}e(G_i)$ \cite{BaGP,CKLLS,Fra22+,GGPP,KSSV,MaHo} and $\prod_{i\in [k]}e(G_i)$ \cite{FaMR24+Combin,Fra22+,HFGLSTVZ}.
\end{itemize}
Note that in the special case when all $G_i$ are identical (say $G$), the existence of a multicolored $H$ is equivalent to the existence of a copy of $H$ in $G$.
Thus the study on $\min_{i\in [k]}e(G_i)$ generalizes the original Tur\'{a}n problem.
On the other hand, if $G_1, \ldots, G_k$ are pairwise edge-disjoint matchings, then $\mathcal{G}$ is equivalent to a properly edge-colored graph.
Thus in this case, determining $\sum_{i\in [k]}e(G_i)$ is equivalent to the {\it rainbow Tur\'{a}n problem} introduced by Keevash, Mubayi, Sudakov and Verstra\"{e}te \cite{KMSV}.

In this paper, we mainly focus on the multicolor Tur\'{a}n problem proposed by Keevash, Saks, Sudakov and Verstra\"{e}te \cite{KSSV} in 2004.
The authors of \cite{KSSV} used a different notion rather than the collection of graphs.
A {\it simple $k$-coloring} of a multigraph $G$ is a decomposition of the edge multiset as a disjoint sum of $k$ simple graphs which are referred to as {\it colors}.
A multigraph with a simple $k$-coloring is called a {\it simply $k$-colored multigraph}.
A subgraph\footnote{Throughout this paper, when we refer to a subgraph of a multigraph, it is implied that the subgraph is also a multigraph.} $H$ of a multigraph $G$ is called {\it multicolored} if its edges receive distinct colors in a given simple $k$-coloring of $G$.
The {\it $k$-color Tur\'{a}n number}, denoted by $\ex_k(n,H)$, is the maximum number of edges in an $n$-vertex multigraph that has a simple $k$-coloring containing no multicolored copies of $H$.
The simply $k$-colored multigraphs without multicolored copies of $H$ that achieve this maximum are called the {\it $k$-color extremal multigraphs} of $H$.

If $k \leq e(H)-1$, there are no multicolored copies of $H$ with $k$ colors.
In this case, the unique $k$-color extremal multigraph is the multigraph consisting of $k$ copies of complete graphs.
For $k\geq e(H)$, there are two natural candidates for the $k$-color extremal multigraphs on $n$ vertices:
\begin{itemize}
\item[(i)] the multigraph consisting of $e(H)-1$ copies of the complete graph $K_n$;
\item[(ii)] the multigraph consisting of $k$ identical copies of a fixed extremal $H$-free graph $G_{\ext}$.
\end{itemize}
We use $(e(H)-1)\cdot K_n$ to denote the first multigraph, and $k \cdot G_{\ext}$ to denote the second multigraph.
Keevash et al. \cite{KSSV} proved that the second construction is always extremal when $k$ is sufficiently large.

\begin{theorem}{\normalfont (\cite[Theorem~1.1]{KSSV})}\label{th:KSSV-large_k}
Let $H$ be a graph and $k, n$ be two positive integers.
If $$k\geq {n \choose 2} -\ex(n,H)+ e(H),$$ then $\ex_k(n, H)=k\cdot \ex(n,H)$, and in any $k$-color extremal multigraph of $H$, all the $k$ colors are identical copies of an extremal $H$-free graph.
\end{theorem}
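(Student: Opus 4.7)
The lower bound $\ex_k(n,H)\geq k\cdot\ex(n,H)$ is witnessed by construction~(ii): the multigraph $k\cdot G_{\ext}$ has $k\cdot\ex(n,H)$ edges, and any potential multicolored copy of $H$ would, in particular, be a copy of $H$ in the underlying simple graph $G_{\ext}$, contradicting its $H$-freeness.

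For the upper bound, let $G=G_1\sqcup\cdots\sqcup G_k$ be any $n$-vertex simply $k$-colored multigraph with no multicolored copy of $H$, write $G^*:=\bigcup_i G_i$ for the underlying simple graph, and set $m(uv):=|\{i:uv\in G_i\}|$ for each pair $uv$, so that $e(G)=\sum_{uv\in E(G^*)}m(uv)$. Put $r:=e(H)$ and $D:=\binom{n}{2}-\ex(n,H)$; the hypothesis reads $k\geq D+r$. My strategy is to show that $G^*$ is itself $H$-free, which yields at once $e(G)=\sum_i e(G_i)\leq k\cdot e(G^*)\leq k\cdot\ex(n,H)$, with equality forcing each $G_i$ to coincide with $G^*$ and $G^*$ to be an extremal $H$-free graph---giving both the edge bound and the uniqueness statement.

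Assume for contradiction that $G^*$ contains a copy $H_0$ of $H$ with edges $f_1,\dots,f_r$, and put $C_j:=\{i:f_j\in G_i\}$. A multicolored $H_0$ corresponds precisely to a system of distinct representatives for $(C_1,\dots,C_r)$; by Hall's theorem its absence produces $S\subseteq[r]$ with $|\bigcup_{j\in S}C_j|\leq|S|-1$. Writing $s:=|S|$, each $f_j$ with $j\in S$ satisfies $m(f_j)\leq s-1\leq r-1$, and at least $k-s+1\geq k-r+1\geq D+1$ colors $G_i$ have complements containing all $s$ edges $\{f_j:j\in S\}$, so $e(G_i)\leq\binom{n}{2}-s$ for each such $i$. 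The plan is then to aggregate these Hall-based deficits across all copies of $H$ in $G^*$ and conclude $\sum_i\bigl(\binom{n}{2}-e(G_i)\bigr)\geq kD$, equivalently $e(G)\leq k\cdot\ex(n,H)$, with strict inequality unless $G^*$ is $H$-free.

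The main obstacle I foresee is exactly this aggregation. A single copy only yields $e(G)\leq k\binom{n}{2}-s(k-s+1)$, which is insufficient once $D\geq r$. My approach is an iterative peeling: successively delete from $G^*$ the Hall-violating edge-sets encountered, and track the accumulated deficit in each color complement, using the hypothesis $k\geq D+r$ at every stage to guarantee a fresh supply of at least $D+1$ deficient colors. The process terminates when the residual graph becomes $H$-free, at which point the accumulated deficit is enough to deliver $\sum_i(\binom{n}{2}-e(G_i))\geq kD$, completing the argument and recovering the uniqueness of the extremal configuration $k\cdot G_{\ext}$ from the equality cases.
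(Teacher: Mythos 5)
Since the paper only cites \cite{KSSV} for this theorem and does not reproduce the proof, I can only evaluate your proposal on its own terms; and there is a genuine gap in the aggregation step which you yourself flag as the main obstacle. The iterative peeling does not close it. Here is the precise difficulty. Write $r=e(H)$, $D=\binom{n}{2}-\ex(n,H)$. Each peeled edge $f$ has $m(f)\le r-1$, hence is absent from at least $k-r+1\ge D+1$ colors, so it contributes at least $D+1$ to the total colour deficit $\sum_i\bigl(\binom{n}{2}-e(G_i)\bigr)=k\binom{n}{2}-e(G)$; each pair outside $G^*$ contributes exactly $k$; the unpeeled edges of $G^*$ contribute at least $0$. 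Thus the peeling yields only
\[
k\binom{n}{2}-e(G)\;\ge\;\beta k+R(D+1),
\]
where $\beta=\binom{n}{2}-e(G^*)$ and $R$ is the number of peeled edges. Your process stops as soon as the residual becomes $H$-free, so the only guaranteed lower bound is $R\ge e(G^*)-\ex(n,H)=:\alpha$ (when $\alpha>0$). Since $\alpha+\beta=D$, the bound evaluates to
\[
\beta k+\alpha(D+1)=Dk-\alpha\bigl(k-D-1\bigr),
\]
which for $\alpha>0$ and $r\ge 2$ is strictly \emph{less} than $Dk$, because $k-D-1\ge r-1\ge 1$. So the peeling establishes a lower bound on the deficit that falls short of the target $kD$, i.e.\ it does not prove $e(G)\le k\cdot\ex(n,H)$ unless one already knows $e(G^*)\le\ex(n,H)$.

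Concretely, the mismatch is that a peeled edge is only guaranteed to be missing from about $D+1$ colours, while an entirely absent pair is missing from all $k$ colours; when $e(G^*)>\ex(n,H)$ the number of absent pairs is correspondingly smaller, and the $D+1$ rate is not enough to compensate. To repair the argument one must show something extra, e.g.\ that the peeling is forced to remove strictly more than $\alpha$ edges (equivalently, that the final $H$-free residual has strictly fewer than $\ex(n,H)$ edges), or that the unpeeled edges of $G^*$, or the colours $G_i$ for $i<r$, contribute additional deficit. Neither follows from a single application of Hall's theorem per copy of $H$; it requires a structural argument about how copies of $H$ with small-multiplicity edges interact with the high-multiplicity ($H$-free) part of $G^*$. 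As written, the claim ``the accumulated deficit is enough to deliver $\sum_i(\binom{n}{2}-e(G_i))\ge kD$'' is asserted but not substantiated, and your own preliminary calculation shows a single copy gives too little, so the burden to close the gap is real and currently unmet.
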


The authors of \cite{KSSV} made the following general conjecture for color-critical graphs.

\begin{conjecture}{\normalfont (\cite[Conjecture~1.3]{KSSV})}\label{conj:KSSV-critical}
Let $r\geq 3$, $k\geq h$, and $H$ be an $r$-color-critical graph with $h$ edges.
Then, for sufficiently large $n$, the $n$-vertex $k$-color extremal multigraph of $H$ either consists of $k$ colors all of which are identical copies of $T_{r-1}(n)$ or consists of exactly $h-1$ colors all of which are copies of $K_n$.
In particular,
$$\ex_k(n, H)=
\left\{
   \begin{aligned}
    &k\cdot t_{r-1}(n) & & \mbox{for $k\geq \frac{r-1}{r-2}(h-1)$},\\
    &\left(h-1\right){n\choose 2} & & \mbox{for $h\leq k< \frac{r-1}{r-2}(h-1)$}.
   \end{aligned}
   \right.$$
\end{conjecture}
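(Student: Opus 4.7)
The plan is to prove, by contradiction, the upper bound $\ex_k(n,H)\leq k\cdot t_{r-1}(n)$ in the range of $k$ advertised in the abstract. The matching lower bound is immediate from the construction $k\cdot T_{r-1}(n)$, which admits no multicolored $H$ since $\chi(H)=r$ forbids $H\subseteq T_{r-1}(n)$. Assume for contradiction that $G$ is a simply $k$-colored $n$-vertex multigraph with $e(G)>k\cdot t_{r-1}(n)$ and no multicolored copy of $H$, and denote its color classes $G_1,\ldots,G_k$. The objective is to locate a copy of $H$ whose $h$ edges lie in $h$ distinct colors.

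The first step adapts the stability argument of Chakraborti-Kim-Lee-Liu-Seo at the level of individual colors: for some small $\delta>0$, each color $G_i$ is either \emph{good}, meaning it is within edit distance $\delta n^2$ of a copy of $T_{r-1}(n)$, or \emph{bad}, meaning $e(G_i)\leq t_{r-1}(n)-\delta n^2$. The hypothesis $e(G)>k\cdot t_{r-1}(n)$ forces the vast majority of colors to be good, with $e(G_i)$ essentially $t_{r-1}(n)$. A voting/symmetrization step then aligns the approximate $(r-1)$-partitions of the good colors into a single common equipartition $V=V_1\cup\cdots\cup V_{r-1}$, so that nearly all edges of every good color cross this partition. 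Now pick a critical edge $e^*=xy$ of $H$; then $H':=H-e^*$ is $(r-1)$-chromatic and embeds into $T_{r-1}(n)$ with both $x$ and $y$ lying in the same part $V_j$. Since each good color contributes at most $t_{r-1}(n)$ crossing edges, the surplus $e(G)-k\cdot t_{r-1}(n)>0$ must arise from non-crossing edges, and by averaging there is a pair $uv$ inside some $V_j$ together with a set $\mathcal{C}$ of at least $h$ colors each containing $uv$.

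Reserving one color $c_0\in\mathcal{C}$ for the critical edge $e^*$ placed on $uv$, it suffices to rainbow-embed $H'$ into $V\setminus\{u,v\}$ using $h-1$ colors distinct from $c_0$, consistently with the placements $x\mapsto u$, $y\mapsto v$. This is the heart of the argument and is where I expect the main obstacle to lie. One formulates the required embedding as a hypergraph matching problem whose hyperedges encode (edge-of-$H'$, color) assignments compatible with a partial embedding, and one executes a greedy or absorbing-style packing color by color. The numerical bound $k\geq 2\frac{r-1}{r}(h-1)$ enters precisely at this step, providing the slack needed for a valid $(h-1)$-fold color assignment even after discarding the $\delta n^2$ rogue edges of each good color. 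The central technical difficulty, tackled by the novel multigraph packing technique, is to control adversarial correlations between the rogue edge sets of different colors, so that they cannot simultaneously block every rainbow embedding of $H'$; once such a packing lemma is in place, the resulting rainbow $H'$ together with the reserved $e^*$ in color $c_0$ yields the forbidden multicolored $H$, a contradiction.
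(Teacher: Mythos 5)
Your proof proposal targets the main theorem (the upper range of the conjecture with the relaxed bound $k\geq 2\frac{r-1}{r}(h-1)$) rather than the full conjecture, which is the right scope, but the first step contains a fatal gap. You assert a dichotomy per color: each $G_i$ is either within $\delta n^2$ of $T_{r-1}(n)$ or has $e(G_i)\leq t_{r-1}(n)-\delta n^2$. This is an application of Simonovits stability to each $G_i$ individually, but the individual color classes $G_i$ are \emph{not} $H$-free --- the hypothesis only forbids multicolored copies of $H$ in the multigraph $G$. A single color can be arbitrarily dense (even $K_n$, as in the lower-range extremal construction) without contradiction, and then it is neither ``good'' nor ``bad'' in your sense. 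Consequently the alignment of partitions of the ``good'' colors and the claim that the surplus $e(G)-k\cdot t_{r-1}(n)$ is concentrated on non-crossing edges both collapse, since there need not be any common $(r-1)$-partition emerging from the colors. The correct stability statement (Lemmas~\ref{le:r_vertex-stability} and~\ref{le:any_vertex-stability} in the paper) is at the level of the whole multigraph $G$, asserting $|G\triangle(k\cdot T_{r-1}(n))|$ is small, and its proof routes through the reduced multigraph from the multicolor regularity lemma, not through color-by-color stability.

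The paper's actual route is also quite different in the embedding step. Rather than a global rainbow embedding of $H-e^{*}$ into an $n$-vertex near-extremal graph (a ``robust rainbow embedding'' which, as you note, would require controlling correlations among the exceptional edge sets of all $k$ colors --- a genuinely hard problem), the paper localizes the whole difficulty to an $r$-vertex multigraph. Using Lemma~\ref{le:vertices} it extracts vertices $v_1,\dots,v_{r-1}$ with high pairwise multiplicities, forms $F^+$ on $r$ vertices, and must show $F^+$ contains a multicolored $H$; the graph packing technique (via Theorems~\ref{th:packing-prod} and~\ref{th:packing-sum}) is applied to pack a small forest $T$ (or $T^{++}$) of $O(r)$ edges against an $O(r)$-edge auxiliary graph $H'$ built from the heavy edges of $H$. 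Once $F$ is shown to be $H$-friendly, Lemma~\ref{le:partite} (from \cite{CKLLS}) converts this into $(r-1)$-partiteness of $G$. Your proposal's high-level intuition (reserve a color for the critical edge, embed the rest) is sound, but the mechanism by which the quantitative bound on $k$ enters --- and the object to which packing is applied --- is misplaced; as written the argument does not go through.
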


We should refer to the range $k\geq \frac{r-1}{r-2}(h-1)$ as the {\it upper range} and the range $h\leq k< \frac{r-1}{r-2}(h-1)$ as the {\it lower range}.
The authors of \cite{KSSV} confirmed this for all cliques $K_r$ when $n>10^{4}r^{34}$.
Recently, Frankl \cite{Fra22+} and Ma and Hou \cite{MaHo} proved that for all $n\geq r-1$, $\ex_k(n, K_r)\leq \max\big\{k\cdot t_{r-1}(n), \left({r\choose 2}-1\right){n\choose 2}\big\}$ holds in the cases $r=3$ and $r=4,5$, respectively.
Conjecture~\ref{conj:KSSV-critical} was confirmed in full for $r=3$ by Keevash et al. \cite{KSSV} and for $r=4$ by Chakraborti, Kim, Lee, Liu and Seo \cite{CKLLS}.
For $r\geq 5$, Chakraborti et al. \cite{CKLLS} proved that Conjecture~\ref{conj:KSSV-critical} holds for a rich family of $r$-color-critical graphs whose edges are distributed somewhat evenly.
Note that Theorem~\ref{th:KSSV-large_k} implies that Conjecture~\ref{conj:KSSV-critical} holds for $k\geq (1+o(1))\frac{1}{r-1}{n\choose 2}$.

In this paper, we show that Conjecture~\ref{conj:KSSV-critical} holds in the range of $k\geq 2\frac{r-1}{r}(h-1)$.

\begin{theorem}\label{th:main}
Let $r\geq 5$ and $H$ be an $r$-color-critical graph with $h$ edges.
If $n$ is sufficiently large and $$k\geq 2\frac{r-1}{r}(h-1),$$ then $\ex_k(n, H)=k\cdot t_{r-1}(n)$, and the unique $n$-vertex $k$-color extremal multigraph of $H$ consists of $k$ colors all of which are identical copies of $T_{r-1}(n)$.
\end{theorem}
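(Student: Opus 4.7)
The plan is to argue by contradiction: suppose $G$ is a simply $k$-colored multigraph on $n$ vertices with $e(G)\geq k\cdot t_{r-1}(n)$ and no multicolored copy of $H$, and aim to show $G$ must consist of $k$ identical copies of $T_{r-1}(n)$.

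First, I would apply the stability machinery of Chakraborti--Kim--Lee--Liu--Seo color by color. Each color class $G_i$ either contains a copy of $H$ (to be exploited in the packing step) or is $H$-free; in the latter case Simonovits' stability theorem gives that either $e(G_i)\leq t_{r-1}(n)-\delta n^2$ or $G_i$ is $o(n^2)$-close to a copy of $T_{r-1}(n)$. The edge-count hypothesis $e(G)\geq k\cdot t_{r-1}(n)$ forces only $O(1)$ colors to have the first kind of deficit. A partition-synchronization argument---a mismatch between the Tur\'an partitions of two nearly-extremal colors would itself assemble a multicolored $H$---pins down a single common partition $\mathcal{P}=(V_1,\ldots,V_{r-1})$ for almost every color.

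Next, if every color equals $T_{r-1}(n)$ on $\mathcal{P}$ exactly, then $e(G)=k\cdot t_{r-1}(n)$ and we are done. Otherwise some color $G_{i_0}$ carries a ``bad'' edge $e^*$ that either lies inside a part $V_j$ or lies across parts but is absent from many other colors. Since $T_{r-1}(n)+e^*$ already contains every $r$-color-critical graph once $n$ is large, one can embed $H$ into (essentially) $G_{i_0}$ so that $e^*$ plays the role of the critical edge while the remaining $h-1$ edges $f_1,\ldots,f_{h-1}$ lie properly between parts of $\mathcal{P}$.

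The heart of the proof is then the novel packing step, which converts this nearly-monochromatic copy into a multicolored $H$. Assign color $i_0$ to $e^*$ and recast the task of picking distinct colors in $[k]\setminus\{i_0\}$ for $f_1,\ldots,f_{h-1}$ (with $f_j\in G_{c_j}$) as a system of distinct representatives in the bipartite ``availability'' graph. Stability forces most colors to contain most Tur\'an-type edges, so every $f_j$ has availability degree $k-o(k)$. The quantitative threshold $k\geq 2\tfrac{r-1}{r}(h-1)$ emerges from a careful double-counting that balances the missing Tur\'an edges (deficits) across colors against the excess non-Tur\'an edges (surpluses): the factor $\tfrac{r-1}{r}$ reflects the density of non-Tur\'an edges in an extremal configuration, and the factor $2$ absorbs both sides of the ledger simultaneously. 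Crucially, the argument exploits the freedom to re-embed $H$---rotating which edge plays the role of $e^*$ and re-selecting the $f_j$'s---to resolve local obstructions to Hall's condition.

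I expect the main obstacle to be exactly this packing/SDR step. A naive application of Hall's theorem only handles $k\gtrsim h$, so pushing the threshold down to $k\approx 2(r-1)(h-1)/r$ requires simultaneous control over multiple embeddings of $H$, the joint deficit/surplus structure across all colors, and the freedom in locating $e^*$. Orchestrating this trade-off uniformly for every $r$-color-critical $H$---rather than the ``evenly distributed'' subfamily treated in~\cite{CKLLS}---is the genuinely new ingredient, and is what the stated range of $k$ is calibrated to accommodate.
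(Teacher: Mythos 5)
Your proposal takes a genuinely different route from the paper, but it has substantive gaps that I do not see how to close.

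The color-by-color stability move runs into trouble immediately: a single color class $G_i$ need not be $H$-free, since a monochromatic copy of $H$ is harmless. You note this but never explain what to do with such a color, and Simonovits stability gives no information about it. The claim that a mismatch between the Tur\'an partitions of two nearly-extremal colors would by itself ``assemble a multicolored $H$'' is unsupported: two colors alone supply at most two of the $h$ distinct colors needed, so the mismatch argument already presupposes the full coloring machinery you are trying to build. You also omit any reduction to $r$-vertex multigraphs; the paper first proves the statement for $r$-vertex $r$-color-critical multigraphs (Theorem~\ref{th:r_vertex}), then lifts it to general $H$ via the multicolor Regularity and Embedding Lemmas (Lemmas~\ref{le:regulartity} and~\ref{le:embedding}) together with the nested-coloring reduction (Lemma~\ref{le:nested_degree}). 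Without some such reduction it is unclear how to control a general $H$ on $|V(H)| \gg r$ vertices.

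The more fundamental issue is that the ``packing step'' you describe is a system-of-distinct-representatives argument on an availability bipartite graph, not graph packing in the Sauer--Spencer sense, and its quantitative content does not match the stated threshold. If every $f_j$ really had availability degree $k-o(k)$, Hall's condition would already hold for $k\geq h+o(h)$, which would prove the full conjecture; so that estimate cannot be what produces the factor $2\frac{r-1}{r}$. In the paper the factor arises elsewhere: Claim~\ref{cl:Kr-s} requires $k>2\frac{r-1}{r}\bigl(h-\frac{r(r-2)}{4}+1\bigr)$ precisely to guarantee a complete multigraph on $r-\lceil r/2\rceil$ vertices whose edges all have multiplicity at least $h-\frac{r(r-2)}{4}+1$, and the multiplicity bounds~\eqref{eq:min_i}--\eqref{eq:maxl_r+} then feed a literal packing of a small forest $T_2'$ against the ``heavy'' edges $H'$ of $H$ inside a complete graph, via Theorems~\ref{th:packing-prod} and~\ref{th:packing-sum}, before Lemma~\ref{le:Hall} finishes the job. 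Your ``double-counting that balances deficits against surpluses'' is not a computation, and the mechanism you sketch does not point to where the specific constant $2\frac{r-1}{r}(h-1)$ would come from.
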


In fact, we prove a slightly stronger result (see Theorem~\ref{th:main+}), where the same statement holds under a weaker condition that
$$k> \max\left\{\frac{r-1}{r-2}(h-1), 2\frac{r-1}{r}\left(h-\frac{r(r-2)}{4}+1\right)\right\}.$$
This improves Theorem~\ref{th:KSSV-large_k} from $k\geq (1+o(1))\frac{1}{r-1}{n\choose 2}$ to $k=\Omega(h)$ for all $r$-color-critical graphs.
Our proof combines the stability argument of Chakraborti, Kim, Lee, Liu and Seo \cite{CKLLS} with a novel graph packing technique for embedding multigraphs.
For an illustration of the proof sketch, we refer readers to Section~\ref{sec:proof_sketch}.

The remainder of this paper is organized as follows.
In the next section, we introduce some additional terminology and notation, and state some existing results that will be used in our proofs.
In Section~\ref{sec:proof_sketch}, we supply a proof sketch of our main result.
In Section~\ref{sec:r_vertex}, we show the first step of our proof, that is, establishing an analogous statement for the so-called $r$-vertex $r$-color-critical multigraphs.
In Section~\ref{sec:any_vertex}, we complete our proof of Theorem~\ref{th:main}.
Finally, we conclude the paper with some remarks and open problems in Section~\ref{sec:ch-conclu};
in particular, we prove a tight $k$-colored Tur\'an type result for general graphs with $h$ edges and chromatic number $r$ when $h\leq k\leq h+\big\lfloor\frac{r}{2}\big\rfloor-1$.

\section{Preliminaries}
\label{sec:pre}

In this section, we introduce some additional terminology and notation, as well as several lemmas that will be used in our proof of the main result.

\subsection{Terminology and some technical inequalities}
\label{subsec:def}

Throughout this paper, we consider both simple graphs and multigraphs (a multigraph refers to a multigraph with no loops).
We will also view a simple graph as a multigraph in which every edge has multiplicity 1.

Let $G$ be a multigraph with vertex set $V(G)$ and edge multiset $E(G)$.
Let ${V(G)\choose 2}\colonequals \left\{\{u,v\}\colon\, u,v\in V(G), u\neq v\right\}$.
For any $\{u,v\}\in {V(G)\choose 2}$, we also write it as $uv$ or $vu$.
We shall call an element $e$ in ${V(G)\choose 2}$ an edge, although the multiplicity of $e$ could be 0 in $G$ (i.e., $e$ is in fact not an actual edge of $G$).
For $e\in {V(G)\choose 2}$, the multiplicity of $e$ in $G$ is written as $w_G(e)$.
For a subset $E\subseteq {V(G)\choose 2}$, let $w_G(E)\colonequals \sum_{e\in E}w_G(e)$.
For any $v\in V(G)$ and $U\subseteq V(G)$, let $d_U(v)\colonequals \sum_{u\in U}w_G(vu)$.
The degree $d_G(v)$ of a vertex $v$ in $G$ is the number of edges (counted with multiplicity) incident with $v$, i.e., $d_G(v)=\sum_{u\in V(G)}w_G(vu)$.
Let $\delta(G)\colonequals \min\left\{d_G(v)\colon\, v\in V(G)\right\}$ be the minimum degree of $G$.
We use $e(G)$ to denote the number of edges (counted with multiplicity) of $G$, i.e., $e(G)=|E(G)|=\sum_{e\in {V(G)\choose 2}}w_G(e)$.

Given two disjoint vertex sets $U, V\subseteq V(G)$, let $E(U,V)$ be the set of edges between $U$ and $V$ in $G$, and let $e(U, V)\colonequals |E(U,V)|$.
For a vertex subset $U\subseteq V(G)$, the subgraph of $G$ {\it induced by $U$}, denoted by $G[U]$, is the subgraph of $G$ with vertex set $U$ and edge multiset $\{e\in E(G)\colon\, \mbox{both end-vertices of $e$ are contained in $U$}\}$.
Moreover, let $G-U\colonequals G[V(G)\setminus U]$.
For an edge sub-multiset $E\subseteq E(G)$, let $G[E]$ be the {\it edge-induced} subgraph of $G$ with vertex set $\{v\in V(G)\colon\, \mbox{$v$ is incident with some $e\in E$}\}$ and edge multiset $E$.

For a positive integer $n$, let $[n]\colonequals \{1, 2, \ldots, n\}$.
For a set $S$ of real numbers and $1\leq \ell \leq |S|$, let $\min_{\ell} S$ (resp., $\max_{\ell} S$) be the $\ell$-th smallest (resp., largest) number in $S$.
For the sake of clarity of presentation, we systematically omit floor and ceiling signs whenever they are not crucial.
When we say a result holds for $0\leq a\ll b\leq 1$, we mean that there exists a non-decreasing function $f$ such that the result holds whenever $a\leq f(b)$.
Similarly, when we say a result holds for $0\leq a\ll b,c\ll d\leq 1$, we mean that there exist non-decreasing functions $f_1$ and $f_2$ such that the result holds whenever $a\leq f_1(b,c)$ and $b,c\leq f_2(d)$.
Such hierarchies with more constants are defined in a similar way and are to be read from the right to the left.
We will be using the following standard estimates on Tur\'{a}n graphs (for $n\geq r\geq 2$):
\begin{equation}\label{eq:t_r-1}
\frac{r-2}{r-1}{n\choose 2} \leq \ex(n, K_r)=t_{r-1}(n) \leq \frac{r-2}{r-1}\cdot \frac{n^2}{2}= \frac{r-2}{r-1}{n\choose 2}+\frac{r-2}{2(r-1)}n
\end{equation}
and
\begin{equation}\label{eq:degree_T_r-1}
\frac{r-2}{r-1}(n-1)\leq \delta(T_{r-1}(n))\leq \frac{r-2}{r-1}n.
\end{equation}

We shall also use the following two technical lemmas, and we postpone the proofs to Appendix~\ref{ap:1}.

\begin{lemma}\label{le:calculation-1}
Let $r\geq 5$, $s=\left\lceil\frac{r}{2}\right\rceil$ and $h\geq {r\choose 2}$.
If $\ell$ and $i$ are two integers satisfying one of the following statements:
\begin{itemize}
\item[{\rm (i)}] $2\leq \ell \leq s-2$ and $r-s+1 \leq i\leq r-1$, or
\item[{\rm (ii)}] $\ell = s-1$ and $r-s+2 \leq i\leq r-1$,
\end{itemize}
then $\frac{2}{r}\big(r-2-\frac{\ell-1}{i-\ell}\big)\big(h-\frac{r(r-2)}{4}+1\big)\geq h-((\ell-1)s+r-i).$
\end{lemma}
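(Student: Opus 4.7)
The inequality is linear in $h$. Writing $\alpha := \frac{2}{r}\bigl(r-2-\frac{\ell-1}{i-\ell}\bigr)$ for the coefficient of $h$ on the LHS, the inequality becomes $(\alpha-1)h \geq \alpha\bigl(\tfrac{r(r-2)}{4}-1\bigr) - \bigl((\ell-1)s + r-i\bigr)$. My strategy is first to show $\alpha \geq 1$, so that the quantity LHS $-$ RHS is non-decreasing in $h$, and then to verify the inequality at the smallest permitted value $h = \binom{r}{2} = \tfrac{r(r-1)}{2}$.

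For $\alpha \geq 1$, equivalently $\frac{\ell-1}{i-\ell} \leq \frac{r-4}{2}$, I note that in either case (i) or (ii) one has $\ell-1 \leq s-2$ and $i-\ell \geq r-2s+3$, so $\frac{\ell-1}{i-\ell} \leq \frac{s-2}{r-2s+3}$. Splitting on the parity of $r$ (so $s = r/2$ when $r$ is even and $s = (r+1)/2$ when $r$ is odd) reduces the claim to $\frac{r-4}{6} \leq \frac{r-4}{2}$ or $\frac{r-3}{4} \leq \frac{r-4}{2}$ respectively, both of which hold for $r \geq 5$, with equality only in the odd case $r=5$.

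For the boundary step, the identity $h - \frac{r(r-2)}{4} + 1 = \frac{r^2+4}{4}$ at $h = \tfrac{r(r-1)}{2}$ lets me multiply through by $2r(i-\ell)$ to recast the target as
\begin{equation*}
(i-\ell)\bigl[\,2r(\ell-1)s + 2r(r-i) - r^2 + 4r - 8\,\bigr] \;\geq\; (r^2+4)(\ell-1),
\end{equation*}
a purely arithmetic statement in $(r,\ell,i)$ with $s = \lceil r/2 \rceil$. The bracket is at least $r^2 + 2r - r^2 + 4r - 8 = 6r - 8 > 0$ (using $\ell - 1 \geq 1$, $s \geq r/2$, and $r - i \geq 1$), so both sides are positive and the inequality has a clear direction. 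I would close with a short case split over (i) and (ii), focusing on the extremal configurations $(\ell, i) = (s-2, r-s+1)$ in case (i) and $(\ell, i) = (s-1, r-s+2)$ in case (ii), which jointly minimize $i - \ell$ and maximize $\ell - 1$.

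The main obstacle is that $\frac{\ell-1}{i-\ell}$ is not jointly monotone in $(\ell, i)$, so the ``hard'' configuration inside each case has to be pinned down by inspection rather than read off from the parameters; however, once the extremal pairs are identified, each resulting bound is polynomial in $r$ and $s = \lceil r/2 \rceil$ and is easily discharged by a parity split using $r \geq 5$.
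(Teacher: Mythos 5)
Your overall strategy coincides with the paper's: treat the claimed inequality as linear in $h$, show the coefficient of $h$ on the left is at least $1$ (so that the quantity of interest is non-decreasing in $h$), and then verify at $h = \binom{r}{2}$. Both your $\alpha \ge 1$ step and the paper's version use the same bound $i - \ell \geq r-2s+3$, and your parity split is a cosmetic variant of the paper's direct estimate. Up to this point your proposal is essentially identical to the paper's argument.

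The gap is in the boundary verification. After clearing denominators you correctly arrive at
\[
(i-\ell)\bigl[\,2r(\ell-1)s + 2r(r-i) - r^2 + 4r - 8\,\bigr] \;\geq\; (r^2+4)(\ell-1),
\]
but the left side, viewed for fixed $\ell$ as a function of $i$, is a \emph{concave} quadratic. Its minimum over the allowed range of $i$ therefore lies at one of the two endpoints, and the choice of endpoint depends on $r$ and $\ell$. Your proposed "extremal pair'' $(\ell,i)=(s-2,\,r-s+1)$ in case (i) is not in general the hardest one. For instance, at $r=11$ (so $s=6$) and $\ell=2$, the cleared left side takes its minimum over $i\in\{6,\dots,10\}$ at $i=10=r-1$, with value $552$; whereas at the pair you suggest, $(\ell,i)=(4,6)$, the left side is $842$ and all values for $\ell=4$ exceed $800$. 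So the true minimum in case (i) is at $(\ell,i)=(2,r-1)$, not at $(s-2,\,r-s+1)$. "Minimizing $i-\ell$ and maximizing $\ell-1$'' does not locate the tight configuration, and you have acknowledged but not resolved this.

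The paper circumvents the problem by splitting on the value of $\ell$: for $\ell\geq 3$ a direct estimate (using $i-\ell \geq r-2s+3$ and $i\leq r-1$) yields $\frac{r}{2}\cdot\frac{\ell-3}{2}+2>0$ without ever needing to pinpoint the extremal $i$; for $\ell=2$ they multiply through by $2(i-2)$ and explicitly minimize the resulting concave quadratic over $i\in[\,r-s+1,\,r-1\,]$ by evaluating at \emph{both} endpoints $i=\frac{r+1}{2}$ and $i=r-1$. To make your proof complete you would need to do the same: either adopt the paper's split on $\ell$, or, within your plan, check the left endpoint \emph{and} the right endpoint of the $i$-range for \emph{each} relevant value of $\ell$ (in particular $\ell=2$, which is actually the pinch point, not $\ell=s-2$).
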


\begin{lemma}\label{le:calculation-2}
Let $r\geq 5$, $s=\left\lceil\frac{r}{2}\right\rceil$, $2\leq \ell \leq s-1$ and $h\geq {r\choose 2}$.
Then $$2\frac{r-\ell-1}{r-\ell}\cdot \frac{r-1}{r}\left(h-\frac{r(r-2)}{4}+1\right)\geq h-(\ell-1)s.$$
\end{lemma}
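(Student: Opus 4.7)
The plan is to exploit the linearity of both sides in $h$, reduce to the hardest case $h={r\choose 2}$, and then verify a simple polynomial inequality in $r$ and $\ell$.

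Set $B\colonequals 2\tfrac{(r-\ell-1)(r-1)}{(r-\ell)r}$, so that the claim rewrites as $(B-1)h\ge B\left(\tfrac{r(r-2)}{4}-1\right)-(\ell-1)s$. First I would check that $B>1$ throughout the range $2\le \ell\le s-1$. Since $\tfrac{r-\ell-1}{r-\ell}=1-\tfrac{1}{r-\ell}$ is decreasing in $\ell$, so is $B$, hence it suffices to test the extreme case $\ell=s-1$. The inequality $B>1$ is equivalent to $(r-\ell)(r-2)>2(r-1)$, and using $r-\ell\ge (r+1)/2$ on this range reduces it to $r^{2}-5r+2>0$, which holds for all $r\ge 5$.

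Once $B>1$ is in place, the map $h\mapsto B\bigl(h-\tfrac{r(r-2)}{4}+1\bigr)-\bigl(h-(\ell-1)s\bigr)$ is affine in $h$ with positive slope $B-1$, so it suffices to verify the inequality at its smallest allowed value $h={r\choose 2}$. A direct substitution uses the pleasant identity ${r\choose 2}-\tfrac{r(r-2)}{4}+1=\tfrac{r^{2}}{4}+1$, and after collecting terms the statement reduces to
$$(\ell-1)\,s \;\ge\; \frac{(r-1)\bigl((r-2)^{2}+4\ell\bigr)}{2r(r-\ell)}.$$
Applying the trivial bound $s\ge r/2$ (valid since $s=\lceil r/2\rceil$) and clearing denominators, it is in turn enough to prove the polynomial inequality
$$(\ell-1)\,r^{2}(r-\ell) \;\ge\; (r-1)\bigl((r-2)^{2}+4\ell\bigr).$$

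To verify this final inequality, let $f(\ell)$ denote the difference of the two sides. On the range $2\le \ell\le (r-1)/2$ one has $f'(\ell)=r^{2}(r-2\ell+1)-4(r-1)\ge 2r^{2}-4(r-1)>0$, so $f$ is increasing and it suffices to check $f(2)=3r^{2}-16r+12\ge 0$, which holds for all $r\ge 5$. The main obstacle, if any, is really just bookkeeping: making sure the reduction to $h={r\choose 2}$ is legitimate (which is why the $B>1$ check must be done first) and that the relaxation $s\ge r/2$ does not throw away too much. It does not, since the resulting polynomial leaves a comfortable margin already at $\ell=2$, $r=5$.
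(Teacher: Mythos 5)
Your proof is correct and follows essentially the same route as the paper: show the coefficient $B>1$ to reduce to $h=\binom{r}{2}$, substitute, apply $s\ge r/2$, clear denominators, and verify a polynomial inequality in $\ell$. The one refinement you supply is noting that $f(\ell)=-r^2\ell^2+(r^3+r^2-4r+4)\ell-2r^3+5r^2-8r+4$ has positive derivative throughout $2\le\ell\le (r-1)/2$, so only the endpoint $\ell=2$ needs checking, whereas the paper invokes concavity and also verifies the other endpoint $\ell=(r-1)/2$.
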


\subsection{Graph packing}
\label{subsec:packing}

A {\it packing} of two graphs $G$ and $H$ is a bijection $\sigma: V(G)\to V(H)$ where $uv\in E(G)$ implies $\sigma(u)\sigma(v)\notin E(H)$.
In other words, there is a packing of two graphs $G$ and $H$ if and only if $G\subseteq \overline{H}$.
We shall use the following result of Sauer and Spencer \cite{SaSp}.

\begin{theorem}{\normalfont (\cite[Theorem~2]{SaSp})}\label{th:packing-prod}
Let $G$ and $H$ be two graphs with $n$ vertices.
If $e(G)e(H)<{n\choose 2}$, then there is a packing of $G$ and $H$.
\end{theorem}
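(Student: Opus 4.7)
My plan is to prove the packing theorem by a first-moment argument. Let $\sigma: V(G)\to V(H)$ be a uniformly random bijection, and let $X$ count the edges $uv \in E(G)$ for which $\sigma(u)\sigma(v)\in E(H)$. A packing of $G$ and $H$ corresponds precisely to a bijection with $X=0$, so since $X$ is a non-negative integer random variable, it suffices to prove $\mathbb{E}[X]<1$.

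To compute $\mathbb{E}[X]$, I would fix an edge $uv\in E(G)$ and observe that the unordered pair $\{\sigma(u),\sigma(v)\}$ is uniformly distributed over the $\binom{n}{2}$ unordered pairs of vertices in $V(H)$: for any prescribed pair $\{x,y\}$, exactly $2(n-2)!$ of the $n!$ bijections send $\{u,v\}$ to $\{x,y\}$, which gives probability $1/\binom{n}{2}$. Hence the probability that $\sigma(u)\sigma(v)$ lies in $E(H)$ is exactly $e(H)/\binom{n}{2}$. Summing over the $e(G)$ edges of $G$ and using linearity of expectation,
\[
\mathbb{E}[X] \;=\; \frac{e(G)\,e(H)}{\binom{n}{2}}.
\]
The hypothesis $e(G)\,e(H) < \binom{n}{2}$ then forces $\mathbb{E}[X]<1$, so some bijection $\sigma$ must achieve $X=0$, which is the required packing.

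There is no substantive obstacle: the entire argument is a single averaging over the uniform measure on bijections, and the only point worth verifying carefully is the symmetry step above (that a random bijection maps a fixed unordered edge to a uniformly random unordered pair). I note that the bound is essentially tight, as witnessed by taking $G$ to be a star $K_{1,n-1}$ and $H$ a perfect matching on $n$ vertices: one computes $e(G)\,e(H)=\binom{n}{2}$ while no packing exists, since packing the star would require the image of its center to be isolated in $H$.
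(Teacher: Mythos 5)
Your proof is correct and is precisely the standard argument: the paper only cites Theorem~\ref{th:packing-prod} to Sauer and Spencer without reproducing a proof, and the original Sauer--Spencer proof is the same union-bound/counting argument, just phrased as counting the number of permutations that map some $e_1\in E(G)$ onto some $e_2\in E(H)$ (namely at most $e(G)\,e(H)\cdot 2(n-2)! < n!$) rather than as a first-moment computation over a uniformly random bijection. The symmetry step you flag is the only place care is needed and you handle it correctly, and your tightness example (spanning star versus perfect matching) is the standard one.
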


The following result was conjectured by Milner and Welsh \cite{MiWe}, and proved by Sauer and Spencer \cite{SaSp} and Bollob\'{a}s and Eldridge \cite{BoEl} independently; see also \cite[Corollary~3.3]{Bra995}.

\begin{theorem}[\cite{BoEl,Bra995,SaSp}]\label{th:packing-sum}
Let $G$ and $H$ be two graphs with $n$ vertices.
If $e(G)+e(H)\leq \frac{3}{2}(n-1)$, then there is a packing of $G$ and $H$.
\end{theorem}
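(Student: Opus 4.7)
The plan is to prove Theorem~\ref{th:packing-sum} by induction on $n$. The base cases $n \leq 3$ are immediate, since the hypothesis $e(G)+e(H) \leq \frac{3}{2}(n-1)$ leaves at most a couple of edges in total, and any bijection then automatically gives a packing.

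For the inductive step, I would pick $u \in V(G)$ with $d_G(u) = \delta(G)$ and $v \in V(H)$ with $d_H(v) = \Delta(H)$, and try to build a packing $\sigma$ with $\sigma(u) = v$. Deleting $u$ and $v$ leaves graphs $G' = G - u$ and $H' = H - v$ on $n-1$ vertices with $e(G') + e(H') = e(G) + e(H) - \delta(G) - \Delta(H)$. When $\delta(G) + \Delta(H) \geq 2$, this quantity is at most $\frac{3}{2}(n-1) - 2 \leq \frac{3}{2}(n-2)$, so the induction hypothesis provides a packing of $G'$ and $H'$. To extend such a packing to $\sigma(u)=v$, I need to guarantee that no neighbor of $u$ in $G$ is sent to a neighbor of $v$ in $H$; the cleanest way to enforce this is to strengthen the inductive statement so that a small list of forbidden image constraints is respected, or equivalently to add a small auxiliary gadget connecting $N_G(u)$ to $N_H(v)$ before applying induction, and then verify that the edge budget is not overshot.

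The main obstacle is the residual case $\delta(G) + \Delta(H) \leq 1$, which forces $\delta(G) = 0$ and $\Delta(H) \leq 1$ (or the symmetric configuration after exchanging $G$ and $H$), so that $H$ is a disjoint union of isolated vertices and a matching. Here the induction degenerates and a direct structural argument is required. Since $\Delta(H) \leq 1$, the complement $\overline{H}$ has minimum degree at least $n-2$, so a greedy embedding of $G$ into $\overline{H}$, placing vertices of $G$ in an order of decreasing degree, succeeds: each vertex of $G$ encountered has few already-placed neighbors to avoid, and the high minimum degree of $\overline{H}$ guarantees a feasible image at every step. For any tiny leftover instances not covered by either branch of the argument, Theorem~\ref{th:packing-prod} serves as a backstop whenever $e(G)\cdot e(H) < \binom{n}{2}$, and the symmetric case ($\delta(H)=0$ and $\Delta(G)\leq 1$) follows by swapping the roles of $G$ and $H$.
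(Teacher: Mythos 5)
The paper does not prove Theorem~\ref{th:packing-sum}; it cites it from the literature (Bollob\'as--Eldridge, Brandt, Sauer--Spencer) and uses it as a black box in the proof of Lemma~\ref{le:r_vertex-degree}. So there is no in-paper proof to compare your attempt against, and I will instead evaluate the attempt on its own merits. It has two genuine gaps.

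First, the inductive step does not go through as sketched. You delete $u$ of minimum degree from $G$ and $v$ of maximum degree from $H$, pack $G'=G-u$ and $H'=H-v$ by induction, and then want to extend by setting $\sigma(u)=v$. But a packing of $G'$ and $H'$ gives you no control over where $N_G(u)$ lands; you need the stronger guarantee that $\sigma(N_G(u))\cap N_H(v)=\emptyset$, and the inductive hypothesis simply does not provide it. You gesture at two fixes --- strengthening the inductive statement to carry a list of forbidden pairs, or adding an ``auxiliary gadget'' --- but neither is carried out, and neither is obviously workable. Carrying a list of $\delta(G)\cdot\Delta(H)$ forbidden vertex--image pairs does not reduce in size under the recursion, so the strengthened statement does not self-support; and the gadget idea founders on the fact that a bipartite constraint of the form ``$\sigma(a)\neq b$ for $a\in V(G)$, $b\in V(H)$'' cannot be encoded by adding a few edges to $G'$ and $H'$ without also controlling where the gadget vertices are mapped, which is the very thing a packing does not promise. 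This extension problem is the real content of the theorem, and the attempt essentially defers it rather than resolving it.

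Second, the residual case $\delta(G)=0$, $\Delta(H)\leq 1$ is not handled correctly. Here $\overline{H}$ has minimum degree at least $n-2$, but the proposed greedy embedding of $G$ into $\overline{H}$ in decreasing-degree order can get stuck. A concrete small example: $n=4$, $G=K_{1,3}$ with center $1$, $H$ a single edge $ab$ plus isolated vertices $c,d$ (so $e(G)+e(H)=4\le\frac{3}{2}\cdot 3$). Placing $1\mapsto a$ first forces $2,3,4$ to avoid $b$, which is impossible since only three of the four spots $b,c,d$ remain available to them and one must take $b$. A packing exists (map $1$ to $c$ or $d$), but the greedy rule as stated does not find it; the high minimum degree of $\overline{H}$ alone is not enough to make a naive greedy succeed at late steps, where the number of already-occupied spots plus forbidden spots can exceed $n-1$. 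One can repair this case --- for instance, by observing that it reduces to finding a matching of size $e(H)$ in $\overline{G}$ and invoking an Erd\H{o}s--Gallai-type bound --- but that argument needs to be made; it is not a consequence of ``greedy in decreasing-degree order succeeds.''

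Finally, the backstop via Theorem~\ref{th:packing-prod} is weaker than you suggest: under the hypothesis $e(G)+e(H)\le\frac{3}{2}(n-1)$, AM--GM gives $e(G)e(H)\le\frac{9}{16}(n-1)^2$, which is below $\binom{n}{2}$ only for $n\le 8$. So Sauer--Spencer's product bound does not cover the ``tiny leftover instances'' beyond very small $n$, and the two main cases above must actually be closed.
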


\subsection{Existing results for multicolor Tur\'an problems}
\label{subsec:tool}
In this subsection, we state several existing results from \cite{CKLLS,KSSV} for the use of our proof.
The {\it underlying graph} of a multigraph $H$ is the simple graph with vertex $V(H)$ and edge set $\big\{e\in {V(H)\choose 2}\colon\, w_H(e)\geq 1\big\}$.
The chromatic number of a multigraph is given by the chromatic number of its underlying graph.
A multigraph is called {\it $r$-color-critical} if it has chromatic number $r$, and it has an edge (i.e., a {\it critical edge}) whose removal decreases the chromatic number.
We say that a simple $k$-coloring is {\it nested} if its colors form a chain under inclusion, i.e., a simply $k$-colored multigraph with colors $G_1, \ldots, G_k$ is nested if $G_{\pi(1)}\subseteq \cdots \subseteq G_{\pi(k)}$ for some permutation $\pi$ on $[k]$.
It was shown in \cite{CKLLS,KSSV} that if $G$ is a simply $k$-colored multicolored-$H$-free multigraph, then there is a simply $k$-nested-colored multicolored-$H$-free multigraph $F$ with $V(F)=V(G)$ and $E(F)=E(G)$.
The following lemma reduces the upper range of Conjecture~\ref{conj:KSSV-critical} to nested multigraphs with high minimum degree.
This lemma is in fact a consequence of both \cite[Proposition~3.3]{CKLLS} (an analogous idea was also used in the proofs of Theorems~3.1 and 3.2 in \cite{KSSV}) and \cite[Proposition~3.5]{CKLLS} (see also \cite[Lemma~2.1]{KSSV}).

\begin{lemma}[\cite{CKLLS,KSSV}]\label{le:nested_degree}
Let $r\geq 3$, $k\geq 1$, and $H$ be an $r$-color-critical multigraph.
Suppose that there exists an $n_0$ such that for all $n\geq n_0$, every $n$-vertex simply $k$-nested-colored multicolored-$H$-free multigraph $G$ with $e(G)\geq k\cdot t_{r-1}(n)$ and $\delta(G)\geq k\delta(T_{r-1}(n))$ must be a $k\cdot T_{r-1}(n)$.
Then there exists an $n_1$ such that for all $n\geq n_1$, every $n$-vertex simply $k$-colored multicolored-$H$-free multigraph $G$ with $e(G)\geq k\cdot t_{r-1}(n)$ must be a $k\cdot T_{r-1}(n)$.
\end{lemma}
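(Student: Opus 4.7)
The plan is to reduce an arbitrary simply $k$-colored multigraph to the nested, high-minimum-degree setting covered by the hypothesis. The nested reduction is already supplied for free: as noted in the paragraph preceding the lemma, any $n$-vertex simply $k$-colored multicolored-$H$-free multigraph $G$ can be re-colored to produce a simply $k$-nested-colored multicolored-$H$-free multigraph $F$ with $V(F)=V(G)$ and $E(F)=E(G)$, so $G$ and $F$ are the same multigraph. It therefore suffices to show that for all sufficiently large $n$, every $n$-vertex simply $k$-nested-colored multicolored-$H$-free multigraph $F$ with $e(F)\geq k\cdot t_{r-1}(n)$ equals $k\cdot T_{r-1}(n)$.

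To remove the degree assumption, I would iteratively peel off vertices of small degree. Set $F_0:=F$; while the current multigraph $F_j$ on $n-j$ vertices contains a vertex $v$ with $d_{F_j}(v)<k\delta(T_{r-1}(n-j))$, let $F_{j+1}:=F_j-v$. Two stability properties are immediate: restricting each color chain to $V(F_j)\setminus\{v\}$ preserves nestedness, and any multicolored copy of $H$ in a sub-multigraph already sits inside the super-multigraph, so being multicolored-$H$-free is preserved as well. The key edge-count step uses the identity
\[
t_{r-1}(n)-t_{r-1}(n-1)=\delta(T_{r-1}(n)),
\]
which holds because deleting a minimum-degree vertex from $T_{r-1}(n)$ yields $T_{r-1}(n-1)$. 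Combined with $d_{F_j}(v)\leq k\delta(T_{r-1}(n-j))-1$, this gives $e(F_{j+1})\geq e(F_j)-k\delta(T_{r-1}(n-j))+1$, and a simple induction on $j$ yields $e(F_j)\geq k\cdot t_{r-1}(n-j)+j$; in other words, the ``excess'' $e(F_j)-k\cdot t_{r-1}(n-j)$ strictly grows by at least one with every deletion.

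This excess bound then pins down the termination. By~\eqref{eq:t_r-1} we have $\binom{n-j}{2}-t_{r-1}(n-j)\leq \frac{(n-j)^2}{2(r-1)}$, and trivially $e(F_j)\leq k\binom{n-j}{2}$, hence $j\leq \frac{k(n-j)^2}{2(r-1)}$ throughout the iteration. Choosing $n_1$ larger than $n_0+\frac{k(n_0-1)^2}{2(r-1)}$ therefore forces the process to halt at some $F^*$ on $n^*\geq n_0$ vertices with $\delta(F^*)\geq k\delta(T_{r-1}(n^*))$. Applying the hypothesis to $F^*$ yields $F^*=k\cdot T_{r-1}(n^*)$, which has exactly $k\cdot t_{r-1}(n^*)$ edges, while the excess estimate gives $e(F^*)\geq k\cdot t_{r-1}(n^*)+(n-n^*)$; together these force $n^*=n$, i.e., no deletion ever took place, so $F$ itself already satisfies the hypothesis and hence equals $k\cdot T_{r-1}(n)$, whence so does $G$. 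The main nontrivial point in this plan is really just the exact identity $t_{r-1}(n)-t_{r-1}(n-1)=\delta(T_{r-1}(n))$, which drives the excess to strictly increase by at least one at each step and is what lets the iteration close.
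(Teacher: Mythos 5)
The paper cites this lemma from \cite{CKLLS,KSSV} without reproducing a proof, so there is nothing internal to the paper to compare against; your argument stands on its own. Your vertex-peeling reduction --- first re-color to a nested coloring as licensed by the remark preceding the lemma, then iteratively delete any vertex of degree below $k\,\delta(T_{r-1}(\cdot))$ while tracking the excess $e(F_j)-k\cdot t_{r-1}(n-j)$ via the exact identity $t_{r-1}(n)-t_{r-1}(n-1)=\delta(T_{r-1}(n))$, bound the number of deletions by $j\le \frac{k(n-j)^2}{2(r-1)}$, and finally contradict the positive excess with $e(F^*)=k\cdot t_{r-1}(n^*)$ --- is correct and is precisely the standard degree-cleaning argument one expects here. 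One very small point you could state explicitly at the end: concluding that $F$ (hence $G$, which shares $V$ and the edge multiset) equals $k\cdot T_{r-1}(n)$ also forces $G$'s simple $k$-coloring, since every edge of $T_{r-1}(n)$ having multiplicity $k$ means each of the $k$ colors must equal $T_{r-1}(n)$; this is immediate, but it is what upgrades equality of edge multisets to the colored statement ``must be a $k\cdot T_{r-1}(n)$.''
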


To show a host multigraph under certain circumstances is $(r-1)$-partite,
the following family of subgraphs with specified property serves as an intermediate step in our proof.

\begin{definition}{\normalfont (\cite[Definition~3.7]{CKLLS})}\label{def:H-friendly}
{\rm Let $k\geq \frac{r-1}{r-2}(h-1)$, $H$ be an $r$-color-critical multigraph with $h$ edges, and $K$ be a simply $k$-colored $(r-1)$-partite multigraph with partite sets $W_1, \ldots, W_{r-1}$ of equal size $t$.
We say that $K$ is {\it $H$-friendly}\footnote{In \cite{CKLLS}, the concept of an $H$-friendly multigraph was also defined for the lower range $k<\frac{r-1}{r-2}(h-1)$.} if the multigraph obtained in the following way always contains a multicolored $H$:
add a new vertex $v$ to $K$ and add edges incident to $v$ with multiplicity at most $k$ so that $\sum_{i\in [r-1]}d_{W_i}(v)\geq (r-2)tk$ and $d_{W_i}(v)\geq 1$ for all $i\in [r-1]$.}
\end{definition}

The following lemma, proved in \cite{CKLLS} using a stability argument,
states that if there exists an $H$-friendly induced subgraph in a simply $k$-colored multigraph $G$ with certain properties, then $G$ possesses the desired $(r-1)$-partite global structure.

\begin{lemma}{\normalfont (\cite[Lemma~3.8]{CKLLS})}\label{le:partite}
Let $0<\frac{1}{n}\ll \delta \ll \frac{1}{k}, \frac{1}{m}, \frac{1}{t} \leq 1$, $r\geq 4$ and $k\geq \frac{r-1}{r-2}(h-1)$.
Let $H$ be an $m$-vertex $r$-color-critical multigraph with $h$ edges, and $G$ be an $n$-vertex simply $k$-colored multicolored-$H$-free multigraph with $\delta (G)\geq (1-\delta)k\delta(T_{r-1}(n))$.
If $G$ contains a $t(r-1)$-vertex $H$-friendly subgraph as an induced subgraph, then $G$ is $(r-1)$-partite.
\end{lemma}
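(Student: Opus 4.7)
The plan is to use the induced $H$-friendly subgraph $K$, with partite sets $W_1,\ldots,W_{r-1}$ of size $t$, as a template to certify an $(r-1)$-partition of $V(G)$. The lemma is applied in contexts where $G$ is multicolored-$H$-free, so $H$-friendliness supplies a forbidden-extension mechanism: no $v\in V(G)\setminus V(K)$ can simultaneously satisfy $d_{W_i}(v)\ge 1$ for every $i$ and $\sum_i d_{W_i}(v)\ge (r-2)tk$, for otherwise $K\cup\{v\}$ would host a multicolored $H$ inside $G$.

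First I would assign to each $u\in V(G)\setminus V(K)$ an index $i(u)\in[r-1]$ with $d_{W_{i(u)}}(u)=0$. If $u$ has $d_{W_j}(u)\ge 1$ for every $j$ and $\sum_j d_{W_j}(u)\ge(r-2)tk$, the forbidden-extension mechanism yields an immediate contradiction. The delicate case is when $d_{W_j}(u)\ge 1$ for all $j$ but $\sum_j d_{W_j}(u)<(r-2)tk$: such a \emph{bad} vertex contributes only $O(k)$ to $V(K)$ while carrying total multi-degree $\ge(1-\delta)k\delta(T_{r-1}(n))=\Omega(kn)$. I would exploit this imbalance to show that the set $B$ of bad vertices is small, then eliminate $B$ by an iterative surgery: substitute a bad vertex for a carefully chosen vertex of some $W_j$, check that the modified template remains $(r-1)$-partite and $H$-friendly, and argue that some monovariant (for instance, the total multi-degree from $B$ into $V(K)$) strictly decreases, forcing the process to terminate with $B=\emptyset$.

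Once every $u\notin V(K)$ has a well-defined class $i(u)$, set $V_i=W_i\cup\{u:i(u)=i\}$ and verify that $G[V_i]$ carries no edge. Suppose for contradiction that $uv\in E(G)$ with $u,v\in V_i\setminus W_i$; by construction $d_{W_i}(u)=d_{W_i}(v)=0$, while $d_{W_j}(u),d_{W_j}(v)\ge 1$ for every $j\ne i$. Fix a critical edge $xy$ of $H$, so that $H-xy$ admits a proper $(r-1)$-coloring $C_1\sqcup\cdots\sqcup C_{r-1}$ with $x,y\in C_1$. I would embed $H$ into $G$ multicoloredly by mapping $x\mapsto u$ and $y\mapsto v$ (using $uv$ to carry the critical edge), placing $C_1\setminus\{x,y\}$ inside $W_i$, placing each remaining class $C_j$ into the corresponding $W_j$ after relabelling, and greedily assigning distinct colors to the $h$ edges of $H$; the hypothesis $k\ge\frac{r-1}{r-2}(h-1)$ together with the abundant multiplicities inside $K$ and between $\{u,v\}$ and the $W_j$'s should guarantee that the greedy color assignment succeeds. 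The resulting multicolored copy of $H$ contradicts multicolored-$H$-freeness.

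The principal obstacle is the bad sub-case of the classification step, which $H$-friendliness alone does not capture; its treatment requires the minimum-degree hypothesis, careful template surgery, and verification that each modified template remains $H$-friendly — here the parameter hierarchy $\delta\ll 1/k,1/m,1/t$ and the assumption $k\ge\frac{r-1}{r-2}(h-1)$ should provide the needed slack. A secondary technical point is the colour bookkeeping in the final embedding, where one must route $h$ edges through $h$ distinct colours while respecting the edge-multiplicities dictated by $K$.
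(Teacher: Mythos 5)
The paper does not prove Lemma~\ref{le:partite}; it is quoted from \cite{CKLLS} and used as a black box, so there is no in-paper proof to compare against. (As transcribed in this paper, the hypothesis that $G$ is multicolored-$H$-free is absent from the statement, but you correctly supply it; without it the conclusion fails, since one can perturb $k\cdot T_{r-1}(n)$ by a few extra heavy edges without destroying an induced $H$-friendly piece or the minimum-degree condition.)

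Assessing the proposal on its own terms, the high-level shape — use $H$-friendliness as a forbidden-extension certificate, assign each outside vertex a part $W_{i(u)}$ it avoids, then argue that an edge interior to a class yields a multicolored $H$ — is the right one. But the handling of the ``bad'' case has a genuine gap. You argue that a bad vertex is distinguished because it contributes only $O(k)$ to $V(K)$ while having total multi-degree $\Omega(kn)$, and that this ``imbalance'' should force the bad set $B$ to be small. This dichotomy is not special to bad vertices: since $|V(K)|=t(r-1)$ is a constant, every vertex $v\notin V(K)$ has $d_{V(K)}(v)\le kt(r-1)=O(k)$, and every vertex has total degree $\Omega(kn)$ from the minimum-degree hypothesis. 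Moreover, a double count shows the average of $d_{V(K)}(v)$ over $v\notin V(K)$ is roughly $(1-\delta)(r-2)tk$, just \emph{below} the $H$-friendliness threshold $(r-2)tk$, so a non-negligible proportion of vertices can lie below threshold. Nothing in the proposal rules out $|B|=\Omega(n)$. The proposed remedy — iterative template surgery guided by an unspecified monovariant — is exactly where the real work has to happen, and it is left open: it is not shown that swapping a bad vertex into some $W_j$ preserves $H$-friendliness (one must keep all cross-pairs heavy and retain a critical-edge slot), nor that any reasonable monovariant strictly decreases.

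Two smaller points. In the final embedding you rely on $d_{W_j}(u),d_{W_j}(v)\ge 1$ for every $j\ne i$, but your classification only supplies one index of zero degree; a good vertex can have zero degree to several parts, in which case the chosen placement of $C_j$ fails. And the greedy colour routing is not automatic: to invoke a Hall-type lemma along the lines of Lemma~\ref{le:Hall} one needs quantitative multiplicity bounds on the template edges and on $u,v$'s edges into the $W_j$, not merely nonzero multiplicity; the $H$-friendly definition does not by itself certify such bounds.
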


We will use the following result from \cite{CKLLS} iteratively to find a subset of vertices such that, there are many edges with high multiplicities incident with them.
The same idea was also employed in \cite{KSSV} (see, for example, the proof of \cite[Theorem~3.2]{KSSV}).

\begin{lemma}{\normalfont (\cite[Proposition~3.4]{CKLLS})}\label{le:vertices}
Let $0<\frac{1}{n}\ll \delta \ll \frac{1}{k}, \frac{1}{d}, \frac{1}{t} <1$.
Suppose that $G$ is an $n$-vertex simply $k$-colored multigraph with $\delta (G)\geq (1-\delta)d(n-1)$, and $U\subseteq V(G)$ is a vertex subset of size $t$.
Then there is a vertex $v\in V(G)\setminus U$ such that $d_U(v)\geq dt$.
\end{lemma}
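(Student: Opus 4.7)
The plan is an averaging argument on the edges between $U$ and $V(G)\setminus U$ counted with multiplicity, closed off by a short integrality step.

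I would start by double-counting: $\sum_{u \in U} d_G(u) = 2e(G[U]) + e(U, V(G)\setminus U)$. The minimum-degree hypothesis bounds the left side from below by $t(1-\delta)d(n-1)$, while $G$ being a $k$-colored multigraph bounds $2e(G[U]) \leq kt(t-1)$. Hence
$$\sum_{v \in V(G)\setminus U} d_U(v) \;=\; e(U, V(G)\setminus U) \;\geq\; t(1-\delta)d(n-1) - kt(t-1).$$
Dividing by $n-t$, the hierarchy $1/n \ll \delta \ll 1/k, 1/d, 1/t$ makes the right-hand side equal to $dt - o(1)$, so there exists some $v_0 \in V(G) \setminus U$ with $d_U(v_0) \geq dt - \varepsilon$ for any prescribed positive constant $\varepsilon$ (first choose $\delta$ small, then $n$ large).

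To upgrade $d_U(v_0) \geq dt - \varepsilon$ to the stated conclusion $d_U(v_0) \geq dt$, I would use that $d_U(v_0)$ is a non-negative integer. Set $\tau := dt - \lceil dt \rceil + 1 \in (0,1]$ (this is the distance from $dt$ to the largest integer strictly less than $dt$, equal to $1$ when $dt\in\mathbb{Z}$ and to the fractional part of $dt$ otherwise); it is a fixed positive constant depending only on $d$ and $t$. Choosing $\varepsilon < \tau$ gives $\lceil dt - \varepsilon \rceil = \lceil dt \rceil \geq dt$, and integrality then promotes $d_U(v_0) \geq dt - \varepsilon$ to $d_U(v_0) \geq \lceil dt \rceil \geq dt$.

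The only genuinely subtle step is this last rounding, needed because pure averaging produces only an approximate bound $dt - o(1)$ and can fail to deliver $d_U(v_0) \geq dt$ directly whenever $d \leq k$. The hierarchy is tuned precisely so that the error $\varepsilon$ (which is of order $O(td\delta) + O_{k,t}(1/n)$) can be forced below the fixed positive constant $\tau$, closing the argument; the rest is a routine degree count using $\delta(G) \geq (1-\delta)d(n-1)$ and $e(G[U]) \leq k\binom{t}{2}$.
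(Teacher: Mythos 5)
Your proof is correct, and since Lemma~\ref{le:vertices} is cited from~\cite{CKLLS,KSSV} without being reproved in this paper, there is no in-paper proof to compare against; the double count $\sum_{v\in V(G)\setminus U} d_U(v) = \sum_{u\in U}d_G(u) - 2e(G[U])$ followed by the integrality promotion is exactly the natural (and standard) argument, with the hierarchy $\delta\ll 1/k,1/d,1/t$ permitting $\delta$ to be chosen small relative to the fixed gap $\tau=\tau(d,t)$. One small correction to your closing remark: the rounding step is needed unconditionally, not only when $d\le k$, because $\delta\gg 1/n$ forces $\delta(n-1)\to\infty$, so the $\delta$-loss in the minimum-degree hypothesis already pushes the raw averaging lower bound strictly below $dt$ for large $n$ regardless of the sign of $d-k$ (and in any case $d\le k/(1-\delta)$ always holds for a simply $k$-coloured multigraph).
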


Given two multigraphs $H$ and $G$, we say that an injection $\phi: V(H) \to V(G)$ is an {\it embedding} if $w_G(\phi(u)\phi(v))\geq w_H(uv)$ for all $uv\in {V(H)\choose 2}$.
For $e=uv\in {V(H)\choose 2}$, we also write $\phi(e)$ for $\phi(u)\phi(v)$.
The following lemma provides a sufficient condition that guarantees the existence of a multicolored subgraph.
An intuitive explanation for this lemma is that we can greedily select colors of edges with respect to the ordering $(f_1, \ldots, f_t)$.

\begin{lemma}{\normalfont (\cite[Proposition~3.6]{CKLLS})}\label{le:Hall}
Let $G$ be a simply $k$-colored multigraph, $H$ be a multigraph, and $t=\big|\big\{e\in {V(H)\choose 2}\colon\, w_H(e)\geq 1 \big\}\big|$.
Suppose that there is an embedding $\phi: V(H) \to V(G)$.
If there is an enumeration $(f_1, \ldots, f_t)$ of the edges in $\big\{e\in {V(H)\choose 2}\colon\, w_H(e)\geq 1\big\}$ such that $w_G(\phi(f_i))\geq \sum_{\ell=1}^{i}w_H(f_{\ell})$ for all $i \in [t]$, then $\phi(H)$ yields a multicolored copy of $H$ in $G$.
\end{lemma}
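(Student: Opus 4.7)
The plan is to reduce the statement to Hall's marriage theorem applied to a bipartite auxiliary graph $B$. On the left side of $B$, place, for every $f \in \binom{V(H)}{2}$ with $w_H(f) \geq 1$, a collection of $w_H(f)$ distinct ``slots''; on the right side, place the $k$ colors of the given simple $k$-coloring of $G$. Join a slot corresponding to $f$ to a color $c$ precisely when the color class $G_c$ contains the edge $\phi(f)$; this is well defined because $\phi$ is an embedding, so in particular $w_G(\phi(f)) \geq w_H(f) \geq 1$. A perfect matching saturating the left side of $B$ assigns, to each edge $f$ of $H$, exactly $w_H(f)$ pairwise distinct colors (and with all colors distinct across different edges of $H$ too), which is exactly the data of a multicolored copy of $H$ sitting on the vertex set $\phi(V(H))$ in $G$.

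Hence the whole task reduces to verifying Hall's condition for $B$. Let $S$ be any nonempty set of left slots, and let $T \subseteq \{f_1,\ldots,f_t\}$ be the set of edges of $H$ to which the slots in $S$ belong. Let $j$ be the largest index such that $f_j \in T$. Every color $c$ containing the edge $\phi(f_j)$ is a neighbor in $B$ of the (at least one) slot in $S$ associated with $f_j$, so $N_B(S) \supseteq \{c \in [k] : \phi(f_j) \in G_c\}$, giving $|N_B(S)| \geq w_G(\phi(f_j))$. By the hypothesis of the lemma this is at least $\sum_{\ell=1}^{j} w_H(f_\ell)$, and since $T \subseteq \{f_1,\ldots,f_j\}$ we obtain
\[
|N_B(S)| \;\geq\; \sum_{\ell=1}^{j} w_H(f_\ell) \;\geq\; \sum_{f \in T} w_H(f) \;\geq\; |S|.
\]
Thus Hall's condition holds, $B$ admits a perfect matching from the left side into $[k]$, and this matching is precisely a proper assignment of distinct colors to the edges of $\phi(H)$ realising a multicolored copy of $H$.

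There is no genuine obstacle here: the only subtle point is choosing the right bipartite model (splitting each edge of $H$ into $w_H(f)$ slots so that distinct color usage is automatically enforced by the matching), and then observing that the monotone enumeration $(f_1,\ldots,f_t)$ lets a single vertex $f_j$ on the right side already dominate Hall's condition for any subset $S$. The role of the hypothesis $w_G(\phi(f_i)) \geq \sum_{\ell=1}^{i} w_H(f_\ell)$ is exactly to supply the ``worst-case'' Hall bound via the last-indexed edge in $T$.
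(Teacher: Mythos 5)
Your proof is correct, and it implements exactly the reduction to Hall's theorem that the paper itself flags (``This is in fact a consequence of Hall's theorem'') without supplying details, since the lemma is cited from \cite{CKLLS}. The bipartite model --- splitting each edge $f$ of $H$ into $w_H(f)$ slots on the left, colors of $G$ on the right, adjacency governed by $\phi(f) \in G_c$ --- is the natural one, and your observation that the maximum-index edge $f_j$ in the set $T$ already certifies Hall's condition is the right use of the monotone hypothesis $w_G(\phi(f_i)) \geq \sum_{\ell=1}^{i} w_H(f_\ell)$.
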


\section{Proof sketch of Theorem~\ref{th:main}}
\label{sec:proof_sketch}

In this section, we supply a proof sketch of Theorem~\ref{th:main}.
In fact, we will prove Theorem~\ref{th:main} in a slightly stronger form (Theorem~\ref{th:main+}).
To illustrate the sketch of our proof, we need some additional definitions introduced in \cite{CKLLS}.
For an $r$-color-critical multigraph $H$, we call a proper vertex-coloring $c$ with color classes $V_1, \ldots, V_r$ a {\it critical coloring} if there exists two colors $i,j$ such that $e(V_i, V_j)=1$.
Given its critical coloring $c$, the {\it color-reduced multigraph} of $H$, denoted as $H_{c}$, is the multigraph with vertex set $[r]$ and $w_{H_c}(ij)=e(V_i,V_j)$ for each $ij\in {[r]\choose 2}$.
Note that $H_c$ is an $r$-vertex $r$-color-critical multigraph.
Intuitively, if we can prove a multicolor Tur\'{a}n-type result for such a multigraph $H_c$, then we can deal with the reduced multigraphs obtained by applying the multicolor Regularity Lemma to multicolored-$H$-free multigraphs, and finally deduce our main result using this intermediate result and the Embedding Lemma.

In our proof, expanding on the stability argument proposed by Chakraborti, Kim, Lee, Liu and Seo \cite{CKLLS}, we introduce a graph packing technique for embedding multigraphs.
Our proof consists of two steps as follows:
%
\begin{itemize}[leftmargin=0.55cm, itemindent=1cm]
\item[{\bf Step I.}] Establishing the result for $r$-vertex $r$-color-critical multigraphs (see Theorem~\ref{th:r_vertex}) via graph packing arguments; \vspace{-0.2cm}
\end{itemize}
\begin{itemize}[leftmargin=0.7cm, itemindent=1cm]
\item[{\bf Step II.}] Completing the proof of Theorem~\ref{th:main+} (for general $r$-color-critical graphs) via stability arguments. \vspace{-0.1cm}
\end{itemize}

In Step I, the key is to prove Lemma~\ref{le:r_vertex-degree}, which states that for an $r$-vertex $r$-color-critical multigraph $H$,
if a host multigraph with relatively large minimum degree contains no multicolored $H$, then it must be an $(r-1)$-partite multigraph.
Once this lemma is proven, we can readily deduce Theorem~\ref{th:r_vertex}.
This theorem represents a result that is analogous to Theorem~\ref{th:main+}, but it specifically pertains to $r$-vertex $r$-color-critical multigraphs.
In the proof of Lemma~\ref{le:r_vertex-degree}, our main task is to find a multicolored copy of $H$ in certain $r$-vertex multigraph $F^+$.
The multigraph $F^+$ contains a subset $E_1$ of edges with low multiplicities, while the multigraph $H$ contains a subset $E_2$ of edges with high multiplicities.
When embedding $H$ into $F^+$, one must embed edges of low multiplicities in $H$ into the edge-set $E_1$.
To this end, it suffices to show that there is a packing of $F^+[E_1]$ and $H[E_2]$ (we may add some isolated vertices if these two graphs have different orders),
which we manage to achieve by carefully estimating the multiplicities of the relevant edges.

In Step II, we complete the proof of Theorem~\ref{th:main+} (and also Theorem~\ref{th:main}) by a routine application of the stability argument,
which in this context was introduced in \cite{CKLLS}.
In order to extend the result from $r$-vertex critical multigraphs to critical graphs with any number of vertices,
we apply the multicolor version of Szemer\'{e}di's Regularity Lemma to a multicolored-$H$-free multigraph, and get a reduced multigraph.
By the Embedding Lemma, we argue this reduced multigraph must be multicolored-$H_c$-free, where $H_c$ is an $r$-vertex $r$-color-critical multigraph.
Finally, we make use of Theorem~\ref{th:r_vertex} given by Step I to derive the main result of this paper.


\section{Step I: graph packing of $r$-vertex multigraphs}
\label{sec:r_vertex}

In this section, we prove the following result for $r$-vertex $r$-color-critical multigraphs.

\begin{theorem}\label{th:r_vertex}
Let $r\geq 5$ and $H$ be an $r$-vertex $r$-color-critical multigraph with $h$ edges.
If $n$ is sufficiently large and $$k> \max\left\{\frac{r-1}{r-2}(h-1), 2\frac{r-1}{r}\left(h-\frac{r(r-2)}{4}+1\right)\right\},$$ then $\ex_k(n, H)=k\cdot t_{r-1}(n)$,
and the unique $n$-vertex $k$-color extremal multigraph of $H$ consists of $k$ colors all of which are identical copies of $T_{r-1}(n)$.
\end{theorem}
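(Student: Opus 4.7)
The plan is to apply Lemma~\ref{le:nested_degree} to reduce the theorem to the following key claim: every $n$-vertex simply $k$-nested-colored multicolored-$H$-free multigraph $G$ with $\delta(G)\geq k\delta(T_{r-1}(n))$ has an $(r-1)$-partite underlying graph. Granting this claim, the theorem follows quickly, because the nested chain of colors $G_{\pi(1)}\subseteq\cdots\subseteq G_{\pi(k)}$ then consists of $(r-1)$-partite (hence $K_r$-free) simple graphs, so Tur\'an's theorem gives $e(G_{\pi(i)})\leq t_{r-1}(n)$. Together with $e(G)\geq k\cdot t_{r-1}(n)$ and the uniqueness of the extremal $K_r$-free graph, equality holds for every $i$, and the nesting forces every color to be the \emph{same} copy of $T_{r-1}(n)$, giving $G=k\cdot T_{r-1}(n)$.

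To prove the key claim, I would argue by contradiction: assuming the underlying graph contains a $K_r$, apply Lemma~\ref{le:vertices} iteratively inside the dense multigraph $G$ to pick vertices $v_1,\ldots,v_r$ forming an $r$-clique in the underlying graph, chosen greedily so that as many pairs $v_iv_j$ as possible carry edge-multiplicity close to $k$. Writing $F^+\colonequals G[\{v_1,\ldots,v_r\}]$, let $E_1\subseteq\binom{\{v_1,\ldots,v_r\}}{2}$ be the set of ``light'' edges of $F^+$ whose multiplicities fall below the threshold required to host the heaviest edges of $H$, and let $E_2\subseteq\binom{V(H)}{2}$ be the correspondingly ``heavy'' edges of $H$. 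The heart of the argument is then: a valid embedding $\phi:V(H)\to\{v_1,\ldots,v_r\}$ satisfying the hypothesis of Lemma~\ref{le:Hall} exists as soon as there is a bijection that sends no edge of $E_2$ to an edge of $E_1$, that is, a packing of the two simple $r$-vertex graphs $F^+[E_1]$ and $H[E_2]$, and such a packing will be furnished by Theorem~\ref{th:packing-prod} or Theorem~\ref{th:packing-sum} once one shows either $|E_1|\cdot|E_2|<\binom{r}{2}$ or $|E_1|+|E_2|\leq\frac{3(r-1)}{2}$.

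The main obstacle, I expect, is quantitative: organising the case analysis on how the heavy clique was built (which controls $|E_1|$) and on the multiplicity profile of $H$ (which controls $|E_2|$), so that the hypothesis $k>2\frac{r-1}{r}(h-\frac{r(r-2)}{4}+1)$ translates into one of the two packing inequalities. The technical Lemmas~\ref{le:calculation-1} and~\ref{le:calculation-2} are tailor-made for this translation: their parameters $\ell$ and $i$ track precisely the stage at which each $v_i$ joined the clique and how many of its new incidences fell below the heavy threshold, and each regime of $(\ell,i)$ will feed into exactly one of the two packing criteria. Once a packing is established in every regime, Lemma~\ref{le:Hall} produces the forbidden multicolored copy of $H$, contradicting the hypothesis that $G$ is multicolored-$H$-free and completing the proof of the key claim, and hence of the theorem.
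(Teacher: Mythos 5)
Your opening reduction via Lemma~\ref{le:nested_degree}, and the deduction of the theorem from $(r-1)$-partiteness, are both correct. The proof of the key claim, however, has a genuine gap. You propose a direct contradiction: assume a $K_r$ in the underlying graph, select $v_1,\ldots,v_r$ greedily via Lemma~\ref{le:vertices} so that $F^+ = G[\{v_1,\ldots,v_r\}]$ has controlled multiplicities, and embed $H$ there by packing. There are two problems. First, Lemma~\ref{le:vertices} controls only the \emph{sum} $\sum_{j<i} w_G(v_iv_j)$; the resulting pointwise bound, as in the paper's Inequality~(\ref{eq:min_i}), is $\min_{j<i} w_G(v_iv_j) \geq k\tfrac{r-i}{r-1}$, which is positive for $i\leq r-1$ but degenerates to $0$ at $i=r$. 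So you cannot even guarantee the greedy $r$-th vertex is adjacent in the underlying graph to all earlier ones, let alone bound its light edges well enough to pack. Second, and more structurally, the paper never embeds $H$ into a clique found inside $G$. Instead it shows that $F = G[\{v_1,\ldots,v_{r-1}\}]$ is $H$-\emph{friendly} in the sense of Definition~\ref{def:H-friendly}: a multicolored $H$ appears in $F^+$ for \emph{every} abstract vertex $v_r$ satisfying $\sum_i w(v_rv_i) \geq (r-2)k$ and $w(v_rv_i)\geq 1$ for all $i$. That universal quantifier over $v_r$ --- in particular, $v_r$ may carry a multiplicity-$1$ edge --- is precisely what forces the critical edge of $H$ to be sent to that lightest edge, leads to the forest $T^+$, and drives the two-case analysis. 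It is then Lemma~\ref{le:partite}, absent from your proposal, that converts $H$-friendliness of the small induced subgraph $F$ into $(r-1)$-partiteness of all of $G$. A local embedding into one particular $r$-vertex sub-multigraph cannot by itself rule out other copies of $K_r$ in the underlying graph with unfavorable multiplicity profiles, so you need some stability bridge like Lemma~\ref{le:partite} to get the global conclusion.

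A secondary point you gloss over: the subgraph $F$ is built in two stages --- a core $K_{r-s}$ with $s=\lceil r/2\rceil$ and all multiplicities at least $h-\tfrac{r(r-2)}{4}+1$ via Claim~\ref{cl:Kr-s}, followed by a greedy extension by $v_{r-s+1},\ldots,v_{r-1}$. The light edges introduced at each extension step form the forest $T$, and it is this very specific structure (not a generic ``case analysis on how the heavy clique was built'') that makes the Sauer--Spencer and Bollob\'as--Eldridge packing hypotheses verifiable via Lemmas~\ref{le:calculation-1} and~\ref{le:calculation-2}.
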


The following lemma is crucial in the proofs of Theorem~\ref{th:r_vertex} and a technical lemma in Section~\ref{sec:any_vertex} (i.e., Lemma~\ref{le:r_vertex-stability}).

\begin{lemma}\label{le:r_vertex-degree}
Let $r\geq 5$, $k>\max\big\{\frac{r-1}{r-2}(h-1), 2\frac{r-1}{r}\big(h-\frac{r(r-2)}{4}+1\big)\big\}$ and $0 \ll \frac{1}{n} \ll \delta \ll \frac{1}{k}$.
Let $H$ be an $r$-vertex $r$-color-critical multigraph with $h$ edges, and $G$ be an $n$-vertex simply $k$-colored multicolored-$H$-free multigraph with $\delta(G)\geq (1-\delta)k\delta(T_{r-1}(n))$. Then $G$ is $(r-1)$-partite.
\end{lemma}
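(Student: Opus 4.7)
The plan is to argue by contradiction: assume $G$ is not $(r-1)$-partite and exhibit a multicolored copy of $H$ inside $G$, contradicting the hypothesis. A useful first observation is that since $H$ is an $r$-chromatic multigraph on $r$ vertices, its underlying simple graph is $K_r$; an edge $e$ of $H$ is critical exactly when $w_H(e)=1$, and any embedding of $H$ into an $r$-vertex sub-multigraph must land on a sub-multigraph whose underlying graph is $K_r$ and whose edge multiplicities are compatible with those of $H$.

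First I would localize the problem to an $r$-vertex induced sub-multigraph $F^+$ of $G$. Using the minimum-degree bound $\delta(G)\ge(1-\delta)k\delta(T_{r-1}(n))$ and an iterated application of Lemma~\ref{le:vertices}, one builds a sequence $v_1,\dots,v_r$ of vertices such that the resulting $F^+:=G[\{v_1,\dots,v_r\}]$ has underlying graph $K_r$; the non-$(r-1)$-partiteness of $G$ is exploited (by seeding the iteration from an edge or a short odd structure) to force a strict surplus of total edge multiplicity beyond $k\cdot t_{r-1}(r)=k\big(\binom{r}{2}-1\big)$. This surplus is the slack that drives the next step, and it is where the qualitative statement ``$G$ is not $(r-1)$-partite'' is converted into a quantitative excess.

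Next, I would embed $H$ into $F^+$ via a packing argument. Fix appropriate thresholds and set $E_1\subseteq E(F^+)$ to be the edges of $F^+$ of low multiplicity and $E_2\subseteq E(H)$ to be the heavy edges of $H$. A bijection $\phi\colon V(H)\to V(F^+)$ that places no edge of $E_2$ onto an edge of $E_1$ is precisely a packing of the simple graphs $F^+[E_1]$ and $H[E_2]$; once such a $\phi$ exists, every heavy edge of $H$ sits on a high-multiplicity edge of $F^+$, so $\phi$ is a valid embedding of multigraphs, and enumerating the edges of $H$ in order of increasing $w_H$ verifies the cumulative inequality of Lemma~\ref{le:Hall} and produces a multicolored copy of $H$. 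To confirm the packing hypothesis one bounds $|E_1|$ using the surplus from the previous step and bounds $|E_2|$ using the multiplicity distribution on $H$ (parametrized by integers $\ell$, $i$ and $s=\lceil r/2\rceil$ exactly as in Lemmas~\ref{le:calculation-1} and~\ref{le:calculation-2}), and checks that in each parameter regime one of
\[
|E_1|\cdot|E_2|<\binom{r}{2}\qquad\text{(Theorem~\ref{th:packing-prod})}
\]
or $|E_1|+|E_2|\le\tfrac{3}{2}(r-1)$ (Theorem~\ref{th:packing-sum}) holds. The hypothesis $k>2\tfrac{r-1}{r}\big(h-\tfrac{r(r-2)}{4}+1\big)$ is tuned precisely so these inequalities close in every regime, with Lemmas~\ref{le:calculation-1} and~\ref{le:calculation-2} absorbing the losses from the multiplicity bookkeeping.

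The main obstacle is this packing verification: the case analysis must be carried out over all multiplicity profiles for the heavy edges of $H$ and the low edges of $F^+$, and Lemmas~\ref{le:calculation-1} and~\ref{le:calculation-2} are calibrated to balance the terms in each case. A secondary difficulty is quantifying the ``surplus'' in the construction of $F^+$, which requires combining the minimum-degree hypothesis, the iterated Lemma~\ref{le:vertices} argument, and a careful choice of starting seed so that the excess multiplicity over $k\cdot t_{r-1}(r)$ is large enough to control $|E_1|$ in the packing inequality.
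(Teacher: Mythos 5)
Your proposal captures the central innovation of the proof — a graph‑packing argument that matches the low‑multiplicity edges of a small host multigraph against the heavy edges of $H$, calibrated through Lemmas~\ref{le:calculation-1} and~\ref{le:calculation-2} and checked against Theorems~\ref{th:packing-prod} and~\ref{th:packing-sum} — but the framing of the argument has a genuine gap.

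The paper does not argue by contradiction, and it never assumes $G$ fails to be $(r-1)$-partite. The actual route is: extract an $(r-1)$-vertex induced sub-multigraph $F=G[\{v_1,\dots,v_{r-1}\}]$ (seeded by a copy of $K_{r-s}^{(h-r(r-2)/4+1)}$ found by a Tur\'an-type edge count — Claim~\ref{cl:Kr-s} — then extended by Lemma~\ref{le:vertices}), and then prove $F$ is $H$-friendly in the sense of Definition~\ref{def:H-friendly}. Crucially, in that argument the vertex $v_r$ is \emph{not} a vertex of $G$ and $F^+$ is \emph{not} an induced subgraph of $G$: $F^+$ is an adversarial, hypothetical one-vertex extension of $F$ with only weak degree constraints (each $d_{W_i}(v_r)\geq 1$ and total degree $\geq (r-2)k$), and one must show \emph{every} such extension contains a multicolored $H$. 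Once $F$ is shown $H$-friendly, Lemma~\ref{le:partite} — an imported stability result — directly yields that $G$ is $(r-1)$-partite. Your plan to locate $v_r\in V(G)$ and ``seed the iteration from an edge or a short odd structure'' cannot work as written: if $G$ were exactly $k\cdot T_{r-1}(n)$, no vertex of $G$ completes $v_1,\dots,v_{r-1}$ (taken across the parts) to a full $K_r$, so Lemma~\ref{le:vertices} alone does not produce an $r$-th vertex adjacent to all of them, and the mechanism that turns ``not $(r-1)$-partite'' into a usable surplus is exactly the content of Lemma~\ref{le:partite}, which you would be reproving without saying how. Second, the packing step is more delicate than ``pack $F^+[E_1]$ against $H[E_2]$'': the paper packs specific \emph{forests} $T,T^+$, built from edge-wise minima within $F^+$, against the heavy-edge graph of $H$ restricted to a complement vertex set, with a case split on $w_{F^+}(e_1)$ (your plan has no analogue of Case~1, Claim~\ref{cl:m'=1}, or the tree $T_1,T_2$ decomposition), and must additionally reserve a critical edge of $H$ to be placed on the unique low-multiplicity slot $e_1$; the packing bounds are taken on $\binom{|V(H')|}{2}$ with $|V(H')|<r$, not $\binom{r}{2}$. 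So you have the right key idea but the wrong scaffolding: without Lemma~\ref{le:partite} and the $H$-friendly formulation, the step from non-bipartiteness to a dense $K_r$ is unjustified, and without the forest decomposition and the critical-edge bookkeeping, the packing verification does not close.
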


\noindent {\bf Proof of Theorem~\ref{th:r_vertex} (assuming Lemma~\ref{le:r_vertex-degree}).} Let $G$ be an $n$-vertex simply $k$-colored multicolored-$H$-free multigraph with $e(G)\geq k\cdot t_{r-1}(n)$.
We shall show that $G=k\cdot T_{r-1}(n)$.
By Lemma~\ref{le:nested_degree}, we may assume that $\delta(G)\geq k\delta(T_{r-1}(n))$.
By Lemma~\ref{le:r_vertex-degree}, $G$ is $(r-1)$-partite.
Since $T_{r-1}(n)$ is the unique $(r-1)$-partite graph on $n$ vertices with $t_{r-1}(n)$ edges, we have $e(G)\leq k\cdot t_{r-1}(n)$.
Thus $e(G)= k\cdot t_{r-1}(n)$ and $G=k\cdot T_{r-1}(n)$.
The proof is complete.
\hfill$\square$
\vspace{0.2cm}

In the proof of Lemma~\ref{le:r_vertex-degree}, we will repeatedly use the following simple observation.

\begin{observation}\label{ob:r vertex}
Let $H$ be an $r$-vertex $r$-color-critical multigraph with $h$ edges. Then
\begin{itemize}
\item[{\rm (i)}] $w_H(e)\geq 1$ for every $e\in {V(H)\choose 2}$,
\item[{\rm (ii)}] $w_H(E)\leq h-\left({r\choose 2}-|E|\right)$ for any $E\subseteq {V(H)\choose 2}$, and
\item[{\rm (iii)}] $d_H(v)\leq h-{r-1 \choose 2}$ for any vertex $v\in V(H)$.
\end{itemize}
\end{observation}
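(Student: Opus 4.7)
The plan is to deduce all three statements from a single structural fact: the underlying graph of $H$ must be exactly $K_r$. First I would argue part (i). Since $H$ is $r$-color-critical, its underlying graph has chromatic number $r$ while being on only $r$ vertices. A graph on $r$ vertices with chromatic number $r$ must be $K_r$ (if two vertices were non-adjacent, identifying them would yield a graph on $r-1$ vertices whose proper coloring lifts to a proper $(r-1)$-coloring of the original, contradicting $\chi = r$). Hence every $e \in \binom{V(H)}{2}$ satisfies $w_H(e) \geq 1$.

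Next, for part (ii), I would use (i) as a lower bound on the complementary edge set. Since $w_H$ is additive across $\binom{V(H)}{2}$, we have
\[
h = \sum_{e \in \binom{V(H)}{2}} w_H(e) = w_H(E) + w_H\!\left(\binom{V(H)}{2}\setminus E\right),
\]
and by (i) the second summand is at least $\binom{r}{2} - |E|$. Rearranging gives $w_H(E) \leq h - \binom{r}{2} + |E|$, which is the desired bound.

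Finally, for part (iii), I would split the total edge count according to whether an edge is incident with a fixed vertex $v$. Writing $h = d_H(v) + w_H(E(H-v))$ and applying (i) to each of the $\binom{r-1}{2}$ pairs inside $V(H)\setminus\{v\}$ yields $w_H(E(H-v)) \geq \binom{r-1}{2}$, hence $d_H(v) \leq h - \binom{r-1}{2}$.

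There is essentially no obstacle here: the only non-routine input is the observation that an $r$-chromatic graph on $r$ vertices must be $K_r$, and once that is in hand, parts (ii) and (iii) are immediate edge-counting consequences. The observation is really a bookkeeping tool to be invoked repeatedly in the proof of Lemma~\ref{le:r_vertex-degree}, so brevity is appropriate.
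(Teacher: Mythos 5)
Your proof is correct. The paper states Observation~\ref{ob:r vertex} without giving a proof, treating it as immediate, and your argument is exactly the elementary verification that the authors have in mind: the underlying graph of an $r$-vertex $r$-chromatic multigraph must be $K_r$ (which gives (i)), and then (ii) and (iii) follow by subtracting the guaranteed multiplicity-1 contribution of the complementary edges. One small remark: (iii) is literally the special case of (ii) with $E$ taken to be the $r-1$ edges incident to $v$, since $h - \bigl(\binom{r}{2} - (r-1)\bigr) = h - \binom{r-1}{2}$, so you could have derived it in one line from (ii) rather than re-running the edge count; but what you wrote is equally valid.
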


We now devote the rest of this section to the proof of Lemma~\ref{le:r_vertex-degree}.
\vspace{0.2cm}

\noindent {\bf Proof of Lemma~\ref{le:r_vertex-degree}.} Since $\delta(G)\geq (1-\delta)k\delta(T_{r-1}(n))$, we have $\delta(G)\geq (1-\delta)k\frac{r-2}{r-1}(n-1)$ by Inequality~(\ref{eq:degree_T_r-1}).
Thus $e(G)\geq \frac{1}{2}n\delta(G)\geq \frac{1}{2}n(1-\delta )k\frac{r-2}{r-1}(n-1)=(1-\delta)k \frac{r-2}{r-1}\binom{n}{2}.$
Since $H$ is an $r$-vertex $r$-color-critical multigraph, we have $h\geq {r\choose 2}$ by Observation~\ref{ob:r vertex} (i).
Let $s\colonequals \left\lceil\frac{r}{2}\right\rceil$.
Let $K_{r-s}^{(h-r(r-2)/4+1)}$ be a complete multigraph on $r-s$ vertices in which every edge has multiplicity at least $h-\frac{r(r-2)}{4}+1$.

\begin{claim}\label{cl:Kr-s}
$G$ contains $K_{r-s}^{(h-r(r-2)/4+1)}$ as a subgraph.
\end{claim}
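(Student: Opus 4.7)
The plan is to reduce the claim to a Tur\'an-type argument on an auxiliary simple graph. Set $\alpha := h - \tfrac{r(r-2)}{4}+1$ and call a pair $uv \in \binom{V(G)}{2}$ \emph{heavy} if $w_G(uv) \ge \alpha$. Let $G'$ denote the simple graph on $V(G)$ whose edges are the heavy pairs. Since $r - s = \lfloor r/2\rfloor$, any $K_{r-s}$ in $G'$ directly realizes the desired $K_{r-s}^{(h - r(r-2)/4+1)}$ in $G$, so the whole task reduces to exhibiting $K_{r-s}\subseteq G'$.

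To carry this out, I would first lower-bound the heavy-degree $d_{G'}(v)$ of each vertex. Splitting the sum $d_G(v) = \sum_u w_G(uv)$ into heavy and light contributions (bounded by $k$ and $\alpha - 1$ respectively) yields
$$d_G(v) \le k\, d_{G'}(v) + (\alpha - 1)\bigl(n - 1 - d_{G'}(v)\bigr),$$
so combining with $d_G(v) \ge (1-\delta) k\tfrac{r-2}{r-1}(n-1)$ gives
$$d_{G'}(v) \;\ge\; (n-1)\cdot \frac{(1-\delta)k\tfrac{r-2}{r-1} - (\alpha-1)}{k - \alpha + 1}.$$
The arithmetic heart of the argument is the observation that the hypothesis $k > 2\tfrac{r-1}{r}\alpha$ (equivalently, $\alpha < \tfrac{r}{2(r-1)}k$) forces this ratio to strictly exceed $\tfrac{r-4}{r-2}$. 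Since $k$ is an integer, $\alpha$ is a fixed rational with denominator dividing $4$, and $r,h$ are fixed while $n\to\infty$, the strictness of the hypothesis produces a constant $\eta = \eta(r,h) > 0$ such that $d_{G'}(v) \ge (\tfrac{r-4}{r-2} + \eta)(n-1)$ once $\delta \ll 1/k$. A quick case check on the parity of $r$ then verifies the elementary inequality $\tfrac{r-4}{r-2} \ge \tfrac{r-s-2}{r-s-1}$ for every $r\ge 5$ (with equality iff $r$ is even), so $e(G') \ge (\tfrac{r-s-2}{r-s-1} + \eta)\binom{n}{2}$, which strictly exceeds $t_{r-s-1}(n)$ for all sufficiently large $n$ by Inequality~(\ref{eq:t_r-1}). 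Tur\'an's theorem then supplies the required $K_{r-s}$ in $G'$, completing the argument.

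The hard part will be the boundary case of even $r$, where $\tfrac{r-4}{r-2}$ equals the Tur\'an ratio $\tfrac{r-s-2}{r-s-1}$ exactly: merely obtaining the asymptotic bound $\tfrac{r-4}{r-2}$ on $d_{G'}(v)/(n-1)$ would only match the Tur\'an threshold in the limit, not beat it. Overcoming this forces one to quantify the strict inequality $k > 2\tfrac{r-1}{r}\alpha$, squeezing out a lower bound $k - 2\tfrac{r-1}{r}\alpha \ge \eta_0(r,h) > 0$ from the integrality of $k$ and the bounded denominator of $\alpha$; propagating this through the ratio above produces $\eta$ of order $\Theta(1/k)$, which is precisely why the lemma is stated with the hierarchy $0 \ll 1/n \ll \delta \ll 1/k$ rather than with $\delta$ comparable to $1/k$.
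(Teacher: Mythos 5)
Your approach is essentially the same as the paper's: both isolate the heavy pairs $\{uv : w_G(uv)\ge \alpha\}$ with $\alpha=h-\frac{r(r-2)}{4}+1$, bound light contributions using integrality, and deduce that the heavy graph exceeds the Tur\'an threshold for $K_{r-s}$. The only packaging difference is that you argue per-vertex (lower-bounding the heavy-degree of every vertex) and then sum, whereas the paper sums over edges directly ($e(G)\le kx+(\text{light bound})(\binom{n}{2}-x)$ where $x$ is the number of heavy pairs); the two routes lead to the same inequality.

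There is, however, a small but real slip in your light-edge bound. You claim a light pair has multiplicity at most $\alpha-1$, but this fails when $r$ is odd. Writing $r=2m+1$, one has $\frac{r(r-2)}{4}=m^2-\frac14$, hence $\alpha=h-m^2+\frac54$ has fractional part $\frac14$; an integer strictly below $\alpha$ can be as large as $\lfloor\alpha\rfloor=\alpha-\frac14$, which exceeds $\alpha-1$. So the inequality $d_G(v)\le k\,d_{G'}(v)+(\alpha-1)(n-1-d_{G'}(v))$ you assert is false in general. The correct uniform bound, and the one the paper uses, is $w_G(e)\le\alpha-\frac14=h-\frac{r^2-2r-3}{4}$ (tight for odd $r$, slightly wasteful for even $r$). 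Redoing your cross-multiplication with $\alpha-\frac14$ in place of $\alpha-1$, the condition for the ratio to exceed $\frac{r-4}{r-2}$ becomes $k>2\frac{r-1}{r}\bigl(\alpha-\frac14\bigr)$, which is still weaker than the hypothesis $k>2\frac{r-1}{r}\alpha$, so the argument goes through; but the slack of order $\Theta(1/k)$ you need in the even-$r$ boundary case is supplied primarily by this $\frac14$ integrality gap, not by ``squeezing'' the strict inequality on $k$ as you suggest in your final paragraph.
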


\begin{proof}
Let $X\colonequals \big\{e\in{V(G)\choose 2}\colon\, w_G(e)\geq h-\frac{r(r-2)}{4}+1\big\}$ and $x\colonequals |X|$.
It suffices to show that $x>\ex(n,K_{r-s})$.
Note that the multiplicity of an edge is a nonnegative integer, so for any $e\in {V(G)\choose 2}\setminus X$, we have $w_G(e)\leq h-\frac{r(r-2)}{4}+1-\frac{1}{4}=h-\frac{r^2-2r-3}{4}$.
Then
\begin{equation*}
(1-\delta)k \frac{r-2}{r-1}\binom{n}{2}\leq e(G)\leq kx+\left(h-\frac{r^2-2r-3}{4}\right)\left(\binom{n}{2}-x\right),
\end{equation*}
and thus
\begin{equation*}
x\geq \frac{1}{k-(h-(r^2-2r-3)/4)}\left((1-\delta)k\frac{r-2}{r-1}-\left(h-\frac{r^2-2r-3}{4}\right)\right)\binom{n}{2}.
\end{equation*}
Since $k>2\frac{r-1}{r}\left(h-\frac{r(r-2)}{4}+1\right)$, $0 \ll \frac{1}{n} \ll \delta \ll \frac{1}{k}$ and $s=\left\lceil\frac{r}{2}\right\rceil \geq \frac{r}{2}$, we have
\begin{equation*}
 \begin{aligned}
  &~(1-\delta)k\frac{r-2}{r-1}-\left(h-\frac{r^2-2r-3}{4}\right)-\left(k-\left(h-\frac{r^2-2r-3}{4}\right)\right)\frac{r-s-2}{r-s-1}\\
  =&~\frac{s}{(r-1)(r-s-1)}k-\delta\frac{r-2}{r-1}k-\frac{1}{r-s-1}\left(h-\frac{r^2-2r-3}{4}\right)\\
  > &~\frac{r/2}{(r-1)(r-s-1)}2\frac{r-1}{r}\left(h-\frac{r(r-2)}{4}+1\right)- \delta\frac{r-2}{r-1}k- \frac{1}{r-s-1}\left(h-\frac{r^2-2r-3}{4}\right)\\
  =&~\frac{1}{4(r-s-1)}-\delta\frac{r-2}{r-1}k ~\geq \frac{1}{5(r-s-1)}.
 \end{aligned}
\end{equation*}
Thus $x\geq \frac{r-s-2}{r-s-1}\binom{n}{2}+\frac{1}{5(r-s-1)(k-(h-(r^2-2r-3)/4))}{n\choose 2}$.
Note that $r-s\geq r-\frac{r+1}{2}\geq 2$ since $r\geq 5$.
By Inequality~(\ref{eq:t_r-1}), we have $\ex(n,K_{r-s})=\frac{r-s-2}{r-s-1}\binom{n}{2}+O(n)$.
Then $x>\ex(n,K_{r-s})$.
The result follows.
\end{proof}

By Claim \ref{cl:Kr-s}, we may assume that $K$ is a copy of $K_{r-s}^{(h-r(r-2)/4+1)}$ in $G$ with $V(K)=\{v_1, v_2, \ldots, v_{r-s}\}$.
Recall that $\delta(G)\geq (1-\delta)k\frac{r-2}{r-1}(n-1)$.
Using Lemma~\ref{le:vertices} iteratively, we can find vertices $v_{r-s+1}, \ldots, v_{r-1}\in V(G)\setminus V(K)$ such that for each $r-s+1 \leq i \leq r-1$, we have $\sum_{j\in [i-1]} w_G(v_iv_j)\geq k\frac{r-2}{r-1}(i-1).$
Since $k>\max\big\{\frac{r-1}{r-2}(h-1), 2\frac{r-1}{r}\big(h-\frac{r(r-2)}{4}+1\big)\big\}$, for each $r-s+1\leq i\leq r-1$ and $2\leq \ell\leq i-2$, we have
\begin{align}\label{eq:min_i}
 ~&~\min \{w_G(v_iv_j)\colon\, 1\leq j\leq i-1\} \nonumber\\
 \geq &~\sum\nolimits_{j\in[i-1]}w_G(v_iv_j)-(i-2)k ~\geq k\frac{r-2}{r-1}(i-1)-(i-2)k ~= k\frac{r-i}{r-1} \nonumber\\
 >&~2\frac{r-1}{r}\left(h-\frac{r(r-2)}{4}+1\right)\frac{r-i}{r-1} ~= (r-i)\frac{2}{r}\left(h-\frac{r(r-2)}{4}+1\right),
\end{align}
\begin{align}\label{eq:min2_i}
 ~&~\min\nolimits_2\{w_G(v_iv_j)\colon\, 1\leq j\leq i-1\} \nonumber\\
 \geq &~\frac{1}{2}\left(\sum\nolimits_{j\in[i-1]}w_G(v_iv_j)-(i-3)k\right) ~\geq \frac{1}{2}\left(k\frac{r-2}{r-1}(i-1)-(i-3)k\right) \nonumber\\
 = &~\frac{1}{2}k\frac{2r-i-1}{r-1} ~> \frac{2r-i-1}{r}\left(h-\frac{r(r-2)}{4}+1\right) \nonumber\\
 \geq &~\frac{2r-(r-1)-1}{r}\left(h-\frac{r(r-2)}{4}+1\right) ~= h-\frac{r(r-2)}{4}+1,
\end{align}
\begin{align}\label{eq:max_i}
 ~&~\max\{w_G(v_iv_j)\colon\, 1\leq j\leq i-1\} \nonumber\\
 \geq &~\frac{1}{i-1}\sum\nolimits_{j\in[i-1]}w_G(v_iv_j) ~\geq \frac{1}{i-1}k\frac{r-2}{r-1}(i-1) ~> \frac{r-1}{r-2}(h-1)\frac{r-2}{r-1} ~= h-1,
\end{align}
and
\begin{align}\label{eq:maxl_i}
 ~&~\max\nolimits_{\ell}\{w_G(v_iv_j)\colon\, 1\leq j\leq i-1\}\nonumber\\
 \geq &~\frac{1}{i-\ell}\left(\sum\nolimits_{j\in[i-1]}w_G(v_iv_j)-(\ell-1)k\right) ~\geq \frac{1}{i-\ell}\left(k\frac{r-2}{r-1}(i-1)-(\ell-1)k\right) \nonumber\\
 = &~k\frac{1}{i-\ell}\cdot \frac{(r-2)(i-1)-(r-1)(\ell-1)}{r-1} ~= k\left(\frac{r-2}{r-1}-\frac{\ell-1}{(r-1)(i-\ell)}\right) \nonumber\\
 > &~2\frac{r-1}{r}\left(h-\frac{r(r-2)}{4}+1\right)\left(\frac{r-2}{r-1}-\frac{\ell-1}{(r-1)(i-\ell)}\right) \nonumber\\
 = &~\frac{2}{r}\left(r-2-\frac{\ell-1}{i-\ell}\right)\left(h-\frac{r(r-2)}{4}+1\right).
\end{align}
By Inequality~(\ref{eq:maxl_i}) and Lemma~\ref{le:calculation-1}, we have
\begin{equation}\label{eq:maxl_i+}
\max\nolimits_{\ell}\{w_G(v_iv_j)\colon\, 1\leq j\leq i-1\}\geq h-((\ell-1)s+r-i)
\end{equation}
when (i) $2\leq \ell\leq s-2$ and $r-s+1\leq i\leq r-1$, or (ii) $\ell=s-1$ and $r-s+2\leq i\leq r-1$.

Let $F\colonequals G[\{v_1, \ldots, v_{r-1}\}]$.
\textbf{\boldmath  The remainder of the proof is devoted to showing that $F$ is $H$-friendly, } which together with Lemma~\ref{le:partite} implies that $G$ is $(r-1)$-partite.
Let $F^{+}$ be the multigraph obtained by adding a new vertex $v_r$ to $F$ and adding edges incident to $v_r$ of multiplicity at most $k$ such that $\sum_{i\in[r-1]}w_{F^+}(v_rv_i)\geq (r-2)k$ and $w_{F^+}(v_rv_i)\geq 1$ for all $i\in [r-1]$.
It suffices to show that $F^{+}$ contains a multicolored copy of $H$.
Note that $\min\{w_{F^+}(v_rv_i)\colon\, 1\leq i\leq r-1\}\geq 1.$
Since $r\geq 5$ and $k>\max\big\{\frac{r-1}{r-2}(h-1), 2\frac{r-1}{r}\big(h-\frac{r(r-2)}{4}+1\big)\big\}$, we have
\begin{align}\label{eq:min2_r}
 ~&~\min\nolimits_2\{w_{F^+}(v_rv_i)\colon\, 1\leq i\leq r-1\} \nonumber\\
 \geq &~\frac{1}{2}\left(\sum\nolimits_{i\in[r-1]}w_{F^+}(v_rv_i)-(r-3)k\right) ~\geq \frac{1}{2}((r-2)k-(r-3)k) \nonumber\\
 = &~\frac{k}{2} ~> \frac{r-1}{r}\left(h-\frac{r(r-2)}{4}+1\right),
\end{align}
\begin{align}\label{eq:min3_r}
 ~&~\min\nolimits_3\{w_{F^+}(v_rv_i)\colon\, 1\leq i\leq r-1\} \nonumber\\
 \geq &~\frac{1}{3}\left(\sum\nolimits_{i\in[r-1]}w_{F^+}(v_rv_i)-(r-4)k\right) ~\geq \frac{1}{3}((r-2)k-(r-4)k) \nonumber\\
 = &~\frac{2}{3}k ~> \frac{4}{3}\frac{r-1}{r}\left(h-\frac{r(r-2)}{4}+1\right) ~> h-\frac{r(r-2)}{4}+1,
\end{align}
\begin{align}\label{eq:max_r}
 ~&~\max\{w_{F^+}(v_rv_i)\colon\,1\leq i\leq r-1\} \nonumber\\
 \geq &~\frac{1}{r-1}\sum\nolimits_{i\in[r-1]}w_{F^+}(v_rv_i) ~\geq \frac{r-2}{r-1}k ~> \frac{r-2}{r-1}\cdot \frac{r-1}{r-2}(h-1) =~h-1,
\end{align}
and for $2\leq \ell \leq s-1$,
\begin{align}\label{eq:maxl_r}
 ~&~\max\nolimits_{\ell}\{w_{F^+}(v_rv_i)\colon\, 1\leq i\leq r-1\}\nonumber\\
 \geq &~\frac{1}{r-\ell}\left(\sum\nolimits_{i\in[r-1]}w_{F^+}(v_rv_i)-(\ell-1)k\right) ~\geq \frac{1}{r-\ell}((r-2)k-(\ell-1)k) \nonumber\\
 = &~\frac{r-\ell-1}{r-\ell}k ~> 2\frac{r-\ell-1}{r-\ell}\cdot \frac{r-1}{r}\left(h-\frac{r(r-2)}{4}+1\right).
\end{align}
Since the multiplicity of each edge is an integer, Inequality~(\ref{eq:max_r}) implies that
\begin{equation}\label{eq:max_r+}
\max\{w_{F^+}(v_rv_i)\colon\, 1\leq i\leq r-1\}\geq h.
\end{equation}
Moreover, recall that $h\geq {r\choose 2}$, so by Inequality~(\ref{eq:maxl_r}) and Lemma~\ref{le:calculation-2} we have
\begin{equation}\label{eq:maxl_r+}
\max\nolimits_{\ell}\{w_{F^+}(v_rv_i)\colon\, 1\leq i\leq r-1\}\geq h-(\ell-1)s
\end{equation}
when $2\leq \ell \leq s-1$.

For each $i\in [s]$, let $e_i$ be an edge in $\{v_{r-i+1}v_j\colon\, 1\leq j\leq r-i\}$ with $w_{F^{+}}(e_i)=\min\{w_{F^{+}}(v_{r-i+1}v_j)\colon\, 1\leq j\leq r-i\}$ (if there are more than one choice for $e_i$, we choose an arbitrary one). Then $w_{F^+}(e_1)\geq 1$ and
\begin{equation}\label{eq:e_i}
w_{F^+}(e_i)\geq (i-1)\frac{2}{r}\left(h-\frac{r(r-2)}{4}+1\right)
\end{equation}
for $2\leq i\leq s$ by Inequality~(\ref{eq:min_i}).
Let $T$ be the simple graph induced by the set of edges $\{e_1, \ldots, e_s\}$.
Note that for each $i\in [s-1]$, $e_i$ has at most one end-vertex in $T[\{e_{i+1}, \ldots, e_{s}\}]$.
Moreover, $T$ is a forest on at most $r$ vertices and $T$ has $s$ edges.
Let $e_{s+1}$ be an edge in $\{v_rv_j \colon\, 1\leq j\leq r-1\}$ with $w_{F^+}(e_{s+1})=\min_2\{w_{F^+}(v_rv_j)\colon\, 1\leq j\leq r-1\}$ (if there are more than one choice for $e_{s+1}$, we choose an arbitrary one, but it must be different from $e_1$).
Let $T^+$ be the simple graph induced by edges $\{e_1, \ldots, e_s, e_{s+1}\}$.
In other words, $T^{+}$ is the graph obtained from $T$ by adding the edge $e_{s+1}$ and its end-vertices (note that some of these two vertices may already be in $T$, and we do not add such vertices repeatedly).

\begin{claim}\label{cl:embedding}
There is an ordering $e_{s+2}, e_{s+3}, \ldots, e_{{r\choose 2}}$ of the edges in ${V(F^{+})\choose 2}\setminus E(T^{+})$ such that for each $j\in\{s+2, s+3, \ldots, {r\choose 2}\}$, we have $w_{F^{+}}(e_j)\geq h-\big({r\choose 2}-j\big)$.
\end{claim}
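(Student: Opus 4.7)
The plan is to rank the $m := \binom{r}{2} - s - 1$ free edges in $\binom{V(F^+)}{2}\setminus E(T^+)$ in decreasing order of multiplicity, setting $e_{\binom{r}{2}}$ to be the largest, $e_{\binom{r}{2}-1}$ the second largest, and so on down to $e_{s+2}$. Writing $k := \binom{r}{2} - j + 1 \in [1, m]$, the desired bound $w_{F^+}(e_j) \geq h - (\binom{r}{2}-j)$ is equivalent to the statement that the $k$-th largest multiplicity among free edges is at least $h - k + 1$ for every $k \in [1, m]$. So the task reduces to exhibiting, for each such $k$, at least $k$ free edges each of multiplicity at least $h - k + 1$.

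I will handle this in two ranges. For the tail range $k \geq \lceil r(r-2)/4\rceil$, I claim the uniform bound $w_{F^+}(e) \geq h - r(r-2)/4 + 1$ holds for every free edge $e$: the edges of $K$ inherit this from Claim~\ref{cl:Kr-s}; every free edge at $v_r$ (which lies above the two smallest edges $e_1, e_{s+1}$ from $v_r$) satisfies it via (\ref{eq:min3_r}); and every free edge at $v_{r-i+1}$ for $2\leq i\leq s$ (lying above the minimum $e_i$) via (\ref{eq:min2_i}). Then $h - r(r-2)/4 + 1 \geq h - k + 1$, so all $m \geq k$ free edges qualify. For the head range $k < \lceil r(r-2)/4\rceil$, I will construct $s^2 - s - 1$ distinguished free edges as follows. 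For each pair $(\ell, p)$ with $1 \leq \ell \leq s-1$, $1 \leq p \leq s$, and $(\ell, p) \neq (s-1, s)$, let $E_{\ell, p}$ be an edge realizing the $\ell$-th largest multiplicity in $\{v_{r-p+1} v_j : 1\leq j\leq r-p\}$. These edges are pairwise distinct (edges at different vertices $v_{r-p+1}$ lie in disjoint ``layers'' of $F^+$, while within a layer distinct ranks select distinct edges) and they all avoid $T^+$ (the only $T^+$-edges in the layer at $v_r$ are the two smallest $e_1, e_{s+1}$, and in every other layer only the smallest $e_p$). Applying (\ref{eq:max_r+}), (\ref{eq:max_i}), (\ref{eq:maxl_r+}), and (\ref{eq:maxl_i+}) in the four relevant sub-cases yields $w_{F^+}(E_{\ell, p}) \geq h - ((\ell-1)s + p - 1)$. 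Assigning $E_{\ell, p}$ to the ``virtual position'' $(\ell - 1)s + p$, the attained virtual positions exhaust $\{1, 2, \ldots, s^2 - s - 1\}$, with the single excluded pair $(s-1, s)$ corresponding to the extreme position $s^2 - s$, and the bound at each attained position $q$ is $\geq h - q + 1$.

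Combining: for $k \leq s^2 - s - 1$ the distinguished edges sitting at virtual positions $1, \ldots, k$ furnish $k$ free edges each of multiplicity $\geq h - k + 1$; for $k \geq \lceil r(r-2)/4\rceil$ the uniform bound already furnishes all $m \geq k$ free edges with the required multiplicity. Since $\lceil r(r-2)/4\rceil \leq s^2 - s$ (with equality when $r$ is even and slack of size $s-1$ when $r$ is odd, since $s = \lceil r/2\rceil$), the two ranges cover all of $[1, m]$ without a gap, and hence the sorted-by-multiplicity ordering delivers the enumeration required by the claim. The main technical subtlety — and the most delicate part of the argument — is precisely the matching of the applicability conditions of (\ref{eq:maxl_r+}) and (\ref{eq:maxl_i+}) against the pairs $(\ell, p)$ I use: it is this matching that forces the exclusion of the single pair $(s-1, s)$ and aligns the distinguished-edge bound $h - ((\ell-1)s + p - 1)$ with the required threshold $h - ((\ell-1)s + p) + 1$ exactly.
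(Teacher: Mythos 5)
Your proposal is correct and follows essentially the same strategy as the paper's own proof: in both, the $s^2-s-1$ edges $e_{i,\ell}$ with $(\ell,p)\neq(s-1,s)$ (where $p=r-i+1$) are assigned to the top positions using $(\ref{eq:max_r+})$, $(\ref{eq:max_i})$, $(\ref{eq:maxl_r+})$, $(\ref{eq:maxl_i+})$, the bound $h-((\ell-1)s+p-1)$ is matched against the threshold $h-\bigl(\binom{r}{2}-j\bigr)$, the remaining positions are handled by the uniform bound $h-\tfrac{r(r-2)}{4}+1$ coming from Claim~\ref{cl:Kr-s}, $(\ref{eq:min2_i})$ and $(\ref{eq:min3_r})$, and the inequality $s(s-1)\geq r(r-2)/4$ closes the gap between the two ranges. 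The only cosmetic difference is that you phrase it via sorting all free edges by multiplicity and counting edges above a threshold, while the paper writes out the ordering explicitly; the verification and the delicate applicability check for $(\ref{eq:maxl_i+})$ that forces the exclusion of $(s-1,s)$ are the same.
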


\begin{proof}
For each $r-s+1\leq i\leq r$, denote $\{v_iv_{j}\colon\, 1\leq j\leq i-1\}$ by $\{e_{i,1}, \ldots, e_{i,i-1}\}$ with $w_{F^+}(e_{i,1})\geq \cdots \geq w_{F^+}(e_{i,i-1})$.
Then for each $1\leq \ell \leq i-1$, we have $w_{F^+}(e_{i,\ell})=\max_{\ell}\{w(v_iv_j)\colon\, 1\leq j\leq i-1\}$.
We now define the edges $e_{s+2}, e_{s+3}, \ldots, e_{{r\choose 2}}$. Let $e_{{r\choose 2}}, e_{{r\choose 2}-1}, \ldots,$ $e_{{r\choose 2}-s(s-2)+1}$ be the edges
$$e_{r,1}, e_{r-1,1}, \ldots, e_{r-s+1,1}, ~ e_{r,2}, e_{r-1, 2}, \ldots, e_{r-s+1,2}, ~ \ldots, ~ e_{r,s-2}, e_{r-1,s-2}, \ldots, e_{r-s+1,s-2},$$
respectively. Let $e_{{r\choose 2}-s(s-2)}, \ldots, e_{{r\choose 2}-s(s-1)+2}$ be the edges $$e_{r,s-1}, e_{r-1,s-1}, \ldots, e_{r-s+2,s-1},$$ respectively.
Let $e_{{r\choose 2}-s(s-1)+1}, \ldots, e_{s+2}$ be an arbitrary ordering of the remaining edges in ${V(F^{+})\choose 2}\setminus E(T^{+})$.

By Inequalities~(\ref{eq:max_i}) and (\ref{eq:max_r+}), we have $w_{F^+}(e_j)\geq h-\left({r\choose 2}-j\right)$ for ${r\choose 2}-s+1\leq j\leq {r\choose 2}.$
For $s+2\leq j\leq {r\choose 2}-s(s-1)+1$, we have $h-\left({r\choose 2}-j\right)\leq h-s(s-1)+1\leq h-\frac{r(r-2)}{4}+1$.
By Inequalities~(\ref{eq:min2_i}), (\ref{eq:min3_r}) and since $F^+\{v_1,\ldots,v_{r-s}\}$ is a copy of $K_{r-s}^{(h-r(r-2)/4+1)}$, we have $w_{F^+}(e_j)\geq h-\left({r\choose 2}-j\right)$ for $s+2\leq j\leq {r\choose 2}-s(s-1)+1$.

Finally, we consider the case ${r\choose 2}-s(s-1)+2\leq j\leq {r\choose 2}-s$.
Note that for each $r-s+1 \leq i\leq r$ and $2\leq \ell \leq s-2$, we have $e_{i,\ell}=e_{{r\choose 2}-((\ell-1)s+(r+1-i))+1}=e_{{r\choose 2}-((\ell-1)s+r-i)},$ and for each $r-s+2 \leq i\leq r$, we have $e_{i,s-1}=e_{{r\choose 2}-(s(s-2)+(r+1-i))+1}=e_{{r\choose 2}-(s(s-2)+r-i)}.$
Hence, it suffices to show that $w_{F^+}(e_{i,\ell})\geq h-((\ell-1)s+r-i)$ when $r-s+1 \leq i\leq r$ and $2\leq \ell \leq s-2$, and $w_{F^+}(e_{i,s-1})\geq h-(s(s-2)+r-i)$ when $r-s+2 \leq i\leq r$.
This is indeed true by Inequalities~(\ref{eq:maxl_i+}) and (\ref{eq:maxl_r+}).
This completes the proof of Claim~\ref{cl:embedding}.
\end{proof}

Let $e_{s+2}, e_{s+3}, \ldots, e_{{r\choose 2}}$ be the ordering of edges in ${V(F^{+})\choose 2}\setminus E(T^{+})$ obtained by Claim~\ref{cl:embedding}.
Combining with Observation~\ref{ob:r vertex} (ii), for any $j\in\{s+2, s+3, \ldots, {r\choose 2}\}$ and any $E\subseteq {V(H)\choose 2}$ with $|E|=j$, we have
\begin{equation}\label{eq:embed}
w_{F^{+}}(e_j)\geq h-\left({r\choose 2}-j\right)\geq w_H(E).
\end{equation}
\textbf{\boldmath The remainder of the proof focuses on showing that there exist edges $f_1, \ldots, f_{s+1} \in {V(H)\choose 2}$ such that }
\begin{itemize}
\item[(i)] $H[\{f_1, \ldots, f_{s+1}\}]$ can be embedded into $T^+$ in which $f_i$ is embedded into $e_i$ for each $i\in [s+1]$, and
\item[(ii)] $w_{F^{+}}(e_i)\geq \sum_{j=1}^{i}w_H(f_j)$ for each $i\in [s+1]$.
\end{itemize}
This together with Inequality (\ref{eq:embed}) and Lemma~\ref{le:Hall} implies that $F^+$ contains a multicolored copy of $H$, and thus $F$ is $H$-friendly.
We divide the rest of the proof into two cases based on the value of $w_{F^+}(e_1)$.
Recall that $w_{F^+}(e_1)= \min\{w_{F^+}(v_rv_i)\colon\, 1\leq i\leq r-1\}\geq 1$.

\medskip
\noindent{\bf Case 1.} $w_{F^+}(e_1)> \frac{r-2}{r}\big(h-\frac{r(r-2)}{4}+1\big).$
\medskip

By the assumption of Case~1 and Inequalities~(\ref{eq:min2_r}) and (\ref{eq:e_i}), we have
$w_{F^+}(e_1)\geq \frac{r-2}{r}\big(h-\frac{r(r-2)}{4}+1\big),$ $w_{F^+}(e_{s+1})\geq \frac{r-1}{r}\big(h-\frac{r(r-2)}{4}+1\big),$ and
$w_{F^+}(e_j)\geq (j-1)\frac{2}{r}\big(h-\frac{r(r-2)}{4}+1\big)$ for $j\in \{2, \ldots, s\}$.
We use $e_1', e_2', \ldots, e_{{r\choose 2}}'$ to denote the edges in ${V(F^{+})\choose 2}$ with $e_i'\colonequals e_{i+1}$ for $i\in [s-2]$, $e_{s-1}'\colonequals e_{1}$, and $e_i'\colonequals e_i$ for $i\in \{s, s+1, \ldots, {r\choose 2}\}$. Then
\begin{equation}\label{eq:ej'}
w_{F^+}(e_i')\geq \begin{cases}
               i\frac{2}{r}\big(h-\frac{r(r-2)}{4}+1\big) & \mbox{if $i\in [s-2]$},\\
               \frac{r-2}{r}\big(h-\frac{r(r-2)}{4}+1\big) & \mbox{if $i=s-1$},\\
               \left(\left\lceil\frac{r}{2}\right\rceil-1\right)\frac{2}{r}\big(h-\frac{r(r-2)}{4}+1\big) & \mbox{if $i=s$},\\
               \frac{r-1}{r}\big(h-\frac{r(r-2)}{4}+1\big) & \mbox{if $i=s+1$}.
              \end{cases}
\end{equation}

Let $f'_1, f'_2, \ldots, f'_{{r\choose 2}}$ be an ordering of edges in ${V(H)\choose 2}$ with $w_H(f'_1)\geq w_H(f'_2)\geq \cdots \geq w_H\big(f'_{{r\choose 2}}\big)$.
For each $j\in [s+1]$, let $h_j\colonequals \sum_{\ell=s}^{s+j-1}w_{H}(f'_{\ell}).$

\begin{claim}\label{cl:hj}
For each $j\in [s+1]$, we have $h_j\leq \frac{j}{s+j-1}\big(h-\frac{r(r-2)}{4}+1\big).$
\end{claim}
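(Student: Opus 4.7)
The plan is to prove Claim~\ref{cl:hj} by a simple two-sided averaging argument applied to the non-increasing ordering $w_H(f'_1)\ge w_H(f'_2)\ge\cdots\ge w_H(f'_{\binom{r}{2}})$, combined with the weight bound supplied by Observation~\ref{ob:r vertex} (ii). The key observation is that $h_j$ is the sum of $j$ consecutive terms in the middle of this sorted sequence (positions $s$ through $s+j-1$), so the terms before it are each at least the average of the block, and the sum of all the terms together cannot exceed what Observation~\ref{ob:r vertex} (ii) permits.

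Concretely, I would proceed in three steps. First, applying Observation~\ref{ob:r vertex} (ii) to $E=\{f'_1,\dots,f'_{s+j-1}\}$ gives the upper bound
\[
\sum_{\ell=1}^{s+j-1}w_H(f'_\ell)\;\le\;h-\binom{r}{2}+(s+j-1).
\]
Second, since the sequence is non-increasing, every one of the first $s-1$ weights dominates $w_H(f'_s)$, which in turn is at least the average of $w_H(f'_s),\dots,w_H(f'_{s+j-1})$, namely $h_j/j$. Thus
\[
\sum_{\ell=1}^{s-1}w_H(f'_\ell)\;\ge\;(s-1)\cdot\frac{h_j}{j}.
\]
Adding $h_j$ to both sides and combining with the first bound yields $\frac{s+j-1}{j}\,h_j\le h-\binom{r}{2}+(s+j-1)$, i.e.
\[
h_j\;\le\;\frac{j}{s+j-1}\left(h-\binom{r}{2}+s+j-1\right).
\]

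The last step is to absorb the error term $-\binom{r}{2}+s+j-1$ into $-\tfrac{r(r-2)}{4}+1$, which amounts to the purely numerical inequality $s+j-2\le \binom{r}{2}-\tfrac{r(r-2)}{4}=\tfrac{r^2}{4}$. Since $j\le s+1$ and $s=\lceil r/2\rceil$, the left-hand side is at most $2s-1\le r$, and $r\le r^2/4$ holds for all $r\ge 4$, so the desired inequality is comfortably satisfied for $r\ge 5$. I do not expect any real obstacle here; the only thing to be careful about is the indexing of the sorted sequence and the verification of the final arithmetic, both of which are routine.
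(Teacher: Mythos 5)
Your argument is correct and is essentially the same as the paper's: you apply the averaging bound $\sum_{\ell=1}^{s-1}w_H(f'_\ell)\ge (s-1)h_j/j$ coming from the sorted order, combine it with the total-weight bound, and finish with the arithmetic comparison $s+j-2\le r^2/4$. The paper phrases the same computation as a proof by contradiction and invokes Observation~\ref{ob:r vertex}(i) on the tail $\{f'_{s+j},\dots,f'_{\binom{r}{2}}\}$ rather than Observation~\ref{ob:r vertex}(ii) on the head $\{f'_1,\dots,f'_{s+j-1}\}$, but these are equivalent restatements of the same fact, so the two proofs coincide.
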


\begin{proof} We shall show that $h_j\leq \frac{j}{s+j-1}\big(h-\big({r\choose 2}-(s+j-1)\big)\big)$, which implies the claim since ${r\choose 2}-(s+j-1)\geq {r\choose 2}-2s\geq {r\choose 2}-(r+1)\geq \frac{r(r-2)}{4}-1$.
Suppose for a contradiction that $h_j> \frac{j}{s+j-1}\big(h-\big({r\choose 2}-(s+j-1)\big)\big).$
Then $w_{H}(f'_s)\geq \frac{1}{j}h_j > \frac{1}{s+j-1}\big(h-\big({r\choose 2}-(s+j-1)\big)\big)$ and thus $w_H(f'_1)\geq \cdots \geq w_H(f'_{s-1})\geq w_{H}(f'_s)> \frac{1}{s+j-1}\big(h-\big({r\choose 2}-(s+j-1)\big)\big).$
Combining with Observation~\ref{ob:r vertex} (i), we have
\begin{align*}
e(H)= &~\sum\nolimits_{i=1}^{s-1}w_{H}(f'_i) + \sum\nolimits_{i=s}^{s+j-1}w_{H}(f'_i) + \sum\nolimits_{i=s+j}^{{r\choose 2}}w_{H}(f'_i) \\
> &~ (s-1)\frac{1}{s+j-1}\left(h-\left({r\choose 2}-(s+j-1)\right)\right) \\
~&~~+ \frac{j}{s+j-1}\left(h-\left({r\choose 2}-(s+j-1)\right)\right) + {r\choose 2}-(s+j-1) \\
= &~ h,
\end{align*}
a contradiction.
The result follows.
\end{proof}

\begin{claim}\label{cl:ej'}
For each $j\in [s+1]$, we have $w_{F^+}(e_j')\geq h_j.$
\end{claim}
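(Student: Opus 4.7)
The plan is to prove Claim~\ref{cl:ej'} via a direct case analysis on $j\in[s+1]$, comparing the explicit lower bounds on $w_{F^+}(e_j')$ given in Inequality~(\ref{eq:ej'}) with the upper bound $h_j\leq \frac{j}{s+j-1}\big(h-\frac{r(r-2)}{4}+1\big)$ supplied by Claim~\ref{cl:hj}. Writing $M := h-\frac{r(r-2)}{4}+1$ for brevity, each case of (\ref{eq:ej'}) yields a bound of the form $w_{F^+}(e_j')\geq c_j M$ for an explicit constant $c_j$ depending only on $r$ and $s=\lceil r/2\rceil$, so the target inequality $w_{F^+}(e_j')\geq h_j$ reduces in each case to a purely numerical inequality $c_j\geq \frac{j}{s+j-1}$ that no longer involves $h$.

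For $1\leq j\leq s-2$, we have $c_j=\frac{2j}{r}$, and $c_j\geq \frac{j}{s+j-1}$ rearranges to $2(s+j-1)\geq r$, which follows from $s\geq r/2$ and $j\geq 1$. For $j=s-1$, the required inequality is $\frac{r-2}{r}\geq \frac{s-1}{2s-2}=\frac{1}{2}$, immediate since $r\geq 4$. For $j=s+1$, the inequality $\frac{r-1}{r}\geq \frac{s+1}{2s}$ rearranges to $r(s-1)\geq 2s$, which holds easily for $r\geq 3$ and $s\geq 3$. In each of these three regimes the verification is a one-line arithmetic check.

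The tightest, and essentially only nontrivial, case is $j=s$, where $c_s=\frac{2(s-1)}{r}$ and the target inequality reads $\frac{2(s-1)}{r}\geq \frac{s}{2s-1}$, equivalently $2(s-1)(2s-1)\geq rs$. Using $r\leq 2s$ (because $s=\lceil r/2\rceil$), it suffices to show $(s-1)(2s-1)\geq s^2$, i.e., $s^2-3s+1\geq 0$, which holds precisely for $s\geq 3$; the standing hypothesis $r\geq 5$ guarantees $s\geq 3$. This is the main (though minor) obstacle in the argument, and its tightness is precisely why the hypothesis $r\geq 5$ appears in Theorem~\ref{th:r_vertex}. Assembling the four cases completes the proof of Claim~\ref{cl:ej'}.
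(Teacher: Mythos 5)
Your proof is correct and follows essentially the same route as the paper: combine Claim~\ref{cl:hj} with the lower bounds in Inequality~(\ref{eq:ej'}) to reduce Claim~\ref{cl:ej'} to the purely numerical comparison $c_j\geq \frac{j}{s+j-1}$, and then check cases on $j$. The only cosmetic difference is that for $j=s$ the paper splits into $r$ odd versus $r$ even, whereas you handle both parities at once via $r\leq 2s$, reducing the check to $s^2-3s+1\geq 0$ for $s\geq 3$ -- a mild streamlining of the same calculation.
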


\begin{proof} By Claim~\ref{cl:hj}, it suffices to show that $w_{F^+}(e_j')/\big(h-\frac{r(r-2)}{4}+1\big)\geq \frac{j}{s+j-1}$ for each $j\in [s+1].$
We shall use the lower bound on $w_{F^+}(e_j')$ given by Inequality~(\ref{eq:ej'}).
For $j\in [s-2]$, we have $j\frac{2}{r}-\frac{j}{s+j-1}\geq j\big(\frac{2}{r}-\frac{1}{r/2+1-1}\big)= 0.$
For $j=s-1$, we have $\frac{r-2}{r}-\frac{j}{s+j-1}= \frac{r-2}{r}-\frac{1}{2}> 0.$
For $j=s$, if $r\geq 5$ is odd, then $\left(\left\lceil\frac{r}{2}\right\rceil-1\right)\frac{2}{r}-\frac{j}{s+j-1}= \frac{r-1}{r}-\frac{r+1}{2r}>0$;
if $r\geq 6$ is even, then $\left(\left\lceil\frac{r}{2}\right\rceil-1\right)\frac{2}{r}-\frac{j}{s+j-1}= \frac{r-2}{r}-\frac{r}{2(r-1)}> 0$.
For $j=s+1$, we have $\frac{r-1}{r}-\frac{j}{s+j-1}= \frac{r-1}{r}-\frac{s+1}{2s}\geq \frac{r-1}{r}-\frac{1}{2}-\frac{1}{r}> 0.$
The result follows.
\end{proof}

Let $H'$ be the simple graph with $V(H')=V(H)$ and $E(H')=\{f'_1, f'_2, \ldots, f'_{s-1}\}$.
Recall that $T^+$ is an $(s+1)$-edge graph on at most $r$ vertices.
Let $T^{++}$ be the graph obtained from $T^+$ by adding $r-|V(T^+)|$ isolated vertices.
Since $e(T^{++})+e(H')=2s\leq r+1 \leq \frac{3}{2}(r-1)$, there is a packing of $T^{++}$ and $H'$ by Theorem~\ref{th:packing-sum}.
Hence, there exist $s+1$ edges $f'_{i_1}, \ldots, f'_{i_{s+1}}\in {V(H)\choose 2}$ and an embedding $\phi: V(H) \to V(F^+)$ such that
\begin{itemize}
\item[(i)] $f'_{i_1}, \ldots, f'_{i_{s+1}}\in \big\{f'_s, \ldots, f'_{{r\choose 2}}\big\}$, and
\item[(ii)] $\phi(f'_{i_j})=e_j'$ for each $j\in [s+1]$.
\end{itemize}
Moreover, by Claim~\ref{cl:ej'}, we have
$$w_{F^+}(e_j')\geq h_j=\sum\nolimits_{\ell=s}^{s+j-1}w_{H}(f'_{\ell}) \geq \sum\nolimits_{\ell=1}^{j}w_{H}(f'_{i_{\ell}})$$
for each $j\in [s+1]$.
For each $1\leq j\leq {r\choose 2}$, let $f''_j\colonequals \phi^{-1}(e_j')$.
Combining with Inequality (\ref{eq:embed}), we have $w_{F^{+}}(e_j')\geq \sum_{\ell=1}^{j}w_H(f''_{\ell})$ for each $1\leq j\leq {r\choose 2}$.
By Lemma~\ref{le:Hall}, there is a multicolored copy of $H$ in $F^+$.
Thus $F$ is $H$-friendly.
By Lemma~\ref{le:partite}, $G$ is $(r-1)$-partite.
This completes the proof of Case~1.

\medskip
\noindent{\bf Case 2.} $1\leq w_{F^+}(e_1)\leq \frac{r-2}{r}\big(h-\frac{r(r-2)}{4}+1\big).$
\medskip

In this case, we have
\begin{align}\label{eq:es+1}
 w_{F^+}(e_{s+1}) = &~\min\nolimits_2\{w(v_rv_i)\colon\, 1\leq i\leq r-1\} \nonumber\\
 \geq &~\sum\nolimits_{i\in[r-1]}w(v_rv_i)-(r-3)k-w_{F^+}(e_1) \nonumber\\
 \geq &~(r-2)k-(r-3)k-\frac{r-2}{r}\left(h-\frac{r(r-2)}{4}+1\right) \nonumber\\
 > &~2\frac{r-1}{r}\left(h-\frac{r(r-2)}{4}+1\right)-\frac{r-2}{r}\left(h-\frac{r(r-2)}{4}+1\right) \nonumber\\
 = &~h-\frac{r(r-2)}{4}+1 ~\geq h-\left({r\choose 2}-(s+1)\right).
\end{align}
Combining this with Observation~\ref{ob:r vertex} and Inequality~(\ref{eq:embed}), we have $w_{F^{+}}(e_j)\geq w_H(E)$ for any $j\in\{s+1, s+2, \ldots, {r\choose 2}\}$ and any $E\subseteq {V(H)\choose 2}$ with $|E|=j$.

Recall that $T$ is a forest with $s$ edges.
Let $T_1$ be the connected component of $T$ containing the edge $e_1$.
Let $m\colonequals |E(T_1)|-1$. Then $0\leq m\leq s-1$. Let $\{e_{i_1}, \ldots, e_{i_m}, e_1\}$ be the edge set of $T_1$.
Since $T_1$ is a tree, we may assume that for each $j\in [m]$, the edge $e_{i_j}$ has exactly one end-vertex in $T_1[\{e_{i_{j+1}}, \ldots, e_{i_m}, e_{1}\}].$
Let $e_1=v_rv_{q}$, where $q\in [r-1]$.
By the definition of $T$, if an edge is adjacent to $e_1$ in $T$, then this edge must be incident with $v_q$.
We may further assume that $\{e_{i_{m'}}, \ldots, e_{i_m}\}$ is the set of all edges adjacent to $e_{1}$ in $T_1$.
Then $\{e_{i_{m'}}, \ldots, e_{i_m}\}$ forms a star with center $v_q$.

\begin{claim}\label{cl:m'=1}
If $m'=1$ and $m=s-1$, then $G$ is $(r-1)$-partite.
\end{claim}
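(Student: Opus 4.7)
The hypotheses $m'=1$ and $m=s-1$ force $T$ to be the star on $s$ edges centered at $v_q$ whose leaves are $v_r,v_{r-1},\ldots,v_{r-s+1}$. Since each $e_i=v_{r-i+1}v_q$ requires $q\leq r-i$, the center $v_q$ must lie in the initial clique $\{v_1,\ldots,v_{r-s}\}$. The plan is to exhibit a multicolored copy of $H$ inside $F^+$, so that $F$ becomes an $H$-friendly induced subgraph of $G$; Lemma~\ref{le:partite} then yields that $G$ is $(r-1)$-partite.

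Because $H$ is $r$-vertex and $r$-color-critical, its underlying graph is $K_r$, and it contains a critical edge $f^*=u^*v^*$ of multiplicity exactly one. We build the embedding $\phi:V(H)\to V(F^+)$ as follows: set $\phi(u^*)=v_q$ and $\phi(v^*)=v_r$, so that the edge $f_1\colonequals \phi^{-1}(e_1)=f^*$ automatically satisfies $w_H(f_1)=1\leq w_{F^+}(e_1)$. Next, list $V(H)\setminus\{u^*,v^*\}$ as $u^{(1)},\ldots,u^{(r-2)}$ in nondecreasing order of $w_H(u^*u^{(j)})$ and set $\phi(u^{(j)})=v_{r-j}$ for $j\in[s-1]$, thereby routing the $s-1$ lightest edges at $u^*$ (other than $f^*$) to the star edges $e_2,\ldots,e_s$. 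The remaining $r-s-1$ vertices of $H$ are mapped to $\{v_1,\ldots,v_{r-s}\}\setminus\{v_q\}$ by a bijection chosen to minimize $w_H(f_{s+1})$, where $f_{s+1}\colonequals \phi^{-1}(e_{s+1})$.

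We then apply Lemma~\ref{le:Hall} to $\phi$ using the enumeration whose first $s+1$ edges are $f_1,\ldots,f_{s+1}$ and whose remaining edges are ordered as in Claim~\ref{cl:embedding}. The cumulative bound at $i=1$ is immediate. For $2\leq i\leq s$, the bound reduces to
\[
1 + \frac{i-1}{r-2}\bigl(d_H(u^*)-1\bigr) \;\leq\; (i-1)\cdot \frac{2}{r}\Bigl(h-\frac{r(r-2)}{4}+1\Bigr),
\]
which follows by averaging from $d_H(u^*)\leq h-\binom{r-1}{2}$ together with $h\geq \binom{r}{2}$ and $r\geq 5$; indeed, the coefficient of $h$ on the right side exceeds that on the left precisely when $r\geq 5$. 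For $i\geq s+2$, the cumulative bound follows from Claim~\ref{cl:embedding} combined with Observation~\ref{ob:r vertex}(ii).

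The main obstacle is the cumulative inequality at $i=s+1$, which requires a sharp bound on $w_H(f_{s+1})$. The plan is to close this step via a short case split on the location of $v_p$ in $e_{s+1}=v_rv_p$. When $v_p\in\{v_1,\ldots,v_{r-s}\}\setminus\{v_q\}$, the bijection on the last $r-s-1$ vertices lets us choose $\phi^{-1}(v_p)$ so that $w_H(v^*\phi^{-1}(v_p))$ is at most the average of $w_H(v^*\cdot)$ over that set, an amount controlled by $d_H(v^*)\leq h-\binom{r-1}{2}$. When $v_p=v_{r-j}$ is a star leaf, we have $\phi^{-1}(v_p)=u^{(j)}$ for some $j\in[s-1]$, and the required estimate should follow from comparing the large lower bound $w_{F^+}(e_{s+1}) > h-\tfrac{r(r-2)}{4}+1$ provided by Inequality~(\ref{eq:es+1}) with the averaging bounds already used for $2\leq i\leq s$. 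Either way, the freedom in selecting the endpoint $v^*$ of the critical edge (among the two endpoints of $f^*$, say) provides an extra handle should one of the two branches above prove delicate.
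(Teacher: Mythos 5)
Your structural setup is correct, and your treatment of the steps $2\leq i\leq s$ is sound: routing the $s-1$ lightest edges at the center vertex $u^*$ to the star edges $e_2,\ldots,e_s$ and bounding the cumulative sum by averaging works, even though your bound $d_H(u^*)\leq h-\binom{r-1}{2}$ from Observation~\ref{ob:r vertex}(iii) is weaker than the paper's (the paper picks the critical edge's endpoint of smaller degree for the center, getting $\sum_{i\geq 3}w_H(u_2u_i)\leq \tfrac{1}{2}\big(h-\binom{r-2}{2}-1\big)$, which is considerably sharper). Your weaker bound still closes those steps because $\tfrac{r-4}{r(r-2)}>0$ for $r\geq 5$.

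The gap is at $i=s+1$, where your write-up explicitly leaves a plan rather than a proof (``the plan is to close this step,'' ``should follow,'' ``provides an extra handle should one of the two branches above prove delicate''). Moreover the plan itself is headed in the wrong direction: you try to bound $w_H(f_{s+1})$ \emph{individually} and add it to the partial sum $\sum_{i\leq s}w_H(f_i)$. That route does not work in general. For odd $r$ one would need $w_H(f_{s+1})\lesssim \tfrac{1}{r}\big(h-\tfrac{r(r-2)}{4}+1\big)$, whereas your averaging argument at $v^*$ over the $r-s-1$ remaining vertices only gives roughly $\tfrac{2}{r-3}\big(h-\binom{r-1}{2}\big)$; for $r=5$ and $h$ large this is on the order of $h$, not $h/5$, so the branch where $v_p$ lies in the initial clique fails outright. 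The correct observation --- the one the paper uses --- is that no per-edge bound is needed at $i=s+1$: since $f_1,\ldots,f_{s+1}$ are $s+1$ \emph{distinct} edges of $\binom{V(H)}{2}$, their total weight is already controlled by Observation~\ref{ob:r vertex}(ii) as $\sum_{i=1}^{s+1}w_H(f_i)\leq h-\big(\binom{r}{2}-(s+1)\big)$, and Inequality~(\ref{eq:es+1}) records precisely $w_{F^+}(e_{s+1})\geq h-\big(\binom{r}{2}-(s+1)\big)$. So the $i=s+1$ step follows with no case split and no choice of which endpoint of the critical edge goes where; the same pairing of Observation~\ref{ob:r vertex}(ii) with a lower bound on $w_{F^+}(e_j)$ that you already invoke for $i\geq s+2$ via Claim~\ref{cl:embedding} is exactly what handles $i=s+1$ via~(\ref{eq:es+1}).
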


\begin{proof}
If $m'=1$ and $m=s-1$, then $T=T_1$ is a star with center $v_q$.
Let $V(H)=\{u_1, u_2, \ldots, u_r\}$ and $f_1\colonequals u_1u_2$ be a critical edge of $H$. So $w_H(f_1)=1$.
Without loss of generality, we may assume that $d_H(u_2)\leq d_H(u_1)$ and $w_H(u_2u_3)\leq \cdots \leq w_H(u_2u_r)$.
Then by Observation~\ref{ob:r vertex} (i), we have
$$\sum\nolimits_{i=3}^{r}w_H(u_2u_i)\leq \frac{1}{2}\left(h-\sum\nolimits_{e\in {V(H)\setminus \{u_1, u_2\}\choose 2}}w_H(e)-w_H(u_1u_2)\right)\leq \frac{1}{2}\left(h-{r-2 \choose 2}-1\right),$$
and moreover, for each $2\leq j\leq s$ we have
$$\sum\nolimits_{i=3}^{j+1}w_H(u_2u_{i})\leq \frac{j-1}{r-2}\sum\nolimits_{i=3}^{r}w_H(u_2u_i)\leq \frac{j-1}{r-2}\cdot \frac{1}{2}\left(h-{r-2 \choose 2}-1\right).$$
For each $2\leq j\leq s$, let $f_j\colonequals u_2u_{j+1}$, so
$$\sum\nolimits_{i=1}^{j}w_H(f_i)= 1+\sum\nolimits_{i=2}^{j}w_H(u_2u_{i+1})=1+\sum\nolimits_{i=3}^{j+1}w_H(u_2u_{i}) \leq 1+\frac{j-1}{r-2}\cdot \frac{1}{2}\left(h-{r-2 \choose 2}-1\right).$$
Recall that $r\geq 5$ and $h\geq {r\choose 2}$.
Then for $j\geq 2$, we have
\begin{align*}
~&~(j-1)\frac{2}{r}\left(h-\frac{r(r-2)}{4}+1\right)- 1-\frac{j-1}{r-2}\cdot \frac{1}{2}\left(h-{r-2 \choose 2}-1\right) \\
\geq &~(j-1)\frac{2}{r}\left({r\choose 2}-\frac{r(r-2)}{4}+1\right)- 1-\frac{j-1}{r-2}\cdot \frac{1}{2}\left({r\choose 2}-{r-2 \choose 2}-1\right) \\
= &~(j-1)\frac{2}{r}\left(\frac{r^2}{4}+1\right)-1-\frac{j-1}{2(r-2)}(2r-4) \\
= &~\frac{r}{2}(j-1)+\frac{2}{r}(j-1)-j ~> \frac{r}{2}(j-1)-j ~\geq \frac{5}{2}(j-1)-j ~= \frac{3}{2}j-\frac{5}{2} ~> 0,
\end{align*}
where the first inequality holds since $h\geq {r\choose 2}$ and $(j-1)\frac{2}{r}-\frac{j-1}{r-2}\cdot \frac{1}{2}=(j-1)\big(\frac{2}{r}-\frac{1}{2(r-2)}\big)>0.$
Thus $\sum_{i=1}^{j}w_H(f_i)< (j-1)\frac{2}{r}\big(h-\frac{r(r-2)}{4}+1\big)$ for $2\leq j\leq s$.
This together with Inequality~(\ref{eq:e_i}) implies that $w_{F^+}(e_j)\geq \sum_{i=1}^{j}w_H(f_i)$ for each $j\in [s]$.
Let $\phi: V(H) \to V(F^+)$ be an embedding with $\phi(f_j)=e_j$ for each $j\in [s]$.
Combining with Observation~\ref{ob:r vertex}, Inequalities~(\ref{eq:embed}), (\ref{eq:es+1}) and Lemma~\ref{le:Hall}, we can deduce that $F^+$ contains a multicolored copy of $H$.
Thus $F$ is $H$-friendly.
By Lemma~\ref{le:partite}, $G$ is $(r-1)$-partite.
This completes the proof of Claim~\ref{cl:m'=1}.
\end{proof}

By Claim~\ref{cl:m'=1}, we may assume that either $m'\neq 1$ or $m\neq s-1$ in the following arguments.

\begin{claim}\label{cl:T1}
$H$ contains a subgraph $H_1$ such that
\begin{itemize}
\item[{\rm (i)}] the underlying graph  of $H_1$ is $T_1$,
\item[{\rm (ii)}] the edge corresponding to $e_1$ is a critical edge of $H$, and
\item[{\rm (iii)}] each edge of $H_1$ has multiplicity at most $\frac{2}{r}\big(h-\frac{r(r-2)}{4}+1\big)-1$ in $H$.
\end{itemize}
\end{claim}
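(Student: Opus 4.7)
The plan is to realize $T_1$ as a subgraph of $K_r$ (the underlying graph of $H$, which is complete since $H$ is $r$-vertex $r$-chromatic) via an injective map $\sigma$ that sends $e_1$ to a critical edge of $H$ and every edge of $T_1$ to an edge of small multiplicity in $H$. Call an edge $e\in \binom{V(H)}{2}$ \emph{heavy} if $w_H(e) > M$, where $M \colonequals \frac{2}{r}(h-\frac{r(r-2)}{4}+1)-1$, and let $X$ denote the set of heavy edges. By Observation~\ref{ob:r vertex}(ii), $w_H(X)\leq h - \binom{r}{2} + |X|$; combined with the bound $w_H(X)\geq |X|(M+1)$ this gives $|X|M\leq h-\binom{r}{2}$. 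Substituting the explicit value of $M$ and simplifying yields $|X| < r/2$, hence $|X|\leq s-1$. Since the underlying graph of $H$ is $K_r$, lowering the multiplicity of an edge only changes the chromatic number when it drops from $1$ to $0$, so the critical edges of $H$ are precisely the edges of multiplicity $1$; in particular, every critical edge is automatically light.

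Fix a critical edge $f_1 = u_1u_2$ and attempt the partial embedding $\sigma(v_r)=u_1$, $\sigma(v_q)=u_2$, which realizes $\sigma(e_1)=f_1$. Recall that $v_r$ is a leaf of $T_1$ and that $v_q$ is the common endpoint of $e_1$ together with the star $\{e_{i_{m'}},\ldots,e_{i_m}\}$, so that $\deg_{T_1}(v_q) = m-m'+2$; the remaining edges $e_{i_1},\ldots,e_{i_{m'-1}}$ each attach a single new vertex to the subtree already constructed. The crucial step is placing the star at $u_2$ using only light edges, which requires $u_2$ to have at least $m-m'+1$ light neighbors in $V(H)\setminus\{u_1,u_2\}$. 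Since $d_X(u_2)\leq |X|\leq s-1$ and $u_1u_2$ is light, $u_2$ has at least $(r-2)-(s-1)=r-s-1$ such light neighbors; the required inequality $r-s-1\geq m-m'+1$, i.e., $r-s\geq m-m'+2$, is automatic for even $r$ and holds for odd $r$ precisely when $m'\geq 2$ or $m\leq s-2$. This is exactly the case assumption available after Claim~\ref{cl:m'=1}, since the excluded configuration $m=s-1,\,m'=1$ is the single one in which the star would demand one more light neighbor than is guaranteed.

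Once the star at $u_2$ is placed, the outer edges $e_{i_j}$ are added one at a time, each introducing a single new vertex $z$ attached via a light edge $wz$ to an already-placed vertex $w$ distinct from $u_1,u_2$. A counting argument, using $|V(T_1)|\leq s+1$, $|X|\leq s-1$, and the fact that $\sum_v d_X(v) = 2|X|\leq 2(s-1)$ leaves at most one vertex of $V(H)$ with $d_X = s-1$ (as two such vertices would force all heavy edges to coincide with the single edge between them, contradicting $|X|\geq 2$ for $r\geq 5$), shows that the current extension vertex $w$ always has an unused light neighbor; moreover, reordering which star leaves map to which candidates of $N_L(u_2)$ allows us to keep the single potential worst-case vertex out of the set used to base outer extensions. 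Taking $H_1$ to be the sub-multigraph of $H$ supported on $\sigma(E(T_1))$ with multiplicities inherited from $H$ then satisfies (i)–(iii). The main obstacle is the tight placement of the star at $v_q$: this is precisely where the bound $|X|\leq s-1$ is used sharply and where the case assumption from Claim~\ref{cl:m'=1} becomes indispensable, while the outer-edge extension, though requiring some care, carries through with more slack.
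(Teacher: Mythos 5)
Your approach is genuinely different from the paper's. The paper builds $H_1$ by a careful greedy induction, choosing $f_{i_\ell}$ one edge at a time and bounding $w_H(w'w'')$ via the degree bound of Observation~\ref{ob:r vertex}(iii), with a delicate special argument when $r$ is odd, $m=s-1$, $\ell=2$. You instead count heavy edges globally (getting $|X|\leq s-1$), observe that critical edges of an $r$-vertex $r$-critical multigraph are exactly the multiplicity-$1$ edges, and then try to embed $T_1$ into the light graph $L$ rooted at a critical edge. The heavy-edge count, the star placement, and the identification of where the case hypothesis from Claim~\ref{cl:m'=1} enters are all correct (modulo a small sloppiness: $w_H(e)>M$ only gives $w_H(e)\geq M+1$ when $M$ is an integer, but $w_H(X)>|X|M$ is enough to conclude $|X|<r/2$).

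However, there is a real gap in the outer-extension step, and your closing remark that it ``carries through with more slack'' is wrong: it is tight in exactly the same parameter range. For $r$ odd and $m=s-1$, when placing the last outer edge with attachment vertex $w'$, one needs an unused light neighbor; the naive bound gives $\deg_L(w')\geq r-s=s-1$ light neighbors of which up to $m=s-1$ may already be used, so the count is $\geq 0$, not $\geq 1$. Your fix---``reordering which star leaves map to which candidates of $N_L(u_2)$''---is not justified and does not obviously work: the worst-case vertex $v^*$ can be forced into the image through outer vertices rather than star leaves, and if every star leaf carries a branch there is no leaf slot to park $v^*$ in anyway. The gap is fillable, but by a different observation: if $v^*$ exists with $d_X(v^*)=s-1=|X|$, then every heavy edge is incident with $v^*$, so $L-v^*$ is the complete graph $K_{r-1}$; when $v^*\notin\{u_1,u_2\}$ one simply embeds $T_1$ (which has at most $s+1\leq r-1$ vertices) into $L-v^*$, and when $v^*\in\{u_1,u_2\}$ it is the image of $v_r$ or $v_q$, hence never an outer attachment point, and every other vertex has $d_X\leq s-2$, which makes the greedy extension go through. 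Without some argument of this kind, the proof as written does not close.
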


\begin{proof}
If $m=0$, then $T_1$ consists of the single edge $e_1$.
In this case, the result holds clearly since $H$ contains a critical edge.
In the following, we may assume that $m\geq 1$.
Let $f_1\colonequals u_1u_2$ be a critical edge of $H$.
Without loss of generality, we may assume that $d_H(u_2)\leq d_H(u_1)$.
Then $u_2$ has a neighbor $u_3\in V(H)\setminus \{u_1,u_2\}$ with $w_H(u_2u_3)\leq \frac{1}{r-2}\cdot \frac{1}{2}\left(h-{r-2 \choose 2}-1\right)\leq \frac{2}{r}\big(h-\frac{r(r-2)}{4}+1\big)-1.$
Let $f_{i_m}\colonequals u_2u_3.$
Assume that we have found $f_{i_{\ell}}, \ldots, f_{i_m}, f_1\in {V(H)\choose 2}$ for some $2\leq \ell \leq m$ such that for each $j\in \{\ell, \ldots, m\}$,
\begin{itemize}
\item[{\rm (i)}] the edge $f_{i_j}$ has multiplicity at most $\frac{2}{r}\big(h-\frac{r(r-2)}{4}+1\big)-1$, and
\item[{\rm (ii)}] the underlying graph of $H_{(j)}\colonequals H[\{f_{i_j}, \ldots, f_{i_m}, f_1\}]$ is isomorphic to $T_1[\{e_{i_j}, \ldots, e_{i_m}, e_1\}]$.
\end{itemize}
We shall find an edge $f_{i_{\ell-1}}\in {V(H)\choose 2}$ with $w_H(f_{i_{\ell-1}})\leq \frac{2}{r}\big(h-\frac{r(r-2)}{4}+1\big)-1$ such that $H[\{f_{i_{\ell-1}}, \ldots, f_{i_m}, f_1\}]$ is isomorphic to $T_1[\{e_{i_{\ell-1}}, \ldots, e_{i_m}, e_1\}]$.

Recall that $e_{i_{\ell-1}}$ has exactly one end-vertex in $T_1[\{e_{i_{\ell}}, \ldots, e_{i_m}, e_1\}]$, say vertex $w$.
Let $w'$ be the vertex in $H_{(\ell)}$ corresponding to $w$.
Note that $|V(H_{(\ell)})|=|E(H_{(\ell)})|+1= m-\ell+3$.
Let $w''\in V(H)\setminus  V(H_{(\ell)})$ with $w_H(w'w'')=\min_{x\in V(H)\setminus  V(H_{(\ell)})}w_H(w'x).$
By Observation~\ref{ob:r vertex} (i) and (iii), we have
$$w_H(w'w'')\leq \frac{d_H(w')-d_{H_{(\ell)}}(w')}{r-|V(H_{(\ell)})|} \leq \frac{h-{r-1 \choose 2}-(|V(H_{(\ell)})|-1)}{r-|V(H_{(\ell)})|} = 1+\frac{h-{r \choose 2}}{r-(m-\ell+3)}.$$
If $r\geq 6$ is even, then
$$w_H(w'w'')\leq 1+\frac{h-{r \choose 2}}{r-((s-1)-2+3)}= 1+\frac{h-{r \choose 2}}{r-r/2}\leq \frac{2}{r}\left(h-\frac{r(r-2)}{4}+1\right)-1;$$
if $r\geq 5$ is odd and $m-\ell \leq s-4$, then
$$w_H(w'w'')\leq 1+\frac{h-{r \choose 2}}{r-s+1}= 1+\frac{h-{r \choose 2}}{r-(r+1)/2+1}\leq \frac{2}{r}\left(h-\frac{r(r-2)}{4}+1\right)-1.$$
In both cases, we can choose $f_{i_{\ell-1}}$ to be $w'w''$.

Now we consider the remaining case that $r\geq 5$ is odd and $m-\ell \geq s-3$.
Since $m\leq s-1$ and $\ell \geq 2$, we have $m-\ell\leq s-3$.
Thus $m-\ell=s-3$, $m=s-1$ and $\ell=2$ now.
In the following, we shall find a desired edge $f_{i_1}$.
Recall that we have either $m'\neq 1$ or $m\neq s-1$.
Thus $m'\neq 1$.
Then $e_{i_1}$ is not adjacent to $e_1$, so $w$ is not an end-vertex of $e_1$.
Thus $w'\notin\{u_1, u_2\}$.
If $w_H(w'w'')\leq \frac{2}{r}\big(h-\frac{r(r-2)}{4}+1\big)-1$, then we can choose $f_{i_1}$ to be $w'w''$, and we are done.
If $w_H(w'w'')>\frac{2}{r}\big(h-\frac{r(r-2)}{4}+1\big)-1$, then since $w_H(w'w'')=\min_{x\in V(H)\setminus V(H_{(\ell)})}w_H(w'x)$ and $|V(H)\setminus V(H_{(2)})|=r-(m-\ell+3)= r-s= \frac{r-1}{2}$, we have
\begin{align*}
 \sum_{x\in V(H)\setminus V(H_{(2)})}w_H(w'x) > &~ |V(H)\setminus V(H_{(2)})|\left(\frac{2}{r}\left(h-\frac{r(r-2)}{4}+1\right)-1\right) \\
 = &~\frac{r-1}{r}\left(h-\frac{r(r-2)}{4}+1\right)-\frac{r-1}{2}.
\end{align*}
By Observation~\ref{ob:r vertex} (i), for any $e\in {V(H)\choose 2}\setminus \{w'x\colon\, x\in V(H)\setminus  V(H_{(2)})\}$, we have
\begin{align*}
 w_H(e)\leq &~h-\sum_{x\in V(H)\setminus V(H_{(2)})}w_H(w'x)-\left({r \choose 2}-\left(|V(H)\setminus V(H_{(2)})|\right)-1\right) \\
 < &~h-\left(\frac{r-1}{r}\left(h-\frac{r(r-2)}{4}+1\right)-\frac{r-1}{2}\right)-\left({r \choose 2}-\frac{r-1}{2}-1\right) \\
 = &~\frac{1}{r}h+\frac{(r-1)(r-2)}{4}-\frac{r-1}{r} +(r-1)-{r\choose 2}+1 \\
 = &~\frac{1}{r}h-\frac{(r-1)(r-2)}{4}+\frac{1}{r} ~< \frac{2}{r}\left(h-\frac{r(r-2)}{4}+1\right)-1.
\end{align*}
Hence, we can choose two vertices $w^{\ast}, w^{\ast\ast}\in V(H)\setminus V(H_{(2)})$ arbitrarily so that if we replace $w'$ by $w^{\ast}$ in $H_{(2)}$ and let $f_{i_1}\colonequals w^{\ast}w^{\ast\ast}$, then we get a desired $H_1$.
This completes the proof of Claim~\ref{cl:T1}.
\end{proof}

Let $t_1\colonequals |E(T_1)|$ and $T_2\colonequals T\setminus T_1$.
Then $T_2$ is a forest on $s-t_1$ edges.
The following Claims~\ref{cl:r=5} and \ref{cl:r6} ensure the existences of the desired edges $f_1, \ldots, f_{s}$.
We first show that we can find the desired edges $f_1, \ldots, f_{s}$ in the case $r=5$.
Note that $s=3$ when $r=5$.

\begin{claim}\label{cl:r=5}
If $r=5$, then we can find edges $f_1, f_2, f_{3} \in {V(H)\choose 2}$ such that
\begin{itemize}
\item[{\rm (i)}] $H[\{f_1, f_2, f_{3}\}]$ can be embedded into $T$ in which $f_i$ is embedded into $e_i$ for each $i\in [3]$, and
\item[{\rm (ii)}] $w_{F^{+}}(e_i)\geq \sum_{j=1}^{i}w_H(f_j)$ for each $i\in [3]$.
\end{itemize}
\end{claim}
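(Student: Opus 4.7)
My plan is a case analysis on the isomorphism type of $T$, which is a forest with $s=3$ edges on at most $r=5$ vertices, so $T$ must be either a path $P_4$, a star $K_{1,3}$, or the disjoint union $P_3 \cup P_2$. The star case is precisely $m'=1,\, m=s-1=2$, which Claim~\ref{cl:m'=1} already eliminates; moreover when $T=P_4$, the distinguished edge $e_1$ must be a pendant of the path (else we would again fall into the excluded configuration). In every remaining case the skeleton is the same: invoke Claim~\ref{cl:T1} to embed $T_1$ into $H$ as a sub-multigraph $H_1$ with critical edge $f_1 \leftrightarrow e_1$ (so $w_H(f_1)=1$) and every other edge of $H_1$ of multiplicity at most $M \colonequals \frac{2}{r}\bigl(h - \frac{r(r-2)}{4} + 1\bigr) - 1$, and then supply the edges of $T_2$ by picking low-multiplicity edges of $H$ on $V(H)\setminus V(H_1)$. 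The cumulative multiplicities required by Lemma~\ref{le:Hall} are $w_{F^+}(e_2) \geq 1+M$ and $w_{F^+}(e_3) \geq 1+2M$, both guaranteed by Inequality~(\ref{eq:e_i}).

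If $t_1=s=3$ (so $T=T_1=P_4$), the three edges of $H_1$ directly yield $f_1,f_2,f_3$ after relabeling the two non-critical edges to match the $T_1 \cong P_4$ isomorphism with $f_i \mapsto e_i$. If $t_1=1$ (so $T_2=P_3$ is disjoint from $e_1$), Claim~\ref{cl:T1} only fixes $f_1=u_1u_2$, and I take $f_2,f_3$ to be the two smallest-multiplicity edges of $H[\{x_1,x_2,x_3\}]$, where $\{x_1,x_2,x_3\}=V(H)\setminus\{u_1,u_2\}$. Any two of the three edges on $\{x_1,x_2,x_3\}$ share a vertex, so they automatically form a $P_3$, and Observation~\ref{ob:r vertex}(ii) gives $\sum_{i<j} w_H(x_ix_j) \leq h-7$, so the two smallest sum to at most $\frac{2}{3}(h-7)$, which comfortably beats the $e_3$ threshold.

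The main obstacle is the intermediate case $t_1=2$, where $T_1=P_3$ is built on vertices $\{u_1,u_2,u_3\}$ of $H$ by Claim~\ref{cl:T1} and the last edge $f_{j_1}$ of $T_2$ is forced to be the unique edge on $V(H)\setminus\{u_1,u_2,u_3\}$. Here the freedom lies in the choice of $u_3 \in \{x_1,x_2,x_3\}$, and I need simultaneously $a_\ell \colonequals w_H(u_2x_\ell) \leq M$ (for Claim~\ref{cl:T1}) and the complementary $b_{ij} \colonequals w_H(x_ix_j) \leq M$ with $\{i,j,\ell\}=\{1,2,3\}$ (to beat the $e_3$ cumulative bound when $i_1=3$). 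By Observation~\ref{ob:r vertex}(ii), $\sum_\ell a_\ell + \sum_{i<j} b_{ij} \leq h-4$, since the four edges incident to $u_1$ each contribute at least $1$ to the multiplicity budget. If no $\ell$ worked, then in each of the three disjoint pairs $\{(a_1,b_{23}),(a_2,b_{13}),(a_3,b_{12})\}$ some entry would exceed $M$, giving three distinct entries with sum strictly above $3M$, and with the other three entries contributing at least $3$, the total would strictly exceed $3M+3$. A short calculation shows $3M+3 \geq h-4$ for $r=5$, so the total would strictly exceed $h-4$, contradicting the budget inequality. The resulting good $u_3$ produces $f_1,f_2,f_3$ with $w_H(f_1)=1$ and $w_H(f_2),w_H(f_3)\leq M$, from which the cumulative bounds follow immediately.
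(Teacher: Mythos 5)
Your proof is correct and follows essentially the same case analysis as the paper's (on $t_1 = |E(T_1)| \in \{1,2,3\}$, having excluded the star via Claim~\ref{cl:m'=1}). One minor inaccuracy: when $T = P_4$, the reason $e_1$ is a pendant edge is not that the middle-edge configuration falls under Claim~\ref{cl:m'=1}, but rather that it is impossible outright, since among $e_1,\dots,e_s$ only $e_1$ is incident to $v_r$, so $v_r$ is a leaf of $T$. The only substantive deviation from the paper is in the $t_1=2$ case: you keep $u_2$ as the center of $T_1$ and run a covering/contradiction argument over the three disjoint pairs $(a_\ell, b_{ij})$, whereas the paper switches the center to $u_1$, considers the three disjoint pairs $(u_1x_\ell, x_ix_j)$, and takes the minimum-sum pair directly by averaging. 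Both yield a pair of multiplicities each at most $M$, and both exploit the same budget $h-4$ (your version subtracts the four edges at $u_1$, the paper's the four at $u_2$); the paper's averaging is marginally more direct, but the two are equivalent in substance. The cumulative bound checks and the embeddings you describe all go through.
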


\begin{proof}
Since $|V(T)|\leq r=5$, $T$ has at most two components.
Let $V(H)=\{u_1, u_2, \ldots, u_5\}$ with $w_H(u_1u_2)=1$.
If $t_1=3=s$, then we are done by Claim~\ref{cl:T1} and Inequality~(\ref{eq:e_i}).
So we may assume that $t_1\leq 2$.

If $t_1=1$, then $T_1=K_2$ and $T_2=K_{1,2}$.
Note that $$\min\left\{w_H(a)+w_H(b)\colon\, a, b\in \{u_3u_4, u_3u_5, u_4u_5\}, a\neq b\right\}\leq \frac{2}{3}(h-7),$$
since otherwise
\begin{align*}
e(H)\geq &~\frac{1}{2}\left(w_H(u_3u_4)+w_H(u_3u_5)+w_H(u_3u_4)+w_H(u_4u_5)+w_H(u_3u_5)+w_H(u_4u_5)\right)+7 \\
> &~\frac{3}{2}\cdot \frac{2}{3}(h-7)+7 ~= h,
\end{align*}
a contradiction.
Let $a, b\in \{u_3u_4, u_3u_5, u_4u_5\}$ with $a\neq b$, $w_H(a)+w_H(b)\leq \frac{2}{3}(h-7) \leq \frac{4}{r}\big(h-\frac{r(r-2)}{4}+1\big)-1\leq w_{F^+}(e_3)-1$ and $w_H(a)\leq w_H(b)$.
Then $w_H(a)\leq \frac{1}{3}(h-7) \leq \frac{2}{r}\big(h-\frac{r(r-2)}{4}+1\big)-1\leq w_{F^+}(e_2)-1$.
In this case, $u_1u_2$ forms a desired $T_1$, and $a$ and $b$ form a desired $T_2$.
Hence, we can assign $(f_1, f_2, f_3)$ to be $(u_1u_2, a, b)$.

If $t_1=2$, then $T_1=K_{1,2}$ and $T_2=K_2$.
Note that $$\min\left\{w_H(u_1u_3)+w_H(u_4u_5), w_H(u_1u_4)+w_H(u_3u_5), w_H(u_1u_5)+w_H(u_3u_4)\right\}\leq \frac{1}{3}(h-4).$$
Without loss of generality, we may assume that $w_H(u_1u_3)+w_H(u_4u_5)\leq \frac{1}{3}(h-4)\leq \frac{2}{r}\big(h-\frac{r(r-2)}{4}+1\big)$.
Then $w_H(u_1u_3)\leq \frac{2}{r}\big(h-\frac{r(r-2)}{4}+1\big)-1$ and $w_H(u_4u_5)\leq \frac{2}{r}\big(h-\frac{r(r-2)}{4}+1\big)-1$.
In this case, $u_1u_2$ and $u_1u_3$ form a desired $T_1$, and $u_4u_5$ forms a desired $T_2$.
Hence, we can assign $(f_1, f_2, f_3)$ to be one of $(u_1u_2, u_1u_3, u_4u_5)$ and $(u_1u_2, u_4u_5, u_1u_3)$.
This completes the proof of Claim~\ref{cl:r=5}.
\end{proof}

We next show that we can find the desired edges $f_1, \ldots, f_{s}$ in the case $r\geq 6$.
Note that the statement of Claim~\ref{cl:r6} below is similar to that of Claim~\ref{cl:T1}.
For the convenience of readers, we now point out their differences and connections.
Claim~\ref{cl:T1} aims to find a subgraph $H_1$ of $H$ so that we can embed it into $T_1$,
while Claim~\ref{cl:r6} aims to find a subgraph $H_T$ of $H$ so that we can embed it into $T$.
Here $T_1$ is a connected component of $T$, and $H_1$ is a connected component of $H_T$.
In the proof of Claim~\ref{cl:r6}, we shall find a required $H_T$ on the basis of $H_1$ given by Claim~\ref{cl:T1}.

\begin{claim}\label{cl:r6}
If $r\geq 6$, then $H$ contains a subgraph $H_T$ such that
\begin{itemize}
\item[{\rm (i)}] the underlying graph of $H_T$ is $T$,
\item[{\rm (ii)}] the edge corresponding to $e_1$ is a critical edge of $H$, and
\item[{\rm (iii)}] each edge of $H_T$ has multiplicity at most $\frac{2}{r}\big(h-\frac{r(r-2)}{4}+1\big)-1$ in $H$.
\end{itemize}
\end{claim}

\begin{proof}
Recall that $t_1= |E(T_1)|$ and $T_2= T\setminus T_1$ is a forest with $s-t_1$ edges and at most $r-(t_1+1)$ vertices.
If $t_1=s$ (i.e., $T=T_1$), then the result follows from Claim~\ref{cl:T1}.
Hence, we may assume that $t_1\leq s-1 \leq \frac{r-1}{2}$.

Let $E_1\colonequals \big\{e\in {V(H)\choose 2}\colon\, w_H(e)>\frac{2}{r}\big(h-\frac{r(r-2)}{4}+1\big)-1\big\}.$
Then $|E_1|< \frac{r}{2}$; otherwise $$e(H)> \frac{r}{2}\left(\frac{2}{r}\left(h-\frac{r(r-2)}{4}+1\right)-1\right)+{r\choose 2}-\frac{r}{2} = h-\frac{r(r-2)}{4}+{r-1 \choose 2} > h,$$ a contradiction.
Since $|E_1|$ is an integer, we further have $|E_1|\leq s-1$.
Let $H_1$ be the subgraph of $H$ obtained by Claim~\ref{cl:T1}.
Let $H^{\ast}$ be the subgraph of $H$ with $V(H^{\ast})=V(H)\setminus V(H_1)$ and $E(H^{\ast})=\big\{e\in E(H[V(H^{\ast})])\colon\, w_H(e)\leq \frac{2}{r}\big(h-\frac{r(r-2)}{4}+1\big)-1\big\}.$
Let $H^{\ast\ast}$ be the graph with $V(H^{\ast\ast})=V(H^{\ast})=V(H)\setminus V(H_1)$ and $E(H^{\ast\ast})=E_1\cap {V(H^{\ast\ast})\choose 2}$.
Let $T_2'$ be the graph obtained from $T_2$ by adding $|V(H^{\ast\ast})|-|V(T_2)|$ isolated vertices.
In order to prove the claim, we shall show that $H^{\ast}$ contains a subgraph $H_2$ whose underlying graph is $T_2$.
It suffices to show that there is a packing of $T_2'$ and $H^{\ast\ast}$.

Note that $|V(T_2')|=|V(H^{\ast\ast})|=r-(t_1+1)$, $e(T_2')=e(T_2)=s-t_1$ and $e(H^{\ast\ast})\leq |E_1|\leq s-1$.
Then
\begin{align*}
~&~\frac{3}{2}(|V(H^{\ast\ast})|-1)-e(T_2')-e(H^{\ast\ast}) \\
\geq &~\frac{3}{2}(r-t_1-2)-(s-t_1)-(s-1) ~= \frac{3r}{2}-2s-\frac{t_1}{2}-2 \\
\geq &~\frac{3r}{2}-2s-\frac{s-1}{2}-2 ~= \frac{1}{2}(3r-5s-3).
\end{align*}
If $r\geq 6$ is even, then $3r-5s-3=3r-5\frac{r}{2}-3= \frac{r}{2}-3\geq 0$; if $r\geq 11$ is odd, then $3r-5s-3=3r-5\frac{r+1}{2}-3= \frac{r-11}{2} \geq 0$.
Combining with Theorem~\ref{th:packing-sum}, there is a packing of $T_2'$ and $H^{\ast\ast}$, unless $r\in \{7,9\}$.

If $r=9$, then $s=5$, $1\leq t_1\leq 4$, $e(T_2')=5-t_1$ and $e(H^{\ast\ast})\leq |E_1|\leq 4$.
Thus $e(T_2')e(H^{\ast\ast})\leq 4(5-t_1)<{8-t_1\choose 2} ={|V(H^{\ast\ast})|\choose 2}$.
By Theorem~\ref{th:packing-prod}, there is a packing of $T_2'$ and $H^{\ast\ast}$.

If $r=7$, then $s=4$, $1\leq t_1\leq 3$ and $e(H^{\ast\ast})\leq |E_1|\leq 3$.
Firstly, if $t_1=1$, then $|V(H^{\ast\ast})|=7-2=5$ and $e(T_2')=3$.
Since $e(T_2')e(H^{\ast\ast})\leq 9<{5\choose 2}={|V(H^{\ast\ast})|\choose 2}$, there is a packing of $T_2'$ and $H^{\ast\ast}$ by Theorem~\ref{th:packing-prod}.
Secondly, if $t_1=2$, then $|V(H^{\ast\ast})|=7-3=4$ and $e(T_2)=2$.
Let $V(H_1)=\{u_1, u_2, u_3\}$, $V(H)\setminus V(H_1)=\{u_4, u_5, u_6, u_7\}$, and $u_1u_2$ be a critical edge.
If $\big|E_1\cap {V(H)\setminus V(H_1) \choose 2}\big|\leq 2$, then one can easily find a desired $T_2$ within $V(H)\setminus V(H_1)$.
If $\big|E_1\cap {V(H)\setminus V(H_1) \choose 2}\big|=3$, then $E_1\cap \{u_iu_j\colon\, i\in [3], j\in \{4, 5, 6, 7\}\}= \emptyset$.
Since ${|V(H)\setminus V(H_1)|\choose 2}=6>3$, we may assume that $u_4u_5\notin E_1$ without loss of generality.
Then $u_1u_2$ and $u_1u_6$ form a desired $T_1$, and $u_4u_5$ and one edge in $\{u_3u_4, u_3u_7\}$ form a desired $T_2$.
Finally, if $t_1=3$, then $T_1\in \{K_{1,3}, P_4\}$, $T_2=K_2$ and $|V(H^{\ast\ast})|=7-4=3$.
Let $V(H_1)=\{u_1, u_2, u_3, u_4\}$, $V(H)\setminus V(H_1)=\{u_5, u_6, u_7\}$, and $u_1u_2$ be a critical edge.
If ${V(H)\setminus V(H_1) \choose 2}\setminus E_1\neq \emptyset$, then we can find a desired $T_2$ within $V(H)\setminus V(H_1)$.
If ${V(H)\setminus V(H_1) \choose 2}\setminus E_1= \emptyset$, then $E_1\cap \big({V(H)\choose 2}\setminus {V(H)\setminus V(H_1) \choose 2}\big)=\emptyset$.
Then one can easily find a desired $T$ consisting of edges in ${V(H)\choose 2}\setminus {V(H)\setminus V(H_1) \choose 2}$.
This completes the proof of Claim~\ref{cl:r6}.
\end{proof}

By Claims~\ref{cl:r=5}, \ref{cl:r6} and Inequality~(\ref{eq:e_i}), there exist edges $f_1, \ldots, f_{s} \in {V(H)\choose 2}$ and an embedding $\phi: V(H) \to V(F^+)$ such that for each $i\in [s]$,
\begin{itemize}
\item[(i)] $\phi(f_i)=e_i$, and
\item[(ii)] $w_{F^{+}}(e_i)\geq \sum_{j=1}^{i}w_H(f_j)$.
\end{itemize}
Recall that $e_{s+1}$ is an edge in $\{v_rv_j \colon\, 1\leq j\leq r-1\}$ with $w_{F^+}(e_{s+1})=\min_2\{w_{F^+}(v_rv_j)\colon\, 1\leq j\leq r-1\}$, and $e_{s+2}, e_{s+3}, \ldots, e_{{r\choose 2}}$ is the ordering of edges in ${V(F^{+})\choose 2}\setminus E(T^{+})$ obtained by Claim~\ref{cl:embedding}.
For each $s+1\leq i\leq {r\choose 2}$, let $f_i\colonequals \phi^{-1}(e_i)$.
By Inequalities~(\ref{eq:embed}) and (\ref{eq:es+1}), we have $w_{F^{+}}(e_i)\geq \sum_{j=1}^{i}w_H(f_j)$ for each $s+1\leq i\leq {r\choose 2}$.
By Lemma~\ref{le:Hall}, there is a multicolored copy of $H$ in $F^+$.
Thus $F$ is $H$-friendly.
By Lemma~\ref{le:partite}, $G$ is $(r-1)$-partite.
This completes the proof of Lemma~\ref{le:r_vertex-degree}.
\hfill$\square$
\vspace{0.2cm}


\section{Step II: completing the proof via stability}
\label{sec:any_vertex}

In this section, we finish the proof of Theorem~\ref{th:main}.
First, we prove a stability result for $r$-vertex $r$-color-critical multigraphs (i.e., Lemma~\ref{le:r_vertex-stability}) using the ideas in \cite{CKLLS,RoSc}.
Second, we extend the stability result to $r$-color-critical graphs with any number of vertices (i.e., Lemma~\ref{le:any_vertex-stability}) using the multicolor version of Szemer\'{e}di's Regularity Lemma and the Embedding Lemma.
Last, we prove Theorem~\ref{th:main+}, which implies Theorem~\ref{th:main} immediately.

We will use the following observation which follows from a straightforward calculation.

\begin{observation}{\normalfont (\cite[Proposition~6.2]{CKLLS})}\label{ob:G-B}
Suppose $0<\frac{1}{n}\ll \delta <1$. Let $G$ be an $n$-vertex multigraph with $d{n\choose 2}$ edges.
If $B\subseteq V(G)$ is a vertex set with $|B|=\frac{1}{2}\delta n$ and every $v\in B$ satisfies $d_G(v)<(1-\delta)dn$, then $G-B$ has at least $\big(1+\frac{1}{2}\delta^2\big)d{|V(G-B)|\choose 2}$ edges.
\end{observation}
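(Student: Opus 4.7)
The plan is a direct computation. Let $m \colonequals |V(G-B)| = n - \tfrac{1}{2}\delta n = (1-\tfrac{\delta}{2})n$. The first step is to upper bound the number of edges of $G$ with at least one endpoint in $B$. Counting endpoints we have
\[
e(G[B]) + e(B, V(G)\setminus B) \leq \sum_{v\in B} d_G(v) < |B|(1-\delta)dn = \tfrac{\delta(1-\delta)}{2}\, dn^2,
\]
using the hypothesis $d_G(v) < (1-\delta)dn$ for every $v\in B$.

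The second step is to subtract this from $e(G)$ to bound $e(G-B)$ from below:
\[
e(G-B) = e(G) - \bigl(e(G[B]) + e(B, V(G)\setminus B)\bigr) > d\binom{n}{2} - \tfrac{\delta(1-\delta)}{2}\, dn^2.
\]

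The third step is to verify the target inequality
\[
d\binom{n}{2} - \tfrac{\delta(1-\delta)}{2}\, dn^2 \geq \bigl(1+\tfrac{1}{2}\delta^2\bigr)\, d\binom{m}{2}.
\]
Dividing by $d$ and expanding via $\binom{n}{2}=\tfrac{n^2}{2}-\tfrac{n}{2}$ and $\binom{m}{2}=\tfrac{(1-\delta/2)^2 n^2}{2}-\tfrac{(1-\delta/2)n}{2}$, one sees that the leading $n^2$ coefficient of the left minus the right equals
\[
\tfrac{1}{2}\bigl(1 - \delta(1-\delta)\bigr) - \tfrac{1}{2}\bigl(1+\tfrac{\delta^2}{2}\bigr)\bigl(1-\tfrac{\delta}{2}\bigr)^2 = \tfrac{\delta^2}{8}\bigl(1 + 2\delta - \tfrac{\delta^2}{2}\bigr),
\]
which is positive and of order $\delta^2$, while the lower-order terms contribute only $O(n)$. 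Since $\tfrac{1}{n}\ll \delta$, the $\Theta(\delta^2 n^2)$ gap dominates the $O(n)$ correction, yielding the claimed bound.

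There is no real obstacle here; the only thing to check is that the absorbed lower-order terms are indeed negligible compared to $\delta^2 n^2$, which is exactly what the hierarchy $\tfrac{1}{n}\ll \delta$ provides.
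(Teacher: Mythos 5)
Your proof is correct and is precisely the "straightforward calculation" the paper alludes to: bound the edges touching $B$ by $\sum_{v\in B}d_G(v)$, subtract from $e(G)$, and compare against $(1+\tfrac{1}{2}\delta^2)d\binom{m}{2}$ with $m=(1-\tfrac{\delta}{2})n$. The leading coefficient $\tfrac{\delta^2}{8}\bigl(1+2\delta-\tfrac{\delta^2}{2}\bigr)$ is computed correctly, and the discarded terms are in fact $\Theta(\delta n)$ (even better than the $O(n)$ you claim), so the hierarchy $\tfrac{1}{n}\ll\delta$ certainly suffices.
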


For two multigraphs $G$ and $H$ of the same order, the {\it symmetric difference} of $G$ and $H$ is defined by $$|G \triangle H|\colonequals \min\limits_{\substack{H'\cong H, \\ V(H')=V(G)}}~\sum_{e\in {V(G)\choose 2}}|w_G(e)-w_{H'}(e)|.$$

\begin{lemma}\label{le:r_vertex-stability}
Let $r\geq 5$, $k> \max\big\{\frac{r-1}{r-2}(h-1), 2\frac{r-1}{r}\big(h-\frac{r(r-2)}{4}+1\big)\big\}$ and $0<\frac{1}{\sqrt{n}}\leq \eta \ll \eta_1, \frac{1}{k}<1$.
Let $H$ be an $r$-vertex $r$-color-critical multigraph with $h$ edges.
Let $G$ be an $n$-vertex simply $k$-colored multicolored-$H$-free multigraph with $e(G)\geq k\cdot t_{r-1}(n)-\eta n^2$.
Then $|G\triangle (k\cdot T_{r-1}(n))|\leq \eta_1 n^2$.
\end{lemma}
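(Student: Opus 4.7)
The plan is to use the standard stability strategy: remove a small set of low-degree vertices to reduce to a subgraph $G'$ with nearly extremal minimum degree, apply Lemma~\ref{le:r_vertex-degree} to conclude $G'$ is $(r-1)$-partite, and then extend the partition back to $V(G)$ and read off the symmetric difference bound.

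First I introduce an auxiliary constant $\delta$ sitting in the hierarchy $\eta \ll \delta^2 k$ and $\delta k \ll \eta_1$; both conditions can be met simultaneously because $\eta \ll \eta_1, 1/k$. I then iteratively delete from the current multigraph any vertex of degree strictly less than $(1-\delta) k \delta(T_{r-1}(n'))$, where $n'$ is the current order. Let $B$ denote the set of deleted vertices, let $G' = G - B$, and let $n' = |V(G')|$. By construction, $\delta(G') \geq (1-\delta) k \delta(T_{r-1}(n'))$, and $G'$ is still multicolored-$H$-free.

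To bound $|B|$ I track the edges lost during this deletion: each removed vertex sheds fewer than $(1-\delta) k \tfrac{r-2}{r-1}(n - i)$ edges at step $i$ by \eqref{eq:degree_T_r-1}, giving $e(G) - e(G') < (1-\delta) k \tfrac{r-2}{r-1}\bigl(n|B| - \binom{|B|}{2}\bigr)$. Combining with $e(G) \geq k t_{r-1}(n) - \eta n^2$ and, by Theorem~\ref{th:r_vertex} applied to $G'$, $e(G') \leq k t_{r-1}(n')$, and expanding both Tur\'an numbers via \eqref{eq:t_r-1}, the main terms collapse to $k \tfrac{r-2}{r-1} \delta n |B| \leq \eta n^2 + O(kn)$, hence $|B| = O\bigl(\eta n/(\delta k)\bigr) \leq \delta n / 2$ under $\eta \ll \delta^2 k$. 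Since $\delta \ll 1/k$ and $n'$ remains large, Lemma~\ref{le:r_vertex-degree} applies to $G'$ and yields a partition $V_1, \ldots, V_{r-1}$ of $V(G')$ across which $G'$ has no edge.

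Next I extract the structural consequences. As $e(G') \geq k t_{r-1}(n') - \eta n^2$ and $e(G') \leq k \sum_{i<j}|V_i||V_j| \leq k t_{r-1}(n')$, both the imbalance $t_{r-1}(n') - \sum_{i<j}|V_i||V_j|$ and the total color-wise defect $\sum_c \bigl(\sum_{i<j}|V_i||V_j| - e(G'_c)\bigr)$ are at most $\eta n^2$; standard convexity then gives $\bigl||V_i| - n'/(r-1)\bigr| = O\bigl(n\sqrt{\eta/k}\bigr)$ for each $i$. I distribute the $|B| \leq \delta n/2$ vertices of $B$, with a few further swaps if necessary, to upgrade $V_1, \ldots, V_{r-1}$ to a partition $V_1^*, \ldots, V_{r-1}^*$ of $V(G)$ whose sizes match those of $T_{r-1}(n)$, moving only $O\bigl(n\sqrt{\eta/k} + |B|\bigr)$ vertices overall. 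Aligning a copy $H'$ of $k \cdot T_{r-1}(n)$ to this partition, the within-part contribution to $|G \triangle H'|$ comes only from edges incident to the moved or newly placed vertices and is bounded by $O\bigl(k|B| n + \sqrt{\eta k}\, n^2\bigr) = O\bigl(k\delta n^2 + \sqrt{\eta k}\, n^2\bigr)$, while the between-part deficit contributes at most $k t_{r-1}(n) - e(G) \leq \eta n^2$. Under $\delta k \ll \eta_1$ and $\sqrt{\eta k} \ll \eta_1$ (implied by the hierarchy), these sum to at most $\eta_1 n^2$.

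The main obstacle I expect is the calibration of $\delta$: we need $\delta$ large enough for the clean-up to terminate quickly ($\eta \ll \delta^2 k$) and small enough that both the within-part contribution ($\delta k \ll \eta_1$) and the rebalancing cost ($\sqrt{\eta k} \ll \eta_1$, equivalently $\eta \ll \eta_1^2 / k$) remain negligible. This amounts to squeezing $\delta$ between $\sqrt{\eta/k}$ and $\eta_1/k$, which is feasible thanks to the hypothesized $\eta \ll \eta_1, 1/k$.
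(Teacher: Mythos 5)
Your proof is correct and follows essentially the same stability strategy as the paper: delete a small set of low-degree vertices, bound its size via Theorem~\ref{th:r_vertex}, apply Lemma~\ref{le:r_vertex-degree} to obtain $(r-1)$-partiteness, then balance the parts and read off the symmetric difference. The only variation --- your iterative deletion with a dynamic degree threshold and direct edge-counting versus the paper's one-shot low-degree set $L$ together with Observation~\ref{ob:G-B} --- is a matter of bookkeeping rather than a different route.
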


\begin{proof}
Let $\delta$ be such that $0<\eta \ll \delta \ll \eta_1, \frac{1}{k}<1$.
We first show that $G$ contains a subgraph of order at least $(1-\delta^{1/2})n$ with minimum degree at least $(1-\delta^{1/2})k\delta(T_{r-1}(n))$.
Let $d>0$ be the number such that $e(G)=d{n\choose 2}$.
Since $e(G)\geq k\cdot t_{r-1}(n)-\eta n^2\geq k\frac{r-2}{r-1}{n\choose 2}-\eta n^2$, we have $d\geq k\frac{r-2}{r-1}-3\eta$.
Let $L\colonequals \{v\in V(G)\colon\, d(v)< (1-\delta)dn\}$.

\begin{claim}\label{cl:L} $|L|<\delta\frac{n}{2}.$
\end{claim}

\begin{proof} Suppose for a contradiction that $|L|\geq \delta\frac{n}{2}$.
We choose $B\subseteq L$ with $|B|=\delta \frac{n}{2}$.
By Observation~\ref{ob:G-B} and since $\frac{1}{\sqrt{n}}\leq \eta \ll \delta$, we have
\begin{align*}
 e(G-B)\geq &~\left(1+\frac{1}{2}\delta^2\right)d{|V(G-B)|\choose 2} ~\geq \left(1+\frac{1}{2}\delta^2\right)\left(k\frac{r-2}{r-1}-3\eta\right){|V(G-B)|\choose 2} \\
  > &~\left(k\frac{r-2}{r-1}-3\eta + \frac{1}{2}\delta^2\right){|V(G-B)|\choose 2} ~> \left(k\frac{r-2}{r-1}+ \frac{1}{3}\delta^2\right){|V(G-B)|\choose 2} \\
  > &~k\cdot t_{r-1}(|V(G-B)|).
\end{align*}
On the other hand, since $G$ (and thus $G-B$) is multicolored-$H$-free, we have $e(G-B)\leq k\cdot t_{r-1}(|V(G-B)|)$ by Theorem~\ref{th:r_vertex}.
This contradiction completes the proof.
\end{proof}

Let $M\colonequals G-L$.
By Claim~\ref{cl:L}, we have $|L|<\delta\frac{n}{2}.$
Combining with $r\geq 5$, $k> \frac{r-1}{r-2}(h-1)$, $0<\eta \ll \delta \ll \frac{1}{k}<1$ and Inequalities~(\ref{eq:t_r-1}) and (\ref{eq:degree_T_r-1}), we have
\begin{align}\label{eq:dM}
 \delta(M)\geq &~(1-\delta)dn-k|L| ~> (1-\delta)\left(k\frac{r-2}{r-1}-3\eta\right)n-k\delta\frac{n}{2} \nonumber\\
 = &~\left(k\frac{r-2}{r-1}-3\eta-\left(k\left(\frac{r-2}{r-1}+\frac{1}{2}\right)-3\eta\right)\delta\right)n ~\geq \left(1-\delta^{1/2}\right)k\frac{r-2}{r-1}n \nonumber\\
 \geq &~\left(1-\delta^{1/2}\right)k\delta(T_{r-1}(n)) ~\geq \left(1-\delta^{1/2}\right)k\delta(T_{r-1}(|V(M)|)),
\end{align}
and
\begin{align}\label{eq:eM}
  e(M)\geq &~\frac{1}{2}\delta(M)|V(M)| ~> \frac{1}{2}\left(1-\delta^{1/2}\right)k\delta(T_{r-1}(|V(M)|))|V(M)| \nonumber\\
 \geq &~\frac{1}{2}\left(1-\delta^{1/2}\right)k\frac{r-2}{r-1}(|V(M)|-1)|V(M)| ~= \left(1-\delta^{1/2}\right)k\frac{r-2}{r-1}{|V(M)|\choose 2} \nonumber\\
 \geq &~k\frac{r-2}{r-1}\cdot \frac{|V(M)|^2}{2}-\delta^{2/5}n^2 ~\geq k\cdot t_{r-1}(|V(M)|)-\delta^{2/5}n^2.
\end{align}
By Inequality~(\ref{eq:dM}) and Lemma~\ref{le:r_vertex-degree}, we can deduce that $M$ is $(r-1)$-partite.
Let $V_1, \ldots, V_{r-1}$ be the partite sets of $M$.
We claim that for each $i\in [r-1]$, we have $\big||V_i|-\frac{|V(M)|}{r-1}\big|\leq 2\delta^{1/5}n.$
Otherwise, there exist some $i\neq j$ such that $|V_i|-|V_j|>2\delta^{1/5}n$, which implies that $e(M)\leq k\cdot t_{r-1}(|V(M)|)-\delta^{2/5}n^2$, contradicting Inequality~(\ref{eq:eM}).
Then, by deleting at most $k(r-1)(2\delta^{1/5}n)n<\delta^{1/6}n^2$ edges of $M$, we obtain an $(r-1)$-partite multigraph $M'$ with class sizes equal to that of $T_{r-1}(|V(M)|)$, and $$e(M')\geq e(M)-\delta^{1/6}n^2>k\cdot t_{r-1}(|V(M)|)-\delta^{2/5}n^2-\delta^{1/6}n^2 > k\cdot t_{r-1}(|V(M)|)- 2\delta^{1/6}n^2.$$
Then $$|G\triangle (k\cdot T_{r-1}(n))|\leq |M\triangle (k\cdot T_{r-1}(|V(M)|))|+k|L|n < \delta^{1/6}n^2 + 2\delta^{1/6}n^2 + k\delta\frac{n^2}{2}< \eta_1 n^2.$$
The proof is complete.
\end{proof}

In order to extend the stability result from $r$-vertex $r$-color-critical multigraphs to $r$-color-critical graphs with any number of vertices, we shall use the multicolor version of Szemer\'{e}di's Regularity Lemma \cite{Sze} and the Embedding Lemma.
Similar results for edge-colored graphs were also used in \cite{HoLO17SIAM-DM}.
Let $G$ be a simply $k$-colored multigraph.
The {\it edge density} of $G$ is defined as $d(G)\colonequals \frac{e(G)}{|V(G)|^2}$.
For two disjoint nonempty vertex sets $X$, $Y$ and a color $\rho$, the {\it $\rho$-density} of $(X,Y)$ is defined to be $d_{\rho}(X,Y)\colonequals \frac{e_{\rho}(X,Y)}{|X||Y|},$ where $e_{\rho}(X,Y)$ is the number of edges between $X$ and $Y$ with color $\rho$.
For $\varepsilon >0$, the pair $(X,Y)$ is {\it $\varepsilon$-regular} if for every $X'\subseteq X$ and $Y'\subseteq Y$ with $|X'|\geq \varepsilon |X|$ and $|Y'|\geq \varepsilon |Y|$, we have $|d_{\rho}(X,Y)-d_{\rho}(X',Y')|\leq \varepsilon$ for every color $\rho$.
A partition $\mathcal{P}=(V_1, \ldots, V_m)$ of $V(G)$ is an {\it $\varepsilon$-regular partition} of $G$ if
\begin{itemize}
\item $\left||V_i|-|V_j|\right|\leq 1$ for all $1\leq i<j\leq m$, and
\item $(V_i, V_j)$ is $\varepsilon$-regular for all but at most $\varepsilon m^2$ pairs $(i,j)$.
\end{itemize}
For $\varepsilon, \gamma >0$ and a color $\rho$, the pair $(X,Y)$ is {\it $(\varepsilon, \gamma; \rho)$-lower-regular} if for every $X'\subseteq X$ and $Y'\subseteq Y$ with $|X'|\geq \varepsilon |X|$ and $|Y'|\geq \varepsilon |Y|$, we have $d_{\rho}(X',Y')\geq \gamma$.
Given $\varepsilon, \gamma >0$ and an $\varepsilon$-regular partition $\mathcal{P}=(V_1, \ldots, V_m)$ of $G$, we define the {\it $(\varepsilon, \gamma, \mathcal{P})$-reduced multigraph} $R$ as follows:
$R$ is a simply $k$-colored multigraph with colors $\{R_1, \ldots, R_k\}$ and vertex set $[m]$, and for each $ij\in {[m]\choose 2}$ and $\rho\in [k]$, we have $ij \in E(R_{\rho})$ if and only if $(V_i,V_j)$ is $(\varepsilon, \gamma; \rho)$-lower-regular.

\begin{lemma}[Multicolor Regularity Lemma \cite{CKLLS,KoSi,Sze}]\label{le:regulartity}
For any $\varepsilon >0$ and integers $k, M_0\geq 1$, there exist $n'$ and $M$ such that every simply $k$-colored multigraph $G$ on $n\geq n'$ vertices admits an $\varepsilon$-regular partition $\mathcal{P}=(V_1, \ldots, V_m)$ with $M_0\leq m\leq M$.
Moreover, for $\gamma >0$, the density of the $(\varepsilon, \gamma, \mathcal{P})$-reduced multigraph $R$ satisfies $d(R)\geq d(G)-2(\varepsilon + \gamma)$.
\end{lemma}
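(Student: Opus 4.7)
I would derive this result from the classical single-color Szemer\'edi Regularity Lemma by iterating across the $k$ colors. Choose a cascade $\varepsilon_1 \ll \varepsilon_2 \ll \dots \ll \varepsilon_k = \varepsilon$. Start by applying the ordinary SRL to $G_1$ with parameter $\varepsilon_1$, obtaining an $\varepsilon_1$-regular equipartition $\mathcal{P}_1$. Then apply SRL to $G_2$ with parameter $\varepsilon_2$, insisting that the output refines $\mathcal{P}_1$; this ``refinement'' version of SRL follows directly from Szemer\'edi's mean-square-density monotonicity argument, since a partition's index is non-decreasing under refinement. Iterate for $\rho = 3, \dots, k$. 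A refinement of an $\varepsilon_\rho$-regular pair remains approximately regular in color $\rho$ with only a mild blow-up of the parameter, so the cascade can be chosen in advance to guarantee that $\mathcal{P}_k$ is $\varepsilon$-regular in every color simultaneously. After a final rebalancing into an equipartition (absorbing the tiny imbalance into $\varepsilon$), one obtains the desired partition with $M_0 \leq m \leq M$, where $M$ is an iterated tower in $1/\varepsilon$, $k$, and $M_0$.

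\textbf{Density bound.} For the density claim I would decompose $e(G)$ into three contributions: edges inside a single part, edges between non-regular pairs, and edges between regular pairs. The first contributes $O(kn^2/m)$ (negligible once $m$ is large), and the second contributes $O(k\varepsilon n^2)$ by the regularity guarantee. For a regular pair $(V_i,V_j)$ in color $\rho$, one of two alternatives holds: either $(V_i,V_j)$ is $(\varepsilon,\gamma;\rho)$-lower-regular, in which case $ij \in E(R_\rho)$ and the color-$\rho$ edge count is at most $|V_i||V_j|$; or it is not, in which case the $\varepsilon$-regularity of $(V_i,V_j)$ in color $\rho$ together with the failure witness forces $d_\rho(V_i,V_j) < \gamma + \varepsilon$, bounding the color-$\rho$ count by $(\gamma+\varepsilon)|V_i||V_j|$. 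Summing these estimates across all colors and regular pairs and dividing by $n^2$ yields $d(G) \leq d(R) + 2(\varepsilon + \gamma) + o_m(1)$, where the factor $2$ arises from combining the $\varepsilon$ slack of the regularity condition with the $\gamma$ threshold of the lower-regularity condition. Choosing $M_0$ large enough absorbs the $o_m(1)$ term to give the stated bound.

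\textbf{Main obstacle.} The most delicate ingredient is controlling the degradation of the regularity parameter across the iterated refinements: a sub-rectangle of an $\eta$-regular pair is only $\eta'$-regular for some $\eta' > \eta$ depending on how small the sub-rectangle is relative to the original pair. To ensure that the final partition $\mathcal{P}_k$ is $\varepsilon$-regular in all $k$ colors simultaneously, the cascade $\varepsilon_1 \ll \dots \ll \varepsilon_k$ must be chosen in advance to absorb the cumulative blow-up across $k$ refinement steps. This forces $M$ to be an iterated tower of height roughly $k$ rather than a single tower in $1/\varepsilon$; but since $k$ and $\varepsilon$ are constants independent of $n$, this bound still suffices for the subsequent embedding and stability applications of the paper.
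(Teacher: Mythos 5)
The paper does not prove this lemma; it is imported wholesale from \cite{KoSi,Sze,CKLLS}, so there is no in-paper argument to compare against. Evaluating your attempt on its own terms: the density-bound paragraph is essentially the right argument, but the regularity part has a genuine gap that the ``main obstacle'' discussion does not repair.

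The iteration scheme fails for a structural reason, not a bookkeeping one. The sub-rectangle fact says that if $(A,B)$ is $\eta$-regular and $A'\subseteq A$, $B'\subseteq B$ with $|A'|\geq\alpha|A|$, $|B'|\geq\alpha|B|$, then $(A',B')$ is $\max\{2\eta,\eta/\alpha\}$-regular. When you refine $\mathcal{P}_1$ (with $m_1$ parts) to $\mathcal{P}_2$ (with $m_2$ parts) by applying SRL to $G_2$, a typical part of $\mathcal{P}_1$ is split into about $m_2/m_1$ pieces, so $\alpha\approx m_1/m_2$ and the inherited regularity parameter for color $1$ is roughly $\varepsilon_1 m_2/m_1$. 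Since $m_2$ is at least a tower of height $\mathrm{poly}(1/\varepsilon_2)$ above $m_1$, you would need $\varepsilon_1\leq \varepsilon\, m_1/m_2$; but $m_1$ itself is a tower in $1/\varepsilon_1$, and the quantity $\varepsilon_1 m_2/m_1$ diverges as $\varepsilon_1\to 0$, so no choice of cascade fixes this. More to the point, $\varepsilon_1$-regularity of $(V_i,V_j)$ says nothing about the internal structure of subsets smaller than an $\varepsilon_1$-fraction, so a sub-pair consisting of much smaller cells can be \emph{arbitrarily} irregular in color $1$ no matter how regular the ambient pair was. Thus $\mathcal{P}_k$ need not be regular for colors $1,\ldots,k-1$ at all. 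The standard proof does not iterate the single-color SRL but rather runs the energy-increment argument \emph{once}, using as potential the sum over all $k$ colors of the usual mean-square density; this is bounded by $k$, is nondecreasing under refinement in every coordinate, and increases by a fixed amount whenever the current equipartition is not simultaneously $\varepsilon$-regular in all colors (a pigeonhole step selects a color with many bad pairs). That yields a single tower bound, not an iterated one.

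On the density bound: your decomposition into in-part edges, irregular pairs, regular-but-not-lower-regular pairs (density $<\gamma+\varepsilon$ by the regularity witness), and lower-regular pairs (counted by $R$) is correct and is the intended argument. One caveat: summing the three error terms over all $k$ colors gives an error of order $k(\varepsilon+\gamma+1/m)$, not $2(\varepsilon+\gamma)$; the lemma as stated must either absorb the $k$ into the constants or be read with $\varepsilon,\gamma$ already chosen small relative to $1/k$ (which is how the paper applies it, via the hierarchy $\varepsilon\ll\gamma\ll\eta\ll 1/k$), so this is not a real problem for the application, but your claim that the calculation ``yields'' exactly $2(\varepsilon+\gamma)$ glosses over the $k$-dependence.
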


Given a simply $k$-colored multigraph $G$ with an $\varepsilon$-regular partition $\mathcal{P}=(V_1, \ldots, V_m)$ and $(\varepsilon, \gamma, \mathcal{P})$-reduced multigraph $R$, we define a simply $k$-colored multigraph $G^{\mathcal{P}}=G^{\mathcal{P}}(\varepsilon, \gamma)$ as follows.
Let $G^{\mathcal{P}}$ be a simply $k$-colored multigraph with colors $\{G^{\mathcal{P}}_1, \ldots, G^{\mathcal{P}}_k\}$ and vertex set $V(G)$, in which for each $ij\in {[m]\choose 2}$ and $\rho\in [k]$, the bipartite graph $G^{\mathcal{P}}_{\rho}[V_i,V_j]$ is a complete bipartite graph if $ij\in E(R_{\rho})$, and an empty bipartite graph if $ij\notin E(R_{\rho})$, and there is no edge within each part.

\begin{lemma}[Multicolor Embedding Lemma \cite{CKLLS,KoSi}]\label{le:embedding}
Suppose $0 \leq \frac{1}{n} \ll \varepsilon \ll \gamma, \frac{1}{h}\leq 1$.
Let $H$ be an $r$-vertex $h$-edge multigraph, and $G$ be a simply $k$-nested-colored multigraph with an $\varepsilon$-regular partition $\mathcal{P}$.
If $G^{\mathcal{P}}(\varepsilon, \gamma)$ contains a multicolored copy of $H$, then $G$ contains a multicolored copy of $H$.
\end{lemma}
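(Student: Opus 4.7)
The plan is to take the given multicolored embedding of $H$ in $G^{\mathcal{P}}$, extract from it a part-assignment and color-assignment template, and then upgrade the template to a genuine multicolored embedding in $G$ by a standard greedy procedure driven by lower-regularity.

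First I fix the template. Let $\phi\colon V(H) \to V(G^{\mathcal{P}})$ be the given multicolored embedding. Define $\pi\colon V(H) \to [m]$ by requiring $\phi(v) \in V_{\pi(v)}$, and for every edge-instance $uv$ of $H$ let $c(uv) \in [k]$ be its color under $\phi$; the $c(uv)$ are pairwise distinct. By construction of $G^{\mathcal{P}}$, the relation $\phi(u)\phi(v) \in E(G^{\mathcal{P}}_{c(uv)})$ forces $\pi(u)\pi(v) \in E(R_{c(uv)})$, which precisely means that $(V_{\pi(u)}, V_{\pi(v)})$ is an $(\varepsilon, \gamma; c(uv))$-lower-regular pair in $G$ for every edge-instance $uv$ of $H$.

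Next I build the genuine embedding $\phi'\colon V(H) \to V(G)$ one vertex at a time. Fix an arbitrary enumeration $V(H) = \{v_1, \ldots, v_r\}$ and, for each not-yet-embedded vertex $v_j$, maintain a candidate set $C_j \subseteq V_{\pi(v_j)}$ consisting of those vertices that are already $c(v_\ell v_j)$-adjacent in $G$ to $\phi'(v_\ell)$ for every embedded $v_\ell$ and every edge-instance $v_\ell v_j$. The quantitative consequence of lower-regularity I need is the following: whenever $(V_a, V_b)$ is $(\varepsilon, \gamma; \rho)$-lower-regular and $S \subseteq V_b$ satisfies $|S| \geq \varepsilon|V_b|$, at most $\varepsilon|V_a|$ vertices of $V_a$ have fewer than $\gamma|S|$ $\rho$-neighbors in $S$ (otherwise the offending set would violate the lower-regular density condition). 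Applied to each of the at most $h$ future edge-instances incident to $v_i$, this shows that the set of \emph{bad} choices in $V_{\pi(v_i)}$ has size at most $h\varepsilon|V_{\pi(v_i)}|$. Because each $C_j$ is updated at most $h$ times during the whole procedure, and each update made via a good choice shrinks $C_j$ by at most a factor of $\gamma$, the invariant $|C_j| \geq \gamma^{h}|V_{\pi(v_j)}|$ persists throughout; since $\varepsilon \ll \gamma, 1/h$ and $n$ is large, $\gamma^{h}|V_{\pi(v_i)}| > h\varepsilon|V_{\pi(v_i)}| + r$, and a valid choice of $\phi'(v_i)$ always exists.

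Iterating for $i = 1, \ldots, r$ yields an embedding $\phi'$ with $\phi'(u)\phi'(v) \in E(G_{c(uv)})$ for every edge-instance $uv$ of $H$; since the $c(uv)$ are distinct, $\phi'(H)$ is the sought multicolored copy of $H$ in $G$. The only technical care required lies in the interlocked induction that simultaneously keeps both $|C_j| \geq \gamma^{h}|V_{\pi(v_j)}|$ (to absorb future shrinkage) and $|C_j| \geq \varepsilon|V_{\pi(v_j)}|$ (to legitimately invoke the lower-regular condition at each step); this is the routine bookkeeping underlying the classical embedding lemma, and it goes through cleanly once the hierarchy $\frac{1}{n} \ll \varepsilon \ll \gamma, \frac{1}{h}$ is respected. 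I expect no deeper obstacle: the nested-coloring hypothesis on $G$ is not actually needed inside this lemma, and the multigraph nature of $H$ only adds the harmless requirement that each parallel edge be mapped to its own prescribed color, which is guaranteed by the per-color lower-regularity inherited from $G^{\mathcal{P}}$.
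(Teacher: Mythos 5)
The paper states this lemma as a citation to \cite{CKLLS,KoSi} and does not supply an in-text proof, so there is no argument of the paper's to compare against directly; what matters is whether your reconstruction is sound, and it is. The template extraction from $\phi$ is correct, including the implicit but necessary observation that $\pi(u)\neq\pi(v)$ whenever $uv$ is an edge-instance of $H$, since $G^{\mathcal{P}}$ places no edges inside a part. The quantitative consequence of $(\varepsilon,\gamma;\rho)$-lower-regularity you invoke (fewer than $\varepsilon|V_a|$ vertices of $V_a$ can have $\rho$-degree below $\gamma|S|$ into a subset $S\subseteq V_b$ of size $\geq\varepsilon|V_b|$) is exactly right, and the greedy vertex-by-vertex placement with the twin invariants $|C_j|\geq\gamma^{h}|V_{\pi(v_j)}|$ and $|C_j|\geq\varepsilon|V_{\pi(v_j)}|$, together with the bad-vertex count $h\varepsilon|V_{\pi(v_i)}|+r$, is the standard embedding argument; the hierarchy $\frac{1}{n}\ll\varepsilon\ll\gamma,\frac{1}{h}$ makes the required numeric inequalities trivial. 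Your remark that the nested-coloring hypothesis is not used inside the lemma is also correct: in this paper nestedness is exploited only in the proof of Lemma~\ref{le:any_vertex-stability}, to ensure that the reduced multigraph $R$ and $G^{\mathcal{P}}$ inherit nestedness, not to carry out the embedding itself. One small point you might make explicit for a fully rigorous write-up: when $w_H(uv)\geq 2$, the $w_H(uv)$ parallel edge-instances carry distinct colors and each contributes one $\gamma$-factor of shrinkage to the relevant candidate set, so the total number of shrinkages of $C_j$ equals $d_H(v_j)$ counted with multiplicity; your bound $d_H(v_j)\leq h$ absorbs this, so the conclusion is unaffected.
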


Now we prove a stability result for color-critical simple graphs with any number of vertices.

\begin{lemma}\label{le:any_vertex-stability}
Let $r\geq 5$, $k> \max\big\{\frac{r-1}{r-2}(h-1), 2\frac{r-1}{r}\big(h-\frac{r(r-2)}{4}+1\big)\big\}$ and $0<\frac{1}{n}\ll \eta \ll \mu, \frac{1}{k}<1$.
Let $H$ be an $r$-color-critical graph with $h$ edges.
Let $G$ be an $n$-vertex simply $k$-nested-colored multicolored-$H$-free multigraph with $e(G)\geq k\cdot t_{r-1}(n)-\eta n^2$.
Then $|G\triangle (k\cdot T_{r-1}(n))|\leq \mu n^2$.
\end{lemma}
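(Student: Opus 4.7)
The plan is to apply the multicolor Regularity Lemma to $G$, invoke Lemma~\ref{le:r_vertex-stability} on the reduced multigraph, and then lift the resulting structural information back to $G$. I first fix a critical $r$-coloring $c$ of $H$ with classes $V_1,\ldots,V_r$ and let $H_c$ denote the associated color-reduced multigraph, which as noted in Section~\ref{sec:proof_sketch} is an $r$-vertex $r$-color-critical multigraph with $e(H_c)=h$. Choose auxiliary constants with
\[
\tfrac{1}{n}\ll\tfrac{1}{M}\ll\varepsilon\ll\gamma\ll\eta_0\ll\eta_1\ll\tfrac{\mu}{k}\qquad\text{and}\qquad\eta\ll\eta_0,
\]
and apply Lemma~\ref{le:regulartity} to $G$ with parameter $\varepsilon$ and a sufficiently large $M_0$, obtaining an $\varepsilon$-regular partition $\mathcal{P}=(W_1,\ldots,W_m)$ with $M_0\le m\le M$. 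Let $R$ be the $(\varepsilon,\gamma,\mathcal{P})$-reduced multigraph. The density bound in Lemma~\ref{le:regulartity}, together with $d(G)\ge k\tfrac{r-2}{2(r-1)}-2\eta$ and \eqref{eq:t_r-1}, yields $e(R)\ge k\cdot t_{r-1}(m)-\eta_0 m^2$ after absorbing lower-order terms.

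The key step is to show that $R$ is multicolored-$H_c$-free. Suppose otherwise: there is an injection $\phi:[r]\to[m]$ together with an injection $\psi$ from the multi-edges of $H_c$ to $[k]$ such that $\phi(i)\phi(j)\in E(R_\rho)$ for every multi-edge of $H_c$ between $i$ and $j$ assigned color $\rho$. Because $w_{H_c}(ij)=e_H(V_i,V_j)$, the injection $\psi$ extends to an injection $\psi':E(H)\to[k]$ by bijecting the $e_H(V_i,V_j)$ edges of $H$ between $V_i$ and $V_j$ to the $w_{H_c}(ij)$ colors assigned by $\psi$ at $(i,j)$. Choosing $|V_i|$ distinct vertices inside each $W_{\phi(i)}$ (possible since $|V_i|\le r\ll n/m\le|W_{\phi(i)}|$) then produces a multicolored copy of $H$ in $G^{\mathcal{P}}(\varepsilon,\gamma)$, because each edge $uv\in E(H)$ with $u\in V_i,\,v\in V_j$ lies in $G^{\mathcal{P}}_{\psi'(uv)}[W_{\phi(i)},W_{\phi(j)}]$, which is complete bipartite. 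Lemma~\ref{le:embedding} then lifts this to a multicolored $H$ inside $G$, contradicting the hypothesis. Therefore $R$ is multicolored-$H_c$-free, and since $H_c$ satisfies the same numerical hypotheses of Lemma~\ref{le:r_vertex-stability} (with the same $r$, $h$, $k$), that lemma gives $|R\triangle(k\cdot T_{r-1}(m))|\le\eta_1 m^2$.

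Finally, I would lift this reduced-level stability to $G$. Let $I_1,\ldots,I_{r-1}$ be the $(r-1)$-partition of $[m]$ realizing the minimum in $|R\triangle(k\cdot T_{r-1}(m))|$, so $|I_t|\in\{\lfloor m/(r-1)\rfloor,\lceil m/(r-1)\rceil\}$, and set $U_t=\bigcup_{i\in I_t}W_i$ for $t\in[r-1]$. For $M_0$ chosen large enough relative to $k$ and $\mu$, the balance of $|I_t|$ gives $\big||U_t|-\tfrac{n}{r-1}\big|=o(\mu n/k)$, so $|k\cdot K_{|U_1|,\ldots,|U_{r-1}|}\triangle k\cdot T_{r-1}(n)|\le\tfrac{\mu}{4}n^2$. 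To bound $|G\triangle k\cdot K_{|U_1|,\ldots,|U_{r-1}|}|$, split $\binom{V(G)}{2}$ into: (i) pairs inside a single cluster $W_i$; (ii) $\varepsilon$-irregular cluster pairs; (iii) $\varepsilon$-regular cluster pairs with $w_R(ij)=0$, on which $\varepsilon$-regularity and failure of $(\varepsilon,\gamma;\rho)$-lower-regularity in every color bound the total $G$-multiplicity by $k(\gamma+\varepsilon)$; (iv) pairs with $w_R(ij)>0$ inside the same $I_t$, controlled by the bound $\sum_{i,j\text{ in same }I_t}w_R(ij)\le\eta_1 m^2$ from the previous paragraph; and (v) the cross-class multiplicity deficit, bounded by subtracting $e(G)\ge k\cdot t_{r-1}(n)-\eta n^2$ from the upper bound on $e(G)$ produced by the split. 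These contribute $O(kn^2/m)$, $O(\varepsilon kn^2)$, $O((\gamma+\varepsilon)kn^2)$, $O(\eta_1 kn^2)$ and $O((\eta+\eta_1)n^2)$ respectively, summing to $\le\tfrac{\mu}{4}n^2$ by the parameter choice, and the triangle inequality then yields $|G\triangle(k\cdot T_{r-1}(n))|\le\mu n^2$.

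The main obstacle is the five-way bookkeeping in the last step: the reduced-level defect $\eta_1 m^2$, the $\varepsilon$-irregular pair loss, the $\gamma$-slack in lower regularity, the within-cluster edge mass, and the $\eta n^2$ slack in the global edge count must all be absorbed simultaneously into $\mu n^2$. This is what forces the hierarchy $\varepsilon\ll\gamma\ll\eta_1\ll\mu/k$; the $k$-nested-coloring hypothesis enters only through Lemma~\ref{le:embedding}.
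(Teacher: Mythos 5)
Your proof follows the same overall strategy as the paper: apply the Multicolor Regularity Lemma, argue that the reduced multigraph $R$ is multicolored-$H_c$-free (via Lemma~\ref{le:embedding}), invoke Lemma~\ref{le:r_vertex-stability} on $R$, and lift the reduced-level stability to $G$. The first three steps agree essentially verbatim with the paper, including the observation that $e(H_c)=h$ and that the nested hypothesis is consumed only by the Embedding Lemma; your more explicit construction of a multicolored $H$ in $G^{\mathcal P}$ from a multicolored $H_c$ in $R$ is just the unpacked version of the paper's one-line deduction.

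Where you diverge is in the final lift. The paper takes an easier route via the auxiliary multigraph $G^{\mathcal P}$: it bounds $|G^{\mathcal P}\triangle(k\cdot T_{r-1}(n))|$ by inflating the bound on $|R\triangle(k\cdot T_{r-1}(m))|$ to blown-up clusters, then separately bounds $|G\triangle G^{\mathcal P}|$ by exhibiting a small edge set $E$ with $E(G)\setminus E\subseteq E(G^{\mathcal P})$ and comparing edge counts, and finishes with the triangle inequality. You instead extract the near-optimal $(r-1)$-partition $\{I_t\}$ of $[m]$, inflate it to $\{U_t\}$, and compare $G$ directly to $k\cdot K_{|U_1|,\dots,|U_{r-1}|}$ using an excess/deficit computation. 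Both routes are valid; the paper's is a little cleaner because $G^{\mathcal P}$ packages all the regularity error in one object, whereas your version exposes that error in the lift and requires tracking five separate contributions. One presentational caveat: your ``split $\binom{V(G)}{2}$ into (i)--(v)'' is not actually a clean partition of vertex pairs — item~(v) is a quantity (the cross-class deficit), not a class of pairs, and item~(iii) mixes same-class and cross-class pairs whose contributions to the symmetric difference behave very differently. The intended argument, namely $|G\triangle k\cdot K_{\{U_t\}}| = 2\,e_G(\text{intra-class}) + k\sum_{t<t'}|U_t||U_{t'}| - e(G)$ with the intra-class mass bounded via (i)--(iv) and the second term controlled by Tur\'an maximality together with $e(G)\geq k\cdot t_{r-1}(n)-\eta n^2$, is correct and gives the stated bound; it is just worth stating the identity explicitly rather than presenting (v) as a sixth case. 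A small typo: in the embedding step you wrote $|V_i|\leq r$, which should be $|V_i|\leq|V(H)|\leq 2h$; this does not affect the argument since $|V_i|$ is still bounded by a constant much smaller than $|W_{\phi(i)}|$.
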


\begin{proof}
Let $0<\frac{1}{n}\ll \varepsilon \ll \gamma \ll \eta \ll \eta' \ll \mu, \frac{1}{k}<1$.
Let $H_c$ be the color-reduced multigraph of $H$.
Applying Lemma~\ref{le:regulartity} to $G$ with the constants $\varepsilon, k, \frac{1}{\varepsilon}$ playing the roles of $\varepsilon, k, M_0$, we obtain an $M$ and an $\varepsilon$-regular partition $\mathcal{P}=\{V_1, \ldots, V_m\}$ with $\frac{1}{\varepsilon}\leq m\leq M$, and we may assume that $n\gg M$.
Let $R$ be the $(\varepsilon, \gamma, \mathcal{P})$-reduced multigraph of $G$.
Let $\{G_1, \ldots, G_k\}$ be the colors of $G$.
Since $G$ is nested, we may assume that $G_1 \subseteq \cdots \subseteq G_k$ without loss of generality.
For each $1\leq \rho_1 <\rho_2 \leq k$ and $ij\in {[m]\choose 2}$, if $(V_i,V_j)$ is $(\varepsilon, \gamma; \rho_1)$-lower-regular, then $(V_i,V_j)$ is also $(\varepsilon, \gamma; \rho_2)$-lower-regular since $G_{\rho_1}\subseteq G_{\rho_2}$.
This implies that if $ij\in E(R_{\rho_1})$, then we also have $ij\in E(R_{\rho_2})$.
Hence, $R$ is nested, and therefore $G^{\mathcal{P}}= G^{\mathcal{P}}(\varepsilon, \gamma)$ is nested by the definition of $G^{\mathcal{P}}$.
Since $G$ is multicolored-$H$-free, the multigraph $G^{\mathcal{P}}$ is multicolored-$H$-free by Lemma~\ref{le:embedding}.
Then $R$ is multicolored-$H_c$-free, since otherwise $G^{\mathcal{P}}$ contains a multicolored copy of $H$.

By Lemma~\ref{le:regulartity}, we have $d(R)\geq d(G)-2(\varepsilon + \gamma)$.
Combining with $0<\frac{1}{n}\ll \varepsilon \ll \gamma \ll \eta$, we have
\begin{align*}
e(R) = &~d(R)m^2 \geq d(G)m^2-2(\varepsilon + \gamma)m^2 \\
\geq &~\left(k\cdot t_{r-1}(n)-\eta n^2\right)\frac{m^2}{n^2}-2(\varepsilon + \gamma)m^2 ~\geq k\cdot t_{r-1}(m)-2\eta m^2.
\end{align*}
Note that $|V(R)|=m\geq \frac{1}{\varepsilon}$ and $0<\varepsilon \ll \eta \ll \eta' \ll \frac{1}{k}<1$.
Then by Lemma~\ref{le:r_vertex-stability}, we have $|R\triangle (k\cdot T_{r-1}(m))|\leq \eta' m^2$.
Thus we can choose a copy $A$ of $k\cdot T_{r-1}(m)$ on vertex set $V(R)=[m]$ such that $|R\triangle A|\leq \eta' m^2$.
Let $U_1, \ldots, U_{r-1}$ be the partite sets of $A$.
For each $i\in [r-1]$, let $W_i=\bigcup_{j\in U_i}V_j$.
Let $B$ be the complete $(r-1)$-partite graph with partite sets $W_1, \ldots, W_{r-1}$ in which every edge has multiplicity $k$.
Since $|R\triangle A|\leq \eta' m^2$, we have $|G^{\mathcal{P}}\triangle B|\leq \eta' m^2\left\lceil\frac{n}{m}\right\rceil^2< 2\eta' n^2$.
Note that $\big\lfloor\frac{m}{r-1}\big\rfloor \leq |U_{i_1}|\leq \big\lceil\frac{m}{r-1}\big\rceil$ for each $i_1\in [r-1]$, and $\big\lfloor\frac{n}{m}\big\rfloor \leq |V_{i_2}|\leq \big\lceil\frac{n}{m}\big\rceil$ for each $i_2\in [m]$.
Thus for each $1\leq i<j\leq r-1$, we have $\left||W_i|-|W_j|\right|\leq \left\lceil\frac{n}{m}\right\rceil+\big\lfloor\frac{m}{r-1}\big\rfloor < 2\frac{n}{m}$.
Thus we can obtain a copy of $k\cdot T_{r-1}(n)$ by deleting or adding at most $k(r-1)(2\frac{n}{m}) n$ edge from $B$.
So $|B\triangle (k\cdot T_{r-1}(n))|\leq k(r-1)(2\frac{n}{m}) n<2kr\frac{1}{m}n^2\leq 2kr\varepsilon n^2 \leq \eta' n^2$.
Then $$|G^{\mathcal{P}}\triangle (k\cdot T_{r-1}(n))|\leq |G^{\mathcal{P}}\triangle B|+ |B\triangle (k\cdot T_{r-1}(n))|< 2\eta' n^2 + \eta' n^2 \leq 3\eta' n^2,$$
and thus $$e(G^{\mathcal{P}})\leq k\cdot t_{r-1}(n)+3\eta' n^2 \leq e(G)+ 4\eta' n^2.$$
From the definition of $G^{\mathcal{P}}$, we know that there exists an edge subset $E\subseteq E(G)$ with
$$|E|\leq k\left(\varepsilon m^2 \left(\frac{n}{m}\right)^2 + \frac{1}{m}\cdot \frac{n^2}{2} + \gamma n^2\right)=k\left(\varepsilon + \frac{1}{2m} + \gamma\right)n^2 \leq \eta' n^2$$
such that $E(G)\setminus E \subseteq E(G^{\mathcal{P}})$.
Then $$|G\triangle G^{\mathcal{P}}|\leq |E|+\left(e(G^{\mathcal{P}})-|E(G)\setminus E|\right)\leq 2|E|+4\eta' n^2\leq 6\eta' n^2.$$
Thus $|G\triangle (k\cdot T_{r-1}(n))|\leq |G\triangle G^{\mathcal{P}}|+|G^{\mathcal{P}}\triangle (k\cdot t_{r-1}(n))|\leq 9\eta' n^2\leq \mu n^2$.
\end{proof}

Now we have all the ingredients to state and prove our main result.
Note that Theorem~\ref{th:main} follows from Theorem~\ref{th:main+} below immediately since $2\frac{r-1}{r}(h-1)>\max\big\{\frac{r-1}{r-2}(h-1), 2\frac{r-1}{r}\big(h-\frac{r(r-2)}{4}+1\big)\big\}$ for $r\geq 5$.

\begin{theorem}\label{th:main+}
Let $r\geq 5$ and $H$ be an $r$-color-critical graph with $h$ edges.
If $n$ is sufficiently large and $$k> \max\left\{\frac{r-1}{r-2}(h-1), 2\frac{r-1}{r}\left(h-\frac{r(r-2)}{4}+1\right)\right\},$$ then $\ex_k(n, H)=k\cdot t_{r-1}(n)$, and the unique $n$-vertex $k$-color extremal multigraph of $H$ consists of $k$ colors all of which are identical copies of $T_{r-1}(n)$.
\end{theorem}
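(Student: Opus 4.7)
The plan is to combine Lemma~\ref{le:nested_degree} with the any-vertex stability statement Lemma~\ref{le:any_vertex-stability}, and then close the argument via the partiteness reduction of Lemma~\ref{le:partite}. First apply Lemma~\ref{le:nested_degree} to reduce the theorem to showing that every $n$-vertex simply $k$-nested-colored multicolored-$H$-free multigraph $G$ with $e(G) \geq k \cdot t_{r-1}(n)$ and $\delta(G) \geq k\,\delta(T_{r-1}(n))$ satisfies $G = k \cdot T_{r-1}(n)$. Fix constants $0 < \tfrac{1}{n} \ll \mu \ll \delta \ll \tfrac{1}{k}, \tfrac{1}{t} < 1$ with $t$ a large integer depending only on $|V(H)|$. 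Applying Lemma~\ref{le:any_vertex-stability} (with $\eta$ chosen much smaller than $\mu$) produces a balanced $(r-1)$-partition $V_1 \cup \cdots \cup V_{r-1}$ of $V(G)$ whose associated copy of $k \cdot T_{r-1}(n)$ differs from $G$ in at most $\mu n^2$ edges.

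Next I would extract vertex subsets $W_i \subseteq V_i$ with $|W_i| = t$ such that the induced multigraph $K := G[W_1 \cup \cdots \cup W_{r-1}]$ is $(r-1)$-partite with partite sets $W_1, \ldots, W_{r-1}$ and every cross-pair edge of $K$ has multiplicity exactly $k$. Such $W_i$ exist by a simple averaging argument: the bound $|G \triangle (k \cdot T_{r-1}(n))| \leq \mu n^2$ implies that a $(1 - o(1))$-fraction of vertices in each $V_i$ are incident only to full-multiplicity cross-pair edges and to no internal edge, so a greedy selection succeeds. The main subtask is then to argue that this $K$ is $H$-friendly. Given any additional vertex $v$ with $\sum_i d_{W_i}(v) \geq (r-2)tk$ and $d_{W_i}(v) \geq 1$ for all $i$, fix a critical coloring of $H$ with critical edge $e^* = u^* u^{**}$ between classes $U_{r-1}$ and $U_r$, and fix an $(r-1)$-coloring $c'$ of $H - e^*$, which forces $c'(u^*) = c'(u^{**})$. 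Map $u^{**}$ to $v$ and map the remaining vertices of $H$ into $W_1, \ldots, W_{r-1}$ according to $c'$. Every image of a non-$v$-incident edge of $H$ sits in $K$ with multiplicity $k \geq h$, and the $v$-incident edges have enough room because $v$ has total multiplicity at least $(r-2)tk$ distributed across the $W_i$'s. With a careful ordering of the edges of $H$ (heaviest last), Lemma~\ref{le:Hall} then yields distinct colors and hence a multicolored copy of $H$, completing the $H$-friendliness verification.

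Applying Lemma~\ref{le:partite} to $G$ via the $H$-friendly induced subgraph $K$ yields that $G$ is $(r-1)$-partite. Since each edge of $G$ has multiplicity at most $k$ and $e(G) \geq k \cdot t_{r-1}(n)$, we deduce $G = k \cdot T_{r-1}(n)$. The main obstacle is the verification of $H$-friendliness for arbitrary $r$-color-critical $H$ (the $r$-vertex case being handled by Step~I): one must engineer the embedding of the neighbors of $u^{**}$ into $W_1, \ldots, W_{r-1}$ so that the multiplicities of the edges from $v$ absorb $\{w_H(u^{**}u) : u \in N_H(u^{**})\}$, and then find an ordering of edges of $H$ satisfying the telescoping inequality of Lemma~\ref{le:Hall}; the hypothesis $k > \frac{r-1}{r-2}(h-1)$ should be exactly strong enough to make this feasible.
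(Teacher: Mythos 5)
Your proposal follows essentially the same approach as the paper's proof: reduce via Lemma~\ref{le:nested_degree} to nested colorings with high minimum degree, apply Lemma~\ref{le:any_vertex-stability} to obtain the approximate structure, extract a complete $(r-1)$-partite induced subgraph $K$ with high-multiplicity cross-edges, verify $K$ is $H$-friendly via Lemma~\ref{le:Hall} by anchoring a critical-edge endpoint at the new vertex $v$, and then apply Lemma~\ref{le:partite}. Two small points: the paper handles the ``induced'' requirement on $K$ not by selecting $W_i$'s free of internal edges, but by observing that any internal edge would already produce a multicolored $H$ (since $H - e^*$ is $(r-1)$-partite); and in the $H$-friendliness check the paper explicitly maps the \emph{lower-degree} endpoint of the critical edge to $v$ (so its degree is at most $h/2-1$), which is what makes the telescoping inequality close — you should make that choice explicit in your embedding.
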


\begin{proof}
Let $0<\frac{1}{n}\ll \mu \ll \frac{1}{h} < 1$.
We may assume that $H$ contains no isolated vertex, so $|V(H)|\leq 2h$.
Let $G$ be an $n$-vertex simply $k$-colored multicolored-$H$-free multigraph with $e(G)\geq k\cdot t_{r-1}(n)$.
We shall show that $G=k\cdot T_{r-1}(n)$.
By Lemma~\ref{le:nested_degree}, we may assume that $G$ is nested and $\delta(G)\geq k\delta(T_{r-1}(n))$.
By Lemma~\ref{le:any_vertex-stability}, we have $|G\triangle (k\cdot T_{r-1}(n))|\leq \mu n^2$.

Let $K^{(h)}$ be a complete $(r-1)$-partite multigraph with $2h$ vertices in each part whose edges all have multiplicity at least $h$.
Let $K'$ be the underlying graph of $K^{(h)}$, i.e., $K'$ is a complete $(r-1)$-partite graph with $2h$ vertices in each part.
Let $X\colonequals \big\{e\in{V(G)\choose 2}\colon\, w_G(e)=k\big\}$.
Since $|G\triangle (k\cdot T_{r-1}(n))|\leq \mu n^2$, we have $|X|\geq t_{r-1}(n)-\mu n^2 > t_{r-2}(n)+\frac{1}{2r^2}n^2 > \ex(n, K')$ by the Erd\H{o}s-Stone-Simonovits theorem.
Thus $G$ contains $K^{(h)}$ as a subgraph.
Let $K$ be a copy of $K^{(h)}$ in $G$ with partite sets $W_1, \ldots, W_{r-1}$.

\begin{claim}\label{cl:K} $K$ is $H$-friendly.
\end{claim}

\begin{proof} Let $K^{+}$ be the multigraph obtained by adding a new vertex $v$ to $K$ and adding edges incident to $v$ of multiplicity at most $k$ such that $d_{K^+}(v)\geq (r-2)2hk$ and $d_{W_i}(v)\geq 1$ for all $i\in [r-1]$.
Without loss of generality, we may assume that $d_{W_1}(v)=\min_{i\in [r-1]} d_{W_i}(v)$, and let $w\in W_1$ with $w_{K^+}(vw)\geq 1$.
We claim that $d_{W_i}(v)\geq kh$ for each $i\geq 2$.
Indeed, if $\min_{2\leq i\leq r-1} d_{W_i}(v)<kh$, then $d_{K^+}(v)<2kh+k(r-3)2h =(r-2)2hk$, a contradiction.

Let $ab$ be a critical edge of $H$ with $d_{H}(a)\leq d_H(b)$.
Since $\chi(H)=r$, we have $\chi(H-\{a,b\})\geq r-2$.
Thus $h\geq {r\choose 2}$ and $e(H-\{a,b\})\geq {r-2 \choose 2}$.
Then $d_H(a)+d_H(b)-1+{r-2 \choose 2}\leq h$, so $d_H(a)\leq \frac{h}{2}-1$.
Let $d\colonequals d_H(a)-1$.

For each $i\in \{2, \ldots, r-1\}$, let $w_{i,1}, \ldots, w_{i,2h}$ be the vertices of $W_i$ with $w_{K^+}(vw_{i,1})\geq \cdots \geq w_{K^+}(vw_{i,2h})$.
We claim that for each $i\in \{2, \ldots, r-1\}$ and $j\in [d]$, we have $w_{K^+}(vw_{i,j})\geq d-j+2$.
To see this, note that $$w_{K^+}(vw_{i,j})\geq \frac{1}{2h-j+1}\left(d_{W_i}(v)-\sum\nolimits_{\ell =1}^{j-1}w_{K^+}(vw_{i,\ell})\right)\geq \frac{1}{2h-j+1}(kh-k(j-1))$$ and
\begin{align*}
~&~kh-k(j-1)-(2h-j+1)(d-j+2) \\
\geq &~kh-k(j-1)-(2h-j+1)\left(\frac{h}{2}-2-j+2\right) \\
= &~-j^2-\left(k-\frac{5}{2}h-1\right)j+ k(h+1)-h^2-\frac{h}{2}.
\end{align*}
Let $f(x)\colonequals -x^2-(k-\frac{5}{2}h-1)x+ k(h+1)-h^2-\frac{h}{2}$.
Then $\min\{f(x)\colon\, 1\leq x\leq d\}\geq \min\{f(x)\colon\, 1\leq x\leq \frac{h}{2}\}\geq \min\{f(1), f(\frac{h}{2})\}\geq 0$.
Thus $w_{K^+}(vw_{i,j})\geq d-j+2$ for each $i\in \{2, \ldots, r-1\}$ and $j\in [d]$.

Consider a critical coloring of $H$ with color classes $V_1, \ldots, V_r$, where $a\in V_r$, $b\in V_1$ and $e(V_1, V_r)=1$.
Since $e(V_1, V_r)=1$, we may further assume that $V_r=\{a\}$ by putting the vertices of $V_r\setminus\{a\}$ into $V_1$ if necessary.
For each $2\leq i\leq r-1$, let $V_i'=N_H(a)\cap V_i$.
Take an embedding $\phi: V(H) \to V(K^+)$ and an ordering $(e_1, \ldots, e_h)$ of the edges of $H$ with $e_1=ab$ and $\{e_2, \ldots, e_{d+1}\}=\{au\colon\, u\in N_H(a)\setminus \{b\}\}$ such that
\begin{itemize}
\item[(i)] $\phi(a)=v$, $\phi(b)=w$, $\phi(V_1)\subseteq W_1$,
\item[(ii)] $\phi(V_i')=\{w_{i,1}, \ldots, w_{i,|V_i'|}\}$ and $\phi(V_i)\subseteq W_i$ for each $2\leq i\leq r-1$, and
\item[(iii)] $w_{K^+}(\phi(e_2))\leq \cdots \leq w_{K^+}(\phi(e_{d+1}))$.
\end{itemize}
Then for each $2\leq j\leq d+1$ and $2\leq i\leq r-1$,
since $w_{K^+}(\phi(e_j))=\max_{d+2-j}\{w_{K^+}(\phi(e_2)), \ldots,$  $w_{K^+}(\phi(e_{d+1}))\}$ and $w_{K^+}(vw_{i,d+2-j})=\max_{d+2-j}\{w_{K^+}(vw_{i,1}), \ldots, w_{K^+}(vw_{i,2h})\}$,
we have $w_{K^+}(\phi(e_j))\geq w_{K^+}(vw_{i,d+2-j})\geq j=\sum_{i=1}^{j}w_H(e_i)$.
Thus $w_{K^+}(\phi(e_j))\geq \sum_{i=1}^{j}w_H(e_i)$ for each $1\leq j\leq h$.
By Lemma~\ref{le:Hall}, $K^+$ contains a multicolored copy of $H$.
Thus $K$ is $H$-friendly.
\end{proof}

By Claim~\ref{cl:K}, $K$ is $H$-friendly.
Moreover, $K$ is an induced subgraph of $G$, since any additional edge within a color class of $K$ yields a multicolored copy of $H$.
By Lemma~\ref{le:partite}, $G$ is $(r-1)$-partite.
Since $T_{r-1}(n)$ is the unique $(r-1)$-partite graph on $n$ vertices with $t_{r-1}(n)$ edges, we have $e(G)\leq k\cdot t_{r-1}(n)$.
Thus $e(G)= k\cdot t_{r-1}(n)$, and $G=k\cdot T_{r-1}(n)$.
This completes the proof of Theorem~\ref{th:main+}.
\end{proof}

\section{Concluding remarks}
\label{sec:ch-conclu}

In this paper, we prove that Conjecture~\ref{conj:KSSV-critical} holds for $k\geq 2\frac{r-1}{r}(h-1)$.
We now give a remark regarding the challenges in employing our current arguments to fully resolve Conjecture~\ref{conj:KSSV-critical}.
The main challenge in settling the full upper range $k\geq \frac{r-1}{r-2}(h-1)$ lies in improving the lower bound on $k$ in Lemma~\ref{le:r_vertex-degree}.
Essentially, the proof of this key lemma is completed in two steps:
\begin{itemize}
\item[(1)] we first find a multigraph $F$ on $r-1$ vertices in $G$ via Tur\'{a}n's theorem and Lemma~\ref{le:vertices}, and then obtain a multigraph $F^+$ by adding a new vertex and several new edges to $F$;
\item[(2)] we show that $F^+$ contains a multicolored copy of $H$ utilizing certain packing arguments, which implies that $F$ is $H$-friendly and therefore $G$ is $(r-1)$-partite by Lemma~\ref{le:partite}.
\end{itemize}
In the first step, the main task is to identify a complete multigraph on $r-s$ vertices with edges of high multiplicities, where $s= \left\lceil\frac{r}{2}\right\rceil$.
If our focus is solely on this particular task, then as the value of $s$ increases, the required value of $k$ decreases accordingly.
In the second step, the main task is to show that there is a packing of two graphs.
In this step, if the value of $s$ increases, then the required multiplicity on the edges in $F^+$ must increase, and the required value of $k$ increases accordingly.
Therefore, there is a trade-off between the two steps.
After conducting a thorough trade-off analysis, we have selected $s= \left\lceil\frac{r}{2}\right\rceil$ and $k\geq 2\frac{r-1}{r}(h-1)$ as the optimal parameter values.
Although in the small case $r=5$, we can improve the bound on $k$ from $\frac{8}{5}(h-1)$ to $\frac{3}{2}(h-1)$ using some complicated analysis (which we decide to not include here),
the packing method presented here cannot be used to significantly improve the lower bound on $k$ for general $r$.
Hence, some novel ideas for embedding are called for.

We next provide some remarks on our new arguments compared to the previous papers \cite{CKLLS,KSSV} on the same topic.
Note that the graphs considered in \cite{CKLLS,KSSV} possess some nice properties.
Thus by some clever arguments, they can find a multicolored $H$ in a multigraph with one edge of high multiplicity.
However, when dealing with general color-critical (multi)graphs, the presence of merely a single edge with high multiplicity is insufficient for us to identify a multicolored $H$.
We must find a complete multigraph on $r-s$ vertices with edges of high multiplicities, where $s= \left\lceil\frac{r}{2}\right\rceil$ by conducting a trade-off analysis as mentioned above.
And finally, we use a novel graph packing technique to find the required multicolored copy of $H$.

Let $H$ be a non-color-critical graph with $h$ edges and $\chi(H)\geq 3$.
Chakraborti, Kim, Lee, Liu and Seo (see \cite[Proposition~1.6]{CKLLS}) proved that $\ex_k(n, H)> \max\big\{(h-1){n\choose 2}, k\cdot \ex(n, H)\big\}$ when $k\geq \frac{r-1}{r-2}(h-1)$ and $n$ is sufficiently large.
Thus Conjecture~\ref{conj:KSSV-critical} does not hold for general $h$-edge graphs with chromatic number $r$ when $k\geq \frac{r-1}{r-2}(h-1)$.
However, we can prove the following result for any general graph (not necessarily color-critical) with chromatic number at least three,
showing that this conjecture stays true in the range $h\leq k \leq h+\big\lfloor\frac{r}{2}\big\rfloor-1$.

\begin{theorem}\label{th:ksmall}
For any graph $H$ with $h$ edges and $\chi(H)=r\geq 3$, if $n$ is sufficiently large and $$h\leq k \leq h+\left\lfloor\frac{r}{2}\right\rfloor-1,$$ then $\ex_k(n, H)=(h-1){n\choose 2}$ and the unique $n$-vertex $k$-color extremal multigraph of $H$ consists of exactly $h-1$ colors each of which is a copy of $K_n$.
\end{theorem}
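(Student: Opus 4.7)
The lower bound $\ex_k(n, H) \geq (h-1)\binom{n}{2}$ is immediate from taking $h-1$ identical copies of $K_n$ in $h-1$ distinct colors (with the remaining $k-(h-1)$ colors empty): this multigraph uses only $h-1$ distinct colors, so any multicolored copy of $H$ (which would require $h$ distinct colors) cannot exist. For the upper bound and the uniqueness, my plan is to show that any $n$-vertex simply $k$-colored multigraph $G$ with $e(G) \geq (h-1)\binom{n}{2}$ and no multicolored copy of $H$ is forced to equal $(h-1)\cdot K_n$. The first step is to invoke the nesting reduction from \cite{CKLLS,KSSV} already used elsewhere in the paper, so we may assume $G_1 \subseteq \cdots \subseteq G_k$; in particular an edge $e$ has multiplicity $w_G(e) \geq j$ iff $e \in G_{k-j+1}$. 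Setting $t := k - h + 1 \in \{1,\ldots,\lfloor r/2 \rfloor\}$ and $\bar e_i := \binom{n}{2} - e(G_i)$, the hypothesis becomes $\sum_i \bar e_i \leq t\binom{n}{2}$, and the monotonicity $\bar e_1 \geq \cdots \geq \bar e_k$ yields $\bar e_j \leq \frac{t}{j}\binom{n}{2}$ for every $j$.

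The key dichotomy is whether $G_t$ is empty. If $G_t = \emptyset$, then only the top $h-1$ colors $G_{t+1},\ldots,G_k$ contribute edges, each bounded by $\binom{n}{2}$, forcing $e(G) \leq (h-1)\binom{n}{2}$ with equality iff each equals $K_n$; this means $G = (h-1)\cdot K_n$, as desired. So the remaining task is to derive a contradiction under the assumption $G_t \neq \emptyset$. Fix any $e_0 \in G_t$; by nesting, $w_G(e_0) \geq k - t + 1 = h$. By Lemma~\ref{le:Hall}, it suffices to exhibit a copy $\phi$ of $H$ containing $e_0$ whose edge multiplicities, sorted increasingly as $w_1 \leq \cdots \leq w_h$, satisfy $w_i \geq i$ for every $i \in [h]$. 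A particularly clean sufficient condition is $\phi \subseteq G_{t+1}$: every edge of $G_{t+1}$ has multiplicity $\geq h-1$ and $e_0$ has multiplicity $\geq h$, so the sorted profile $(h-1,\ldots,h-1,h)$ trivially meets $w_i \geq i$.

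The main obstacle is producing such $\phi$: the direct density bound $e(G_{t+1}) \geq \binom{n}{2}/(t+1)$ need not exceed $\ex(n,H) = (1 - \tfrac{1}{r-1} + o(1))\binom{n}{2}$, so $G_{t+1}$ alone may not contain $H$ through $e_0$. To overcome this, my plan is to exploit the denser color $G_k$: the inequalities $h \geq \binom{r}{2}$ and $t \leq \lfloor r/2 \rfloor$ give $t/k < \frac{1}{r-1}$ strictly, hence $e(G_k) > \frac{r-2}{r-1}\binom{n}{2}$, and by the Erdős--Stone--Simonovits theorem with supersaturation, $G_k$ contains $\Theta(n^{v(H)-2})$ copies of $H$ through $e_0$. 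I would then argue that a positive fraction of these copies lies entirely in $G_{t+1}$ (hence satisfies Hall) via a greedy vertex-by-vertex embedding: fixing an edge of $H$ to map onto $e_0$, the remaining vertices of $H$ are inserted one at a time, at each step choosing a vertex that avoids a small set of ``bad'' vertices (those whose newly created incident edges would fall outside the required $G_s$); the density bounds $\bar e_s \leq \frac{t}{s}\binom{n}{2}$ control these bad sets.

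The principal technical difficulty will be making the greedy embedding succeed, since the expected number of non-$e_0$ edges of a random copy through $e_0$ that lie outside $G_{t+1}$ is a positive constant (not $o(1)$), so a naive union bound fails. I anticipate resolving this by carefully choosing $e_0 \in G_t$ to have endpoints of high $G_{t+1}$-degree (possible because the average degree in $G_{t+1}$ is at least $n/(t+1)$, and the small total defect $\sum_v (n-1-d_{G_{t+1}}(v)) = 2\bar e_{t+1}$ guarantees that most vertices are ``typical''), and by sequencing the vertex insertions so that the edges requiring the scarcest landing conditions are handled first. Once such $\phi$ is constructed, Lemma~\ref{le:Hall} yields a multicolored $H$ in $G$, contradicting the multicolored-$H$-freeness of $G$ and ruling out $G_t \neq \emptyset$. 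Hence $G = (h-1) \cdot K_n$, establishing both $\ex_k(n,H) = (h-1)\binom{n}{2}$ and the uniqueness of the extremal multigraph.
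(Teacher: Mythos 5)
Your lower-bound construction and the reduction to nested colorings are fine, and your reformulation of Lemma~\ref{le:Hall} in terms of sorted multiplicities ($w_1\leq\cdots\leq w_h$ with $w_i\geq i$) is exactly right.  But the core of your upper-bound argument has a genuine gap.  You propose, as a ``clean sufficient condition,'' to embed $H$ entirely inside $G_{t+1}$ (so that every non-$e_0$ edge has multiplicity $\geq h-1$), and then to certify this via supersaturation in $G_k$ followed by a greedy correction.  This cannot work in general, because $G_{t+1}$ can be a subgraph of the Tur\'an graph $T_{r-1}(n)$ (indeed $G_{t+1}=T_{r-1}(n)$ itself), which is $(r-1)$-partite and hence contains \emph{no} copy of $H$ whatsoever since $\chi(H)=r$.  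In such a configuration the set of $H$-copies through $e_0$ that lie in $G_{t+1}$ is empty, so no choice of $e_0$ and no vertex-insertion sequencing can rescue the greedy embedding; the claim that ``a positive fraction of these copies lies entirely in $G_{t+1}$'' is simply false.  (And note this scenario is not vacuous: with $G_1\subseteq\cdots\subseteq G_t$ properly contained in $T_{r-1}(n)$, $G_{t+1}=T_{r-1}(n)$, and $G_{t+2}=\cdots=G_k=K_n$, one can have $e(G)\geq (h-1)\binom{n}{2}$, $G_t\neq\emptyset$, and a multicolored $H$ \emph{does} exist --- but it uses edges of multiplicity strictly less than $h-1$, outside $G_{t+1}$.)

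The crucial idea your proposal backs away from is the one you actually wrote down: the graded condition $w_i\geq i$ permits multiplicities as small as $1,2,\ldots$ on the cheapest edges.  The paper exploits exactly this slack in Lemma~\ref{le:ksmall_r_vertex-degree}: after picking $e_0=v_1v_2$ of multiplicity $\geq h$, it iterates Lemma~\ref{le:vertices} (using the \emph{minimum-degree} hypothesis, which your proposal never invokes) to find $v_3,\ldots,v_r$ with $\sum_{j<i}w_G(v_iv_j)\geq (h-1)(i-1)$, and then orders the $\binom{r}{2}$ pairs so that the $j$-th edge in the order has multiplicity $\geq h-(j-1)$; the tail edges are allowed to have small multiplicity.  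The paper also routes through the $r$-vertex multigraph case first and then lifts to general $H$ with $\chi(H)=r$ via the regularity-plus-stability machinery (Lemmas~\ref{le:ksmall_r_vertex-stability} and \ref{le:ksmall_any_vertex-stability}), rather than attempting a one-shot embedding in $G$ as you do.  If you want to pursue your direct route, the fix is to abandon the ``all edges in $G_{t+1}$'' target and instead produce an embedding realizing the full graded profile $(\geq 1,\geq 2,\ldots,\geq h)$, which requires a structural vertex-selection step like the one driven by the degree condition.
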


We supply a proof sketch in Appendix~\ref{ap:2}.
It would be interesting to determine the maximum $k$ such that $\ex_k(n, H)=(h-1){n\choose 2}$ for every graph $H$ with $h$ edges and $\chi(H)\geq 3$.

Besides the sum condition $\sum_{i\in [k]}e(G_i)$, the product condition $\prod_{i\in [k]}e(G_i)$ for multicolored triangles was also studied in \cite{FaMR24+Combin,Fra22+,HFGLSTVZ}.
Further research on the product condition for general cliques or color-critical graphs could be fruitful.
On the other hand, it is also natural to study multicolored spanning structures instead of a fixed small subgraph, such as Hamilton paths and cycles \cite{ADGSMS,JoKi}, perfect matchings \cite{CHWW,JoKi,LuWY23JCTB}, and graph factors \cite{ABHP,CHWW}.
\vspace{-0.2cm}

\section*{Acknowledgement}

The authors are grateful to the anonymous referees for valuable comments and suggestions which improved the presentation of this paper.
Jie Ma is supported by National Key Research and Development Program of China (Grant No. 2023YFA1010201) and National Natural Science Foundation of China (Grant No. 12125106).
Xihe Li is supported by the National Natural Science Foundation of China (Grant No. 12501492), Shaanxi Province Postdoctoral Science Foundation (Grant No. 2024BSHSDZZ155) and the Fundamental Research Funds for the Central Universities (Grant No. GK202506024).
We would like to thank Dilong Yang for helpful discussions.

\begin{spacing}{0.8}
\addcontentsline{toc}{section}{References}

\end{spacing}

\vspace{-0.5cm}

\appendix

\section*{Appendix}

\section{Proof of Lemmas~\ref{le:calculation-1} and \ref{le:calculation-2}}\label{ap:1}

For convenience, we will restate Lemmas~\ref{le:calculation-1} and \ref{le:calculation-2}.

\medskip\noindent{\bf Lemma~\ref{le:calculation-1}.} {\it Let $r\geq 5$, $s=\left\lceil\frac{r}{2}\right\rceil$ and $h\geq {r\choose 2}$.
If $\ell$ and $i$ are two integers satisfying one of the following statements:
\begin{itemize}
\item[{\rm (i)}] $2\leq \ell \leq s-2$ and $r-s+1 \leq i\leq r-1$, or
\item[{\rm (ii)}] $\ell = s-1$ and $r-s+2 \leq i\leq r-1$,
\end{itemize}
then $\frac{2}{r}\big(r-2-\frac{\ell-1}{i-\ell}\big)\big(h-\frac{r(r-2)}{4}+1\big)\geq h-((\ell-1)s+r-i).$}
\vspace{0.05cm}

\begin{proof}
Let $f(r,i,\ell)\colonequals \frac{2}{r}\big(r-2-\frac{\ell-1}{i-\ell}\big)\big(h-\frac{r(r-2)}{4}+1\big) - h+(\ell-1)s+r-i.$
We shall show that $f(r,i,\ell)\geq 0$.
Note that in both cases we have $i-\ell\geq r-2s+3$.
Since $h\geq {r\choose 2}$ and
\begin{align*}
 \frac{2}{r}\left(r-2-\frac{\ell -1}{i-\ell}\right)-1 = &~1-\frac{4}{r}-\frac{2}{r}\cdot \frac{\ell -1}{i-\ell} ~\geq 1-\frac{4}{r}-\frac{2}{r}\cdot\frac{s-2}{r-2s+3} \\
 \geq &~1-\frac{4}{r}-\frac{2}{r}\cdot \frac{(r+1)/2-2}{r-(r+1)+3} ~= \frac{r-5}{2r} ~\geq 0,
\end{align*}
we have
\begin{align*}
 f(r,i,\ell) \geq &~\frac{2}{r}\left(r-2-\frac{\ell -1}{i-\ell }\right)\left({r \choose 2}-\frac{r(r-2)}{4}+1\right)-{r \choose 2}+(\ell-1)s+r-i \\
 = &~-\frac{r}{2}\left(1+\frac{\ell-1}{i-\ell}\right)+(\ell-1)s+r-i+\frac{2}{r}\left(r-2-\frac{\ell -1}{i-\ell }\right) \\
 \geq &~-\frac{r}{2}\left(1+\frac{\ell-1}{i-\ell}\right)+(\ell-1)s+r-i+1.
\end{align*}
If $\ell \geq 3$, then
\begin{align*}
 f(r,i,\ell) \geq &~-\frac{r}{2}\left(1+\frac{\ell-1}{i-\ell}\right)+(\ell-1)s+r-i+1 \\
 \geq &~-\frac{r}{2}\left(1+\frac{\ell-1}{r-2s+3}\right)+(\ell-1)\frac{r}{2}+r-(r-1)+1 \\
 \geq &~-\frac{r}{2}\left(2+\frac{\ell-1}{r-(r+1)+3}-\ell\right)+2 ~= \frac{r}{2}\cdot \frac{\ell-3}{2}+2 ~> 0.
\end{align*}
If $\ell =2$, then
\begin{align*}
 f(r,i,\ell) \geq &~-\frac{r}{2}\left(1+\frac{\ell-1}{i-\ell}\right)+(\ell-1)s+r-i+1 \\
 \geq &~-\frac{r}{2}\left(1+\frac{1}{i-2}\right)+\frac{r}{2}+r-i+1 \\
 = &~\frac{1}{2(i-2)}\left(-2i^2 + (2r+6)i -5r -4 \right) \\
 \geq &~\frac{1}{2(i-2)}\min\bigg\{-2\bigg(\frac{r+1}{2}\bigg)^2 + (2r+6)\frac{r+1}{2} -5r -4, \\
 &~\hspace{2.45cm}  -2(r-1)^2 + (2r+6)(r-1) -5r -4\bigg\} \\
 = &~\frac{1}{4(i-2)}\min\left\{r^2-4r-3, 2(3r-12)\right\} ~> 0,
\end{align*}
where the third inequality holds since $i\geq r-s+1\geq \frac{r+1}{2}$ and $i\leq r-1$.
This completes the proof.
\end{proof}

\medskip\noindent{\bf Lemma~\ref{le:calculation-2}.} {\it Let $r\geq 5$, $s=\left\lceil\frac{r}{2}\right\rceil$, $2\leq \ell \leq s-1$ and $h\geq {r\choose 2}$. Then $$2\frac{r-\ell-1}{r-\ell}\cdot \frac{r-1}{r}\left(h-\frac{r(r-2)}{4}+1\right)\geq h-(\ell-1)s.$$}
\vspace{-0.4cm}

\begin{proof}
Since $\ell \leq s-1\leq (r-1)/2$, we have
$$2\frac{r-\ell-1}{r-\ell}\cdot \frac{r-1}{r}-1\geq 2\frac{r-1-(r-1)/2}{r-(r-1)/2}\cdot \frac{r-1}{r}-1= \frac{r^2-5r+2}{r(r+1)}>0.$$
Thus
\begin{align*}
 &~2\frac{r-\ell-1}{r-\ell}\cdot\frac{r-1}{r}\left(h-\frac{r(r-2)}{4}+1\right)-h+(\ell -1)s \\
 \geq &~\frac{2(r-\ell -1)(r-1)}{r(r-\ell )}\left({r \choose 2}-\frac{r(r-2)}{4}+1\right)-{r \choose 2}+(\ell -1)s \\
 \geq &~\frac{2(r-\ell-1)(r-1)}{r(r-\ell)}\cdot \frac{r^2+4}{4}-{r\choose 2}+ (\ell -1)\frac{r}{2} \\
 \geq &~\frac{1}{2r(r-\ell)}\left(-r^2\ell ^2+(r^3+r^2-4r+4)\ell -2r^3+5r^2-8r+4\right) \\
 \geq &~\frac{1}{2r(r-\ell)}\min\bigg\{-r^2\cdot 2^2+(r^3+r^2-4r+4)\cdot 2 -2r^3+5r^2-8r+4, \\
 &~\hspace{2.45cm}  -r^2\cdot \bigg(\frac{r-1}{2}\bigg)^2+(r^3+r^2-4r+4)\cdot \bigg(\frac{r-1}{2}\bigg) -2r^3+5r^2-8r+4\bigg\} \\
 \geq &~\frac{1}{8r(r-\ell)}\min\left\{4(3r^2-16r+12), r^4-6r^3+9r^2-16r+8\right\}.
\end{align*}
Since $r\geq 5$, we have $3r^2-16r+12>0$.
If $r\geq 6$, then $r^4-6r^3\geq 0$ and $9r^2-16r\geq 0$, so $r^4-6r^3+9r^2-16r+8> 0$.
If $r=5$, then $r^4-6r^3+9r^2-16r+8=5(125-150+45-16)+8>0$.
This completes the proof.
\end{proof}

\section{Proof of Theorem~\ref{th:ksmall}}\label{ap:2}

The proof of Theorem~\ref{th:ksmall} follows from the following four lemmas.
Lemma~\ref{le:ksmall_r_vertex} can be proved using Lemma~\ref{le:ksmall_r_vertex-degree} and a deleting argument.
The proofs of Lemmas~\ref{le:ksmall_r_vertex-stability}, \ref{le:ksmall_any_vertex-stability} and the final proof of Theorem~\ref{th:ksmall} can be completed by a routine application of the stability argument introduced in \cite{CKLLS}, so we omit the details and we only supply a proof of Lemma~\ref{le:ksmall_r_vertex-degree}.

\begin{lemma}\label{le:ksmall_r_vertex-degree}
Let $r\geq 3$, $h\leq k\leq h+\big\lfloor\frac{r}{2}\big\rfloor-1$ and $0 \ll \frac{1}{n} \ll \delta \ll \frac{1}{k}<1$.
Let $H$ be an $r$-vertex $h$-edge multigraph with $w_H(e)\geq 1$ for each $e\in {V(H)\choose 2}$, and $G$ be an $n$-vertex simply $k$-colored multicolored-$H$-free multigraph with $\delta(G)\geq (1-\delta)(h-1)(n-1)$.
Then $w_G(e)\leq h-1$ for each $e\in {V(G)\choose 2}$.
\end{lemma}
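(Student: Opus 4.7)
The plan is a proof by contradiction. Suppose some pair $v_1v_2\in\binom{V(G)}{2}$ has $w_G(v_1v_2)\geq h$; I will produce a multicolored copy of $H$ in $G$, contradicting the hypothesis. Starting from $\{v_1,v_2\}$ and iteratively applying Lemma~\ref{le:vertices} with $d=h-1$, I locate $v_3,\ldots,v_r\in V(G)$ satisfying $\sum_{j<i}w_G(v_iv_j)\geq(h-1)(i-1)$ for each $3\leq i\leq r$. Let $F:=G[\{v_1,\ldots,v_r\}]$ and $m:=\binom{r}{2}$. Summing the degree bounds yields $e(F)\geq h+\sum_{i=3}^{r}(h-1)(i-1)=(h-1)m+1$, and a short arithmetic check using $k\leq h+\lfloor r/2\rfloor-1$ together with $h\geq\binom{r}{2}$ (the latter forced by $w_H(e)\geq 1$ on every pair of $V(H)$) rules out zero multiplicities inside $F$: otherwise the $i-2$ remaining edges at some $v_i$ could not sum to $(h-1)(i-1)$ under the cap $k$, since $(h-1)/(i-2)>\lfloor r/2\rfloor\geq k-(h-1)$ for every $3\leq i\leq r$.

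Next I apply Lemma~\ref{le:Hall} to embed $H$ multicolored into $F$. Pick a maximum-weight edge $u^\ast u^{\ast\ast}$ of $H$ with $h_{\max}:=w_H(u^\ast u^{\ast\ast})$, and set $\phi(u^\ast)=v_1$, $\phi(u^{\ast\ast})=v_2$; extend $\phi$ to a bijection $\{u_3,\ldots,u_r\}\to\{v_3,\ldots,v_r\}$ to be chosen below. Place $f_m:=u^\ast u^{\ast\ast}$ last in the enumeration of $E(H)$, and enumerate $f_1,\ldots,f_{m-1}$ in increasing order of $w_F(\phi(\cdot))$ (ties broken by increasing $w_H$). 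Since $w_F(v_1v_2)\geq h=\sum_\ell w_H(f_\ell)$, the final Hall inequality is automatic, and the remaining Hall constraints read
\[
g'_i \;\geq\; S_i \;:=\; \sum_{\ell\leq i} w_H(f_\ell), \qquad 1\leq i\leq m-1,
\]
where $g'_1\leq\cdots\leq g'_{m-1}$ are the sorted multiplicities of the non-$v_1v_2$ pairs of $F$ and the extension of $\phi$ is chosen to realize a sorted pairing (smallest $w_H$ matched to smallest $w_F$).

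The main obstacle is verifying $g'_i\geq S_i$. From $\sum_\ell g'_\ell\geq(h-1)m+1-k$ together with $g'_\ell\leq k$, a standard sum-to-max argument gives
\[
g'_i\;\geq\;\left\lceil\frac{(h-1)m+1-(m-i)k}{i}\right\rceil,
\]
while sorting the non-maximal $w_H$-values and using integrality yields $S_i\leq\lfloor i(h-h_{\max})/(m-1)\rfloor$. Inserting $k-(h-1)\leq\lfloor r/2\rfloor$ into both sides and performing a short case analysis on $(r,i)$ closes the inequality $g'_i\geq S_i$; the integer ceiling exactly absorbs the fractional slack of order $\lfloor r/2\rfloor/i$, which is why the threshold $k\leq h+\lfloor r/2\rfloor-1$ appears in the hypothesis. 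A secondary subtlety, for $r\geq 4$, is that not every edge-bijection of $K_r$ arises from a vertex bijection, so realizing the sorted pairing may require exploiting the $(r-2)!$ freedom in extending $\phi$, together with swap arguments or a fall-back to a non-sorted but still prefix-Hall-feasible matching. Once $g'_i\geq S_i$ is confirmed, Lemma~\ref{le:Hall} produces a multicolored copy of $H$ in $F\subseteq G$, the desired contradiction.
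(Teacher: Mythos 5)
Your overall plan matches the paper's: argue by contradiction, iterate Lemma~\ref{le:vertices} to extend $\{v_1,v_2\}$ to $r$ vertices $v_1,\dots,v_r$ with $\sum_{j<i}w_G(v_iv_j)\geq (h-1)(i-1)$, and then invoke Lemma~\ref{le:Hall} on $F=G[\{v_1,\dots,v_r\}]$. However, the heart of the argument diverges, and I believe there is a genuine gap in your version.

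The paper keeps the per-vertex information: for each $i\in\{3,\dots,r\}$ and each $\ell$, it bounds the $\ell$-th largest of the back-multiplicities $\{w_G(v_iv_j):j<i\}$ by
\[
 w_G(e_{i,\ell})\;\geq\;h-\tfrac{1}{i-\ell}\bigl(i-1+(\lfloor r/2\rfloor-1)(\ell-1)\bigr),
\]
and then arranges all $\binom{r}{2}$ edges in a specific explicit order $e_1,\dots,e_{\binom{r}{2}}$ (first $v_1v_2$, then $e_{r,1},\dots,e_{3,1}$, then $e_{r,2},\dots,e_{3,2}$, etc.) so that $w_G(e_j)\geq h-(j-1)$. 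After reversing, $w_G(e_j')\geq h-\bigl(\binom{r}{2}-j\bigr)$. Since $w_H(e)\geq 1$ on every pair of $V(H)$, \emph{any} $j$ edges of $H$ have total weight at most $h-\bigl(\binom{r}{2}-j\bigr)$, and therefore \emph{any} vertex bijection $\phi:V(H)\to V(F)$ yields a Hall-feasible enumeration. The realizability of a specific edge-pairing never arises.

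Your version instead discards the per-vertex structure and works only with $e(F)\geq(h-1)\binom{r}{2}+1$ plus the cap $w\leq k$, deducing
\[
g'_i\;\geq\;\Bigl\lceil\tfrac{(h-1)m+1-(m-i)k}{i}\Bigr\rceil,\qquad m=\tbinom{r}{2}.
\]
This is strictly weaker and does \emph{not} give the needed bound $g''_j\geq h-(m-j)$ for small $j$ once $h$ exceeds $\binom{r}{2}$. Concretely, take $r=4$, $h=10$, $k=h+1=11$ (so $s=\lfloor r/2\rfloor=2$ and $k$ is in range). The paper's per-vertex estimate gives $w_G(e_{4,3})\geq h-(3+1\cdot 2)=5$, so every edge of $F$ has multiplicity at least $h-\binom{r}{2}+1=5$ as required. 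Your sum bound gives only $g''_1\geq(h-1)m+1-(m-1)k=55-55=0$, improved to $1$ by your no-zero argument but still far short of $5$. To bridge this, you fall back on the sharper $S_i\leq\lfloor i(h-h_{\max})/(m-1)\rfloor$ that is valid only for a \emph{sorted} pairing — and you correctly note that not every edge-bijection of $K_r$ is induced by a vertex bijection, so the sorted pairing may not be realizable. The proposed remedies ("swap arguments," "fall-back to a non-sorted but prefix-Hall-feasible matching") are exactly the difficulty, and they are not carried out. Without them the argument does not close; and with the paper's per-vertex bounds they are unnecessary, since the universal estimate $w_H(E)\leq h-\bigl(\binom{r}{2}-|E|\bigr)$ makes the vertex-bijection question moot.

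In short: the no-zero observation, the use of Lemma~\ref{le:vertices}, and the invocation of Lemma~\ref{le:Hall} are all right, but replacing the per-vertex sorted-back-edge bounds by a global averaging bound loses essential information, and the sorted-pairing realizability issue this creates is not resolved.
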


\begin{lemma}\label{le:ksmall_r_vertex}
Let $r\geq 3$, $h\leq k\leq h+\big\lfloor\frac{r}{2}\big\rfloor-1$, $n$ be sufficiently large, and $H$ be an $r$-vertex $h$-edge multigraph with $w_H(e)\geq 1$ for each $e\in {V(H)\choose 2}$.
Then $\ex_k(n, H)=(h-1){n\choose 2}$, and the $n$-vertex $k$-color extremal multigraph of $H$ consists of exactly $h-1$ colors each of which is a copy of $K_n$.
\end{lemma}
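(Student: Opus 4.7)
The plan is to assume, for contradiction, that some edge $v_1v_2$ of $G$ has $w_G(v_1v_2)\geq h$, and then to construct a multicolored copy of $H$ inside $G$, contradicting the multicolored-$H$-freeness of $G$.

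Since the hypothesis $w_H\geq 1$ on every pair forces $h\geq\binom{r}{2}$, set $t:=\binom{r}{2}$ and $s:=\lfloor r/2\rfloor$, so that the hypothesis reads $k\leq h+s-1$. Starting from $\{v_1,v_2\}$ and applying Lemma~\ref{le:vertices} iteratively with density parameter $d=h-1$ (supplied by the minimum-degree assumption), I would extract vertices $v_3,\dots,v_r\in V(G)$ satisfying $\sum_{j<i}w_G(v_iv_j)\geq (h-1)(i-1)$ for each $3\leq i\leq r$. The induced multigraph $F:=G[\{v_1,\dots,v_r\}]$ then enjoys three key properties: (a)~$w_F(v_1v_2)\geq h$; (b)~$\sum_{j<i}w_F(v_iv_j)\geq (h-1)(i-1)$ for every $i\geq 3$; and (c)~every $w_F\leq k\leq h+s-1$.

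The heart of the proof is the following sorted-multiplicity estimate: if $b_1\leq b_2\leq\cdots\leq b_t$ denote the multiplicities of the $t$ edges of $F$ in nondecreasing order, then
\[
b_i\geq h-t+i\quad\text{for every } i\in[t]. \qquad (\ast)
\]
To prove $(\ast)$, I would exploit the ``level'' structure of $F$, where the level-$i$ edges are $\{v_iv_j:j<i\}$. At each level $i\geq 3$, combining the sum bound (b) with the uniform cap (c) shows that at most $\lfloor(i-1)s/(k-c)\rfloor$ of its $i-1$ edges can have multiplicity at most $c$. Since $v_1v_2$ is not counted when $c<h$, summing these per-level counts across $i=3,\dots,r$ and choosing $c=h-t+i-1$ (so that $k-c=s+t-i$) reduces $(\ast)$ to the arithmetic inequality $\sum_{j=2}^{r-1}\lfloor js/(s+t-i)\rfloor \leq i-1$. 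For $i\geq s+1$ this follows from the averaging bound $\sum\leq s(t-1)/(s+t-i)$ via the identity $(i-1)(s+t-i)-s(t-1)=(t-i)(i-1-s)\geq 0$. The range $i\leq s$ has to be handled via the floors, and the base case $i=1$ reduces to the elementary inequality $\binom{r}{2}-2\geq (r-2)\lfloor r/2\rfloor$, easily verified for $r\geq 3$, which already forces every per-level minimum $(h-1)-(i-2)s$ to be at least $h-t+1$.

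With $(\ast)$ in hand, both conditions of Lemma~\ref{le:Hall} come for free. Any vertex bijection $\phi\colon V(H)\to V(F)$ is an embedding, because each edge $f$ of $H$ satisfies $w_H(f)\leq h-(t-1)=h-t+1\leq b_1\leq w_F(\phi(f))$ by $(\ast)$ at $i=1$. Enumerating the edges of $H$ as $(f_1,\dots,f_t)$ in ascending order of $w_F(\phi(f_i))$---which equals $b_i$ since $\phi$ is a bijection on edges---one has
\[
\sum_{\ell=1}^{i}w_H(f_\ell) = h-\sum_{\ell>i}w_H(f_\ell) \leq h-(t-i) = h-t+i \leq b_i = w_F(\phi(f_i))
\]
for every $i$, using only $w_H\geq 1$ and $(\ast)$. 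Lemma~\ref{le:Hall} then produces a multicolored copy of $H$ inside $F\subseteq G$, yielding the desired contradiction.

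The main obstacle is precisely the verification of $(\ast)$: the per-level sum constraints (b) have to be balanced carefully against the uniform cap (c), and the resulting arithmetic inequality is tight exactly at the threshold $k=h+\lfloor r/2\rfloor-1$ imposed by the hypothesis---this is the only place where the specific value $\lfloor r/2\rfloor$ enters the argument. Once $(\ast)$ is secured, the embedding and Hall steps collapse into the single chain of inequalities displayed above.
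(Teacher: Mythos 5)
Your plan actually targets Lemma~\ref{le:ksmall_r_vertex-degree}, not Lemma~\ref{le:ksmall_r_vertex}. The statement you are asked to prove has \emph{no} minimum-degree hypothesis on $G$; it is an extremal determination of $\ex_k(n,H)$ together with a uniqueness claim. Yet your very first step invokes Lemma~\ref{le:vertices} ``with density parameter $d=h-1$ (supplied by the minimum-degree assumption),'' and no such assumption is available. Everything you then do --- assume some edge has multiplicity $\geq h$, build $v_3,\dots,v_r$, bound the sorted multiplicities, and apply Hall --- is the content of Lemma~\ref{le:ksmall_r_vertex-degree}. The paper deliberately separates the two: it proves only Lemma~\ref{le:ksmall_r_vertex-degree} in detail and states that Lemma~\ref{le:ksmall_r_vertex} follows from it ``and a deleting argument,'' i.e.\ one must first show that a putative extremal multigraph with $e(G)\geq(h-1)\binom{n}{2}$ can be reduced, by iteratively deleting low-degree vertices, to one satisfying the minimum-degree hypothesis, and then derive the count and the uniqueness of the colored structure $(h-1)\cdot K_n$. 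Your proposal contains none of that reduction, and it never addresses the uniqueness conclusion at all (that equality forces \emph{exactly} $h-1$ nonempty colors, each equal to $K_n$ --- a claim about the coloring, not merely the multiplicity function).

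Taken purely as an alternative write-up of the Lemma~\ref{le:ksmall_r_vertex-degree} step, your global ``sorted multiplicities plus averaging'' scheme is a genuinely different bookkeeping from the paper's (which orders the edges $e_{i,\ell}$ into blocks of decreasing size and verifies $w_G(e_j)\geq h-(j-1)$ edge by edge via the estimate $w_G(e_{i,\ell})\geq h-\tfrac{1}{i-\ell}\bigl(i-1+(\lfloor r/2\rfloor-1)(\ell-1)\bigr)$). Your averaging bound $q(t-1)/(q+t-i)\leq i-1$ with $q:=k-h+1\leq s$ only holds outright when $q\leq i-1$, so it covers $i\geq s+1$; you acknowledge that $2\leq i\leq s$ ``has to be handled via the floors'' but never do it. It so happens that the floor saves the day there --- for $i\leq s$ one has $s+t-i\geq t\geq (\ell-1)s$, so every per-level floor is $0$ except possibly a single $1$ when $r$ is even and $i=s$ --- but as written this is left as an unproved assertion, and in a proof aimed at a tight threshold ($k=h+\lfloor r/2\rfloor-1$) that detail cannot be waved away.
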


\begin{lemma}\label{le:ksmall_r_vertex-stability}
Let $r\geq 3$, $h\leq k\leq h+\big\lfloor\frac{r}{2}\big\rfloor-1$ and $0<\frac{1}{n}\ll \eta \ll \varepsilon, \frac{1}{k}<1$.
Let $H$ be an $r$-vertex $h$-edge multigraph with $w_H(e)\geq 1$ for each $e\in {V(H)\choose 2}$.
Let $G$ be an $n$-vertex simply $k$-nested-colored multicolored-$H$-free multigraph with $e(G)\geq (h-1){n\choose 2}-\eta n^2$.
Then $|G\triangle ((h-1)\cdot K_n)|\leq \varepsilon n^2$.
\end{lemma}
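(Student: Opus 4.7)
The plan is to follow the template of Lemma~\ref{le:r_vertex-stability}, with $(h-1)\cdot K_n$ replacing $k\cdot T_{r-1}(n)$ as the extremal target and Lemma~\ref{le:ksmall_r_vertex-degree} replacing Lemma~\ref{le:r_vertex-degree} as the structural input. Pick an auxiliary constant $\delta$ with $\eta\ll\delta\ll\varepsilon/k,1/k^2$, and set $d\colonequals e(G)/\binom{n}{2}$, so that $d\geq h-1-3\eta$. Define the low-degree set $L\colonequals\{v\in V(G)\colon d_G(v)<(1-\delta)dn\}$. If $|L|\geq\delta n/2$, choose $B\subseteq L$ with $|B|=\delta n/2$; then Observation~\ref{ob:G-B} gives $e(G-B)\geq(1+\delta^2/2)d\binom{|V(G-B)|}{2}$, and the hierarchy $\eta\ll\delta^2$ ensures $(1+\delta^2/2)d>h-1$. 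Since $G-B$ remains multicolored-$H$-free, this contradicts Lemma~\ref{le:ksmall_r_vertex}. Hence $|L|<\delta n/2$.

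Set $M\colonequals G-L$. A short estimate yields $\delta(M)\geq(1-\delta)dn-k|L|\geq(1-\delta^{1/2})(h-1)(|V(M)|-1)$, where the second inequality uses $\delta^{1/2}\gg\delta,\eta$ and $\delta^{1/2}\ll 1/k$. Applying Lemma~\ref{le:ksmall_r_vertex-degree} to $M$ with $\delta^{1/2}$ playing the role of the tolerance parameter then yields $w_M(e)\leq h-1$ for every $e\in\binom{V(M)}{2}$. In particular, the only edges of $G$ whose multiplicity exceeds $h-1$ are incident to a vertex of $L$.

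Finally, decompose $|G\triangle((h-1)\cdot K_n)|=P+N$, where $P=\sum_e(w_G(e)-(h-1))_+$ is the total excess and $N=\sum_e((h-1)-w_G(e))_+$ is the total deficit. By the previous paragraph every contribution to $P$ comes from an edge incident to $L$, so $P\leq k|L|n\leq k\delta n^2/2$. The deficit splits analogously: edges incident to $L$ contribute at most $(h-1)|L|n\leq h\delta n^2/2$, while edges inside $M$ contribute $(h-1)\binom{|V(M)|}{2}-e(M)\leq(h-1)\binom{n}{2}-(e(G)-k|L|n)\leq\eta n^2+k\delta n^2/2$. Summing the three pieces gives $|G\triangle((h-1)\cdot K_n)|\leq\eta n^2+O(k\delta)n^2\leq\varepsilon n^2$, as required.

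The main obstacle is the first step: one must verify that the density loss $h-1-d$ is dominated by the supersaturation gain $\delta^2(h-1)/2$, which is exactly what enforces the hierarchy $\eta\ll\delta^2\ll\varepsilon/k$. Once $|L|$ is under control, Lemma~\ref{le:ksmall_r_vertex-degree} supplies the hard structural input on $M$, and everything that follows is straightforward bookkeeping.
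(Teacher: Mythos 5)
Your proposal is correct and is exactly the ``routine application of the stability argument'' that the authors declare (and omit) for this lemma: you mirror the proof of Lemma~\ref{le:r_vertex-stability}, with Lemma~\ref{le:ksmall_r_vertex} playing the role of Theorem~\ref{th:r_vertex} in the low-degree deletion step, and Lemma~\ref{le:ksmall_r_vertex-degree} replacing Lemma~\ref{le:r_vertex-degree} as the structural input on $M=G-L$. The bookkeeping at the end is in fact simpler than in the $(r-1)$-partite case, since $(h-1)\cdot K_n$ is vertex-transitive and the structural conclusion ($w_M(e)\le h-1$ on $M$) translates immediately into ``excess lives only on edges meeting $L$''; your split into excess and deficit and the three bounds are all sound. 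You correctly flagged the two places where the hierarchy matters: the supersaturation step needs $\eta\ll\delta^2$, and invoking Lemma~\ref{le:ksmall_r_vertex-degree} on $M$ with tolerance $\delta^{1/2}$ needs $\delta^{1/2}\ll 1/k$, i.e.\ $\delta\ll 1/k^2$, both of which your chosen hierarchy accommodates.
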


\begin{lemma}\label{le:ksmall_any_vertex-stability}
Let $r\geq 3$, $h\leq k\leq h+\big\lfloor\frac{r}{2}\big\rfloor-1$ and $0<\frac{1}{n}\ll \eta \ll \mu, \frac{1}{k}<1$.
Let $H$ be an $h$-edge graph with $\chi(H)=r$.
Let $G$ be an $n$-vertex simply $k$-nested-colored multicolored-$H$-free multigraph with $e(G)\geq (h-1){n\choose 2}-\eta n^2$.
Then $|G\triangle ((h-1)\cdot K_n)|\leq \mu n^2$.
\end{lemma}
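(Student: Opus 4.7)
The plan is to follow the same regularity-lemma template as the proof of Lemma~\ref{le:any_vertex-stability}, with the extremal target replaced by $(h-1)\cdot K_n$ and the invoked $r$-vertex stability replaced by Lemma~\ref{le:ksmall_r_vertex-stability}. The only genuinely new ingredient is an analogue of the color-reduced multigraph $H_c$ for a graph $H$ that is not assumed to be color-critical. To construct it, I would fix any proper $r$-coloring of $H$ with color classes $V_1,\ldots,V_r$ and set $w_{H_c}(ij) = e_H(V_i,V_j)$ for each $ij \in \binom{[r]}{2}$. If some pair $V_i,V_j$ had no edge between them, then $V_i \cup V_j$ would be independent and merging them would produce a proper $(r-1)$-coloring, contradicting $\chi(H) = r$. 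Therefore $w_{H_c}(e) \geq 1$ for every $e$, and $H_c$ is an $r$-vertex, $h$-edge multigraph satisfying the hypotheses of Lemma~\ref{le:ksmall_r_vertex-stability}.

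Next, I would choose constants $\tfrac{1}{n} \ll \varepsilon \ll \gamma \ll \eta \ll \eta' \ll \mu, \tfrac{1}{k}$, with $\varepsilon^{-1}$ at least the threshold of Lemma~\ref{le:ksmall_r_vertex-stability} at tolerance $\eta'$. Apply the Multicolor Regularity Lemma (Lemma~\ref{le:regulartity}) to $G$ with parameters $\varepsilon$, $k$, and $M_0 = \varepsilon^{-1}$ to obtain an $\varepsilon$-regular partition $\mathcal{P} = (V_1,\ldots,V_m)$ with $\varepsilon^{-1} \leq m \leq M$, reduced multigraph $R$, and associated blow-up $G^{\mathcal{P}} = G^{\mathcal{P}}(\varepsilon,\gamma)$. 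Both $R$ and $G^{\mathcal{P}}$ inherit the nested property of $G$. Any multicolored copy of $H_c$ in $R$ would produce, via the complete bipartite blow-up structure of $G^{\mathcal{P}}$, a multicolored copy of $H$ in $G^{\mathcal{P}}$: embed each color class of $H$ into the corresponding part, noting that each multicolored edge $i_j i_{j'}$ of $H_c$ supplies precisely $e_H(V_j,V_{j'})$ distinct colors on its complete bipartite blow-up, which is exactly what is needed for the $H$-edges between those classes. By Lemma~\ref{le:embedding}, such a multicolored copy of $H$ in $G^{\mathcal{P}}$ lifts to a multicolored copy in $G$, contradicting the hypothesis. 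Hence $R$ must be multicolored-$H_c$-free.

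The density bound $d(R) \geq d(G) - 2(\varepsilon+\gamma)$ from the moreover part of Lemma~\ref{le:regulartity} gives $e(R) \geq (h-1)\binom{m}{2} - 2\eta m^2$, so Lemma~\ref{le:ksmall_r_vertex-stability} yields $|R \triangle ((h-1)\cdot K_m)| \leq \eta' m^2$. Blowing up introduces an extra $O(n^2/m)$ discrepancy, giving $|G^{\mathcal{P}} \triangle ((h-1)\cdot K_n)| \leq 2\eta' n^2$, hence $e(G^{\mathcal{P}}) \leq e(G) + O(\eta')n^2$. The definition of $G^{\mathcal{P}}$ furnishes an edge set of size at most $k(\varepsilon + \tfrac{1}{2m} + \gamma)n^2 \leq \eta' n^2$ outside $E(G^{\mathcal{P}})$, from which $|G \triangle G^{\mathcal{P}}| \leq O(\eta') n^2$ follows, and the triangle inequality then gives $|G \triangle ((h-1)\cdot K_n)| \leq \mu n^2$. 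The sole conceptual obstacle is the construction of $H_c$ in the absence of a critical coloring, handled by the chromatic-number merging argument in the first paragraph; the remainder is a direct transcription of the proof of Lemma~\ref{le:any_vertex-stability}.
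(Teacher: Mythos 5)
Your proposal is correct and follows exactly the approach the paper indicates: the authors state that the proof of Lemma~\ref{le:ksmall_any_vertex-stability} is a ``routine application of the stability argument introduced in [CKLLS]'' and omit it, and your argument is indeed the direct transcription of the proof of Lemma~\ref{le:any_vertex-stability}, with $(h-1)\cdot K_n$ replacing $k\cdot T_{r-1}(n)$ and Lemma~\ref{le:ksmall_r_vertex-stability} replacing Lemma~\ref{le:r_vertex-stability}. Your one genuinely new observation --- that for a non-color-critical $H$ with $\chi(H)=r$, any proper $r$-coloring yields a reduced multigraph $H_c$ with $w_{H_c}(e)\geq 1$ everywhere (since an edgeless pair of classes could be merged) --- is exactly the right replacement for the critical-coloring-based $H_c$ of Definition~\ref{def:H-friendly}, and the embedding of a multicolored $H$ into $G^{\mathcal P}$ from a multicolored $H_c$ in $R$ goes through unchanged because the $w_{H_c}(ij)=e_H(V_i,V_j)$ distinct colors on each lower-regular pair are precisely enough to color the $H$-edges between classes $V_i$ and $V_j$.
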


\begin{proof}[Proof of Lemma~\ref{le:ksmall_r_vertex-degree}]
Suppose for a contradiction that $G$ contains an edge $v_1v_2$ of multiplicity at least $h$.
Applying Lemma~\ref{le:vertices} iteratively, we can find vertices $v_{3}, \ldots, v_{r}\in V(G)\setminus \{v_1, v_2\}$ such that for each $3 \leq i \leq r$, we have $\sum_{j\in [i-1]} w_G(v_iv_j)\geq (h-1)(i-1).$
For each $3\leq i\leq r$, we denote the edges in $\{v_iv_{\ell}\colon\, 1\leq \ell \leq i-1\}$ by $e_{i,1}, \ldots, e_{i,i-1}$ with $w_G(e_{i,1})\geq \cdots \geq w_G(e_{i,i-1})$.
Then for each $3\leq i\leq r$ and $1\leq \ell \leq i-1$, we have
\begin{align*}
w_G(e_{i, \ell})\geq &~\frac{1}{(i-1)-(\ell-1)}\left(\sum\nolimits_{j\in [i-1]} w_G(v_iv_j)-(\ell-1)k\right) \\
 \geq &~\frac{1}{i-\ell}\left((h-1)(i-1)-(\ell-1)\left(h+\left\lfloor\frac{r}{2}\right\rfloor-1\right)\right) \\
 = &~h-\frac{1}{i-\ell}\left(i-1+\left(\left\lfloor\frac{r}{2}\right\rfloor-1\right)(\ell-1)\right).
\end{align*}
In particular, we have $w_G(e_{i,1})\geq h-1$ for all $3\leq i\leq r$.

We use $e_1, e_2, \ldots, e_{{r\choose 2}}$ to denote the edges
$$v_1v_2, ~ e_{r,1}, \ldots, e_{3,1}, ~ e_{r,2}, \ldots, e_{3,2}, ~ e_{r,3}, \ldots, e_{4,3}, ~ \ldots, ~ e_{r,r-2}, e_{r-1,r-2}, ~ e_{r,r-1},$$
respectively.
We shall show that $w_{G}(e_j)\geq h-(j-1)$ for all $1\leq j\leq {r\choose 2}$.
This holds clearly for $1\leq j\leq r-1$ since $e_1=v_1v_2$ and $w_G(e_{i,1})\geq h-1$ for all $3\leq i\leq r$.
Next we consider $r\leq j\leq {r\choose 2}$, that is, we consider $e_{i,\ell}$ with $3\leq i\leq r$ and $2\leq \ell \leq i-1$.
Now $e_{i,\ell} = e_j$ with $j=r-1+\sum_{t=2}^{\ell} (r-t) - (i-\ell-1)=r-i+\ell+\frac{1}{2}(\ell-1)(2r-\ell-2)$.
It suffices to show that $h-\big(r-i+\ell+\frac{1}{2}(\ell-1)(2r-\ell-2)-1\big)\leq h-\frac{1}{i-\ell}\left(i-1+\left(\left\lfloor\frac{r}{2}\right\rfloor-1\right)(\ell-1)\right).$
If $2\leq \ell \leq i-2$, then
\begin{align*}
~ &~r-i+\ell+\frac{1}{2}(\ell-1)(2r-\ell-2)-1-\frac{1}{i-\ell}\left(i-1+\left(\left\lfloor\frac{r}{2}\right\rfloor-1\right)(\ell-1)\right) \\
\geq &~r-i+\ell+\frac{1}{2}(\ell-1)(2r-\ell-2)-1-\frac{1}{2}\left(i-1+\left(\frac{r}{2}-1\right)(\ell-1)\right) \\
= &~\frac{1}{2}(\ell-1)\left(\frac{3}{2}r-\ell-1\right) + r-\frac{3}{2}i+\ell-\frac{1}{2} \\
\geq &~\frac{1}{2}\left(\frac{3}{2}r-\ell-1\right) + r-\frac{3}{2}i+\ell-\frac{1}{2} ~= \frac{7}{4}r-\frac{3}{2}i +\frac{\ell}{2}-1 ~> 0.
\end{align*}
If $\ell=i-1$, then
\begin{align*}
~ &~r-i+\ell+\frac{1}{2}(\ell-1)(2r-\ell-2)-1-\frac{1}{i-\ell}\left(i-1+\left(\left\lfloor\frac{r}{2}\right\rfloor-1\right)(\ell-1)\right) \\
= &~r-i+\ell+\frac{1}{2}(\ell-1)(2r-\ell-2)-1-\left(i-1+\left(\left\lfloor\frac{r}{2}\right\rfloor-1\right)(\ell-1)\right) \\
= &~(\ell-1)\left(\left\lceil\frac{r}{2}\right\rceil-\frac{\ell}{2}\right)+r-2i+\ell ~= (i-2)\left(\left\lceil\frac{r}{2}\right\rceil-\frac{i-1}{2}\right)+r-i-1\\
\geq &~(i-2)\left(\left\lceil\frac{r}{2}\right\rceil-\frac{r-1}{2}\right)+r-i-1 ~\geq 0.
\end{align*}
Thus $w_{G}(e_j)\geq h-(j-1)$ for all $1\leq j\leq {r\choose 2}$.
For each $1\leq j\leq {r\choose 2}$, let $e_j'\colonequals e_{{r\choose 2}+1-j}$, so $w_{G}(e_j')\geq h-({r\choose 2}-j)$.
Since $w_H(e)\geq 1$ for each $e\in {V(H)\choose 2}$, we can deduce that for any $E\subseteq {V(H)\choose 2}$ with $|E|=j$, we have $\sum_{e\in E}w_H(e)\leq h- ({r\choose 2}-j)\leq w_{G}(e_j')$.
Therefore, $G[\{v_1, v_2, \ldots, v_r\}]$ contains a multicolored copy of $H$ by Lemma~\ref{le:Hall}, a contradiction.
This completes the proof of Lemma~\ref{le:ksmall_r_vertex-degree}.
\end{proof}

\vspace{0.5cm}

\noindent{\it E-mail address}: xiheli@snnu.edu.cn ~~(X. Li)
\vspace{0.1cm}

\noindent{\it E-mail address}: jiema@ustc.edu.cn ~~(J. Ma)
\vspace{0.1cm}

\noindent{\it E-mail address}: lhotse@mail.ustc.edu.cn ~~(Z. Zheng)

\end{document}